\documentclass[9pt,twocolumn,twoside]{article}
\usepackage[paperwidth=8.5in,paperheight=11in,left=0.65in,right=0.65in,top=0.75in,bottom=1in]{geometry}
\usepackage[numbers]{natbib}
\usepackage{amssymb}
\usepackage{hyperref}
\usepackage{xcolor}
\usepackage{authblk}
\usepackage{placeins}

\usepackage{amsthm}
\usepackage{amsmath}             
\usepackage{mathrsfs}            
\usepackage{bbm}                 
\usepackage{mathtools}           
\usepackage{algorithm,algpseudocode}  
\usepackage{nicefrac}            
\usepackage{graphicx}            
\usepackage{verbatim}            
\usepackage{subcaption}          
\usepackage{diagbox}             
\usepackage{booktabs,colortbl}   
\usepackage{cleveref}            
\usepackage{tikz-cd}             
\usepackage{enumitem}            
\usepackage{xparse}              
\usepackage{todonotes}
\usepackage{pgffor}
\usepackage{adjustbox}
\usepackage{longtable}
\usepackage{array}
\graphicspath{{figures/}}

\numberwithin{equation}{section}

\setlist[itemize]{label=\tiny\textbullet}
\setlist[enumerate]{label=\normalfont(\roman*)}

\definecolor{FTChampagnePink}{HTML}{F2DFCE}

\crefname{equation}{equation}{equations}
\crefname{figure}{figure}{figures}
\crefname{section}{section}{sections}
\crefname{subsection}{section}{sections}
\crefname{lem}{lemma}{lemmas}
\crefname{ex}{example}{examples}

\tikzset{
  symbol/.style={
    draw=none,
    every to/.append style={
      edge node={node [sloped, allow upside down, auto=false]{$#1$}}
    }
  }
}

\newtheorem{theorem}{Theorem}            
\newtheorem{lemma}{Lemma}            

\newtheorem{definition}{Definition}  
\newtheorem{example}{Example}        
\newtheorem{remark}{Remark}          

\NewDocumentCommand{\CF}{O{\R} O{}}{%
  \operatorname{CF}_{#2}(#1)%
}

\newcommand{\deff}[1]{\textbf{#1}}
\def\Z{{\mathbb Z}}    
\def\R{{\mathbb R}}    
\def\Q{{\mathbb Q}}    
\def\d{\,\mathrm{d}}   

\def\1{\mathbf{1}}      
\renewcommand{\phi}{\varphi}

\def\cf{\text{CF}}

\def\Int{\text{Int}}

\def\ect{\text{ECT}}
\def\samp{\text{SampEuler}}
\def\im{\text{Im}}
\def\id{\text{Id}}

\NewDocumentCommand{\transform}{s O{\phi} d<> O{\kappa}}{%
  \IfBooleanTF{#1}{
    \operatorname{T}_{#4}%
  }{
    \operatorname{T}_{#4}\left[#2\right]%
    \IfNoValueTF{#3}{}{\left(#3\right)}%
  }%
}

\newcolumntype{L}[1]{>{\raggedright\arraybackslash}p{#1}}

\newcommand{\beginsupplement}{%
  \setcounter{table}{0}
  \renewcommand{\thetable}{S\arabic{table}}%
  \setcounter{figure}{0}
  \renewcommand{\thefigure}{S\arabic{figure}}%
  \setcounter{equation}{0}
  \renewcommand{\theequation}{S\arabic{equation}}%
  \setcounter{theorem}{0}
  \renewcommand{\thetheorem}{S\arabic{theorem}}%
  \setcounter{definition}{0}
  \renewcommand{\thedefinition}{S\arabic{definition}}%
  \setcounter{lemma}{0}
  \renewcommand{\thelemma}{S\arabic{lemma}}%
}

\title{Topological shape transform for thymus structures}

\author[1,2,3,4]{Haochen Yang}
\author[6]{Vadim Lebovici}
\author[7]{Andreas Tarcevski}
\author[7,9]{Liliana Tchernev}
\author[8]{Saulius Zuklys}
\author[7,8,9,10]{Georg A. Holl\"{a}nder\thanks{To whom correspondence should be addressed. E-mail: georg.hollander@paediatrics.ox.ac.uk}}
\author[1,2]{Helen M. Byrne\thanks{Corresponding author. E-mail: helen.byrne@maths.ox.ac.uk}}
\author[1,3,4,5]{Heather A. Harrington\thanks{Corresponding author. E-mail: Harrington@mpi-cbg.de}}

\affil[1]{Mathematical Institute, University of Oxford, Oxford OX2 6GG, United Kingdom}
\affil[2]{Ludwig Institute for Cancer Research, Nuffield Department of Medicine, Oxford OX3 7DQ, United Kingdom}
\affil[3]{Centre for Systems Biology Dresden, Dresden 01307, Germany}
\affil[4]{Max Planck Institute of Molecular Cell Biology and Genetics, Dresden 01307, Germany}
\affil[5]{Technische Universit\"{a}t Dresden, Dresden 01062, Germany}
\affil[6]{Sorbonne University, Institut de Math\'{e}matiques de Jussieu-Paris Rive Gauche, Paris 75005, France}
\affil[7]{Department of Paediatrics, University of Oxford, Oxford OX3 9DU, United Kingdom}
\affil[8]{Paediatric Immunology, Department of Biomedicine, University of Basel, Basel 4058, Switzerland}
\affil[9]{Botnar Institute of Immune Engineering, Basel 4052, Switzerland}
\affil[10]{Department of Biosystems Science and Engineering, ETH Zurich, Basel 4056, Switzerland}

\date{}

\begin{document}

\maketitle

\begin{abstract}
The Euler characteristic transform (ECT) is an emerging and powerful framework within topological data analysis for quantifying the geometry of shape. The applicability of ECT has been limited due to its sensitivity to noisy data. Here, we introduce SampEuler, a novel ECT-based shape descriptor designed to achieve enhanced robustness to perturbations. We provide a theoretical analysis establishing the stability of SampEuler and validate these properties empirically through pairwise similarity analyses on a benchmark dataset and showcase it on a thymus dataset. The thymus is a primary lymphoid organ that is essential for the maturation and selection of self-tolerant T cells, and within the thymus, thymic epithelial cells are organized in complex three-dimensional architectures, yet the principles governing their formation, functional organization, and remodeling during age-related involution remain poorly understood. Addressing these questions requires robust and informative shape descriptors capable of capturing architectural changes across developmental stages. We introduce and apply SampEuler to a newly generated two-dimensional imaging dataset of mouse thymi spanning young adulthood and advanced age groups, where SampEuler outperforms both persistent homology–based methods and deep learning models in detecting localized morphological differences associated with aging. To facilitate interpretation, we develop a vectorization and visualization framework for SampEuler, which preserves rich morphological information and enables identification of structural features that distinguish thymi across age groups. Collectively, our results demonstrate that SampEuler provides a robust and interpretable approach for quantifying thymic architecture, revealing age-dependent structural changes and providing new insights into thymic involution.
\end{abstract}

\noindent\textbf{Keywords:} topological data analysis $|$ Euler characteristic transform $|$ thymus $|$ involution $|$ geometric morphometrics


\section*{Introduction}

Morphological information is fundamental to understanding how biological structure relates to function across tissues, organs, and organisms~\cite{betge2022drug, rezzani2014thymus, freemont2007morphology}. Therefore, quantifying geometric variation can provide valuable insights into developmental processes, disease states, and biological function~\cite{hale2024cellular}. The thymus is an essential organ of the adaptive immune system responsible for the maturation, selection, and differentiation of T cells \cite{immungen}. Within the confines of the thymic microenvironment, T cells are instructed by haematopoietic antigen presenting cells (APCs), thymic epithelial cells (TECs) and other stromal cells to establish a diverse, yet self-tolerant T cell receptor (TCR) repertoire - a process known as thymopoiesis \cite{gillThymicGenerationRegeneration2003}.  With age, the thymus undergoes involution characterized by reduced total cellularity, disruption of the epithelial network architecture, and diminished naive T cell output \cite{goronzyImmuneAgingAutoimmunity2012, elife}. Paralleling thymic involution and reduced thymopoietic activity, the TCR repertoire becomes constricted with age \cite{nikolich2018twilight}. Together, this leads to an increased susceptibility to viral infections, due to the immune system’s diminishing capacity to mount effective responses to de-novo antigens, heightened autoimmunity, and diminished surveillance of malignant cells~\cite{liang2022age, dengSinglecellAnalysisHuman2025, elyahu2021thymus}. Simultaneously, with age, the thymus undergoes significant architectural remodeling, impacting both cortical TECs (cTECs) and medullary TECs (mTECs) exhibiting structural changes \cite{rezzani2014thymus, venablesDynamicChangesEpithelial2019a}. To better understand these structural changes, precise mathematical descriptors are needed to quantify morphological alterations at the cellular level. Previous image-based studies have quantified thymic architecture through separate measurements, such as the cortico-medullary ratio and medullary volume~\cite{irla2013three}, the perivascular-space fraction~\cite{flores1999analysis}, interfacial area and branching~\cite{lagou2026morphometric}, and single-cell shape~\cite{venablesDynamicChangesEpithelial2019a}. Measured independently, these indices each quantify only one aspect of morphology and may vary independently across samples, motivating a single descriptor that captures overall shape change. 
%

Recent advances in topological data analysis (TDA), a field dedicated to extracting geometric and topological insights from spatial data, have led to the introduction of persistent homology \cite{edelsbrunner2002topological, cohen2005stability} (tracking how holes and loops in a shape appear and vanish across scales) and to the development of the Euler Characteristic Transform (ECT) \cite{turner2014persistent} (a descriptor that scans a shape from many directions). Extensions to ECT and refined transforms have been devised to capture aspects of shape analysis \cite{crawford2020predicting, kirveslahti2024digital, roell2023differentiable}, and theoretical studies have established important properties of the ECT, including its invertibility \cite{schapira1995tomography,curry2022many, ghrist2018persistent} and the existence of stability bounds \cite{skraba2020wasserstein, marsh2023stability} (small changes in a shape cause only small changes in its descriptor). The invertibility of ECT means that it captures all geometric and topological information of the input shape and is highly effective at capturing morphological differences between shapes. Despite the stability results, the ECT remains extremely sensitive to small perturbations and misalignments in the input shape, undermining its practical robustness. Although several alignment procedures have been proposed to address these issues \cite{meng2024randomness, wang2021statistical, kirveslahti2024digital}, they often come at a high computational cost. Recent theoretical advances \cite{curry2022many} have effectively resolved the sensitivity to misalignments, but the isometry invariant (unchanged by rotation, reflection, or translation) algorithms they have inspired \cite{marsh2022detecting} suffer from unsatisfactory levels of information loss.

Motivated by the limitations of existing isometry invariant algorithms, we introduce SampEuler, a discrete construction based on the ECT, which comes with theoretical guarantees about the information it captures and its stability. Building on SampEuler, we introduce a vectorization and visualization algorithm for the ECT. We first apply SampEuler and its vectorization to synthetic tree structures and show that they outperform current topological shape transforms. We then benchmark against conventional shape analysis methods and TDA methods using the MPEG7 dataset.

Finally, we apply SampEuler to a segmentation masks of the thymic epithelial scaffold covering different age groups, to characterize age-related structural changes. To the best of our knowledge, this is the first complete, quantitative characterisation of how the geometry and topology of the thymic epithelial network change between young and old thymi; because one descriptor captures the full morphological change at once, rather than separate indices tracked one by one, it also lets us study how this change co-varies with shifts in cellular composition across the life trajectory. In our classification study, we benchmark SampEuler against state-of-the-art TDA methods, including various vectorizations of persistent homology, as well as deep learning methods, demonstrating that SampEuler achieves higher classification accuracy in significantly less computation time. We interpret the classification results based on the vectorization of SampEuler in relation to underlying thymic structural differences. In particular, we relate these shape changes to the spatial distribution of T cells.

\section*{Description of Datasets}
We evaluate SampEuler on three datasets: synthetic structures for the method 
validation, established shape benchmarks for comparison with existing 
approaches, and mouse thymus images for biological applications.

\subsection*{Synthetic Networks}
 We produce simple geometric simplicial complex examples as networks consisting of three edges of fixed lengths emanating from a common origin that are uniformly discretized into points connected by edges in order. We perturb the points and the edges that connect them by adding Gaussian noise. This creates a controlled setting within which to evaluate the performance of ECT-based methods under perfect alignment versus rotational perturbations. Two representations of two groups of simplices are shown in \Cref{fig: toy example}\textit{B}.  Details of the dataset can be found in the \textit{SI Appendix}.

\subsection*{MPEG-7 Shape Library}
We use the published image dataset from previous shape descriptor studies \cite{reininghaus2015stable, kusano2016persistence, carriere2017sliced, anirudh2016riemannian, le2018persistence, van2025discrete, turner2014persistent, marsh2022detecting, bai2009integrating, wang2014bag, adams2017persistence, hofer2019learning}. The dataset consists of 70 groups of 2D binary images, each group containing silhouettes of objects from the same category. For example, one group contains 20 images of horse silhouettes. Although all the underlying objects are horses and share similar geometry, they are not identical; in many cases, a 2D isometry—such as a slight rotation—is applied to the object before the image is captured so that the same side of the horse is photographed from different angles.

We preprocess the dataset by first padding each image with black pixels to achieve a consistent size of $1126$ pixels $\times1126$ pixels. Next, we denoise the images by retaining only the largest connected component of white pixels—treating any additional white pixels as false positives and converting them to black. The processed images are then used for further analysis. The preprocessing details are included in the \textit{SI Appendix}.
\subsection*{Mouse Thymus Dataset}

Mouse thymi (n=7) were processed as described in 'Materials and Methods' (see below). The boundaries of the thymic epithelial cell (TEC) network were segmented using MATLAB's Image Processing Toolbox: multilevel thresholding 
(\texttt{multithresh}) and skeletonization (\texttt{bwskel}) were applied 
to the cytokeratin 8 (K8) and cytokeratin 14 (K14) TEC markers to generate 2D binary images for all 7 thymi and both markers (values of 1 where the K8/K14 staining is present, 0 where it is absent). A 200 x 200 pixel lattice was superimposed on the 2D images to generate subimages, for subsequent TDA.

\section*{Results}

We introduce SampEuler to capture shape information in a way that is robust to noise and overcomes the sensitivity of ECT (\Cref{fig: toy example}). We show that the proposed approach achieves higher accuracy with less 
computational cost than existing methods, including persistent homology 
vectorizations, ECT-based approaches, and deep learning models (\Cref{fig:main_2}). Employing SampEuler on the images of mouse thymi, we achieve age classification based on the shape of local thymic structures (\Cref{fig:main_3}). Comparing with the distribution of T cells, we identify relationships between thymic structure and cellular composition at different ages (\Cref{fig:main_4}).

\subsection*{From Data to Topological Transforms} 

Within TDA, the ECT is a computable shape descriptor that has been extended in numerous ways and successfully applied to biological datasets \cite{crawford2020predicting, roell2023differentiable, wang2021statistical, wang2022gpu, kirveslahti2024digital}. Such data comes in the form of $n$-dimensional images or discretized boundaries of segmented structures; these can be represented as simplicial complexes comprising points, edges, faces, and higher-dimensional simplices. One of the simplest computable topological invariants, the Euler characteristic, is given by the alternating sum of the number of $n$-dimensional simplices in the simplicial complex $K$: $\chi(K) = \sum_{i=0}^n (-1)^i \#\text{ of $i$-dimensional simplices in }K,$ where $n$ is the maximal dimension of all simplices in $K$. To capture geometric information of the complex, Turner et al. proposed sweeping a (hyper)plane through the shape progressively in time ($t$) in a given direction ($v$) and computing the Euler characteristic ($\chi$), which outputs an integer-valued curve for the particular direction (\Cref{ECC_def}). By sweeping through all directions, the Euler characteristic transform (ECT) is the correspondence between directions and curves (\Cref{ECT_def}). The ECT is invertible, meaning one can reconstruct the original shape and thereby derive any geometric information captured by other shape descriptors \cite{schapira1995tomography, curry2022many, ghrist2018persistent}.
\begin{equation}
\label{ECC_def}
    \text{ECC}_K^v: \mathbb{R}\rightarrow\mathbb{Z}; t\mapsto \chi(K\cap\{x\in\mathbb{R}^n: x\cdot v\leq t\}).
\end{equation}
\begin{equation}
\label{ECT_def}
    \text{ECT}(K): S^{n-1}\times\mathbb{R}\rightarrow \mathbb{Z}; (v,t)\mapsto \text{ECC}_K^v(t).
\end{equation}
Since the ECT is sensitive to the orientation of the complex, DETECT (\cite{marsh2022detecting}) was proposed to smooth, normalize, and average across all $\text{ECC}_K^v$: 
\begin{align}
    &\text{DETECT}(K): [-a, a] \rightarrow \mathbb{R};\\
    & c \mapsto \int_{S^{n-1}}\left(\int_{-a}^c \text{ECC}_K^v(t) - (\frac{1}{2a}\int_{-a}^a \text{ECC}_K^v(s) \text{d}s) \text{d}t \right) \text{d}v
\end{align}

We apply ECT and DETECT to the synthetic networks. To visualize the discriminative power of each method, we embed the resulting shape descriptors in two dimensions using multidimensional scaling (MDS) \cite{borg2005modern, kruskal1964multidimensional} (which places shapes as points so that more similar shapes are closer together). The standard ECT distinguishes two classes of perfectly aligned shapes, whereas DETECT fails (\Cref{fig: toy example}\textit{C-D}, left), illustrating the information loss introduced during the averaging step. Indeed, both complexes yield identical DETECT transforms.

When the shapes are perturbed randomly to simulate the misalignment in applications, both methods fail to distinguish the two classes of complexes (\Cref{fig: toy example}\textit{C–D}, right).

\subsection*{Introduction to SampEuler}
To overcome the sensitivity of ECT to perturbations on input shapes and the information loss of DETECT, we propose to record only the output curves and not the direction $v$ in ECC. This shape transform is isometry-invariant. To achieve this mathematically, we consider the ECT pushforward measure of $K$, which can be thought of as the probability law of ECCs of $K$ for a given direction chosen uniformly at random. The ECT pushforward measure is proven to be injective up to rotations and reflections (different shapes always give different measures)~\cite{curry2022many}.

By convention, the shape of a given complex is the equivalence class of the complex up to rotations, reflections, and translations (that is, two structures have the same shape if one is a rotated, reflected, or shifted copy of the other). We centre all complexes by translating their centres of mass to the origin to filter the translation information. On these centred complexes, the ECT pushforward measure serves as a complete shape descriptor: two complexes have the same ECT pushforward measure if and only if they have the same shape. The ECT pushforward measure is a probability law on the function space (a probability distribution over the resulting curves), and is therefore hard to compute in practice. Based on the ECT pushforward measure, we propose an isometry-invariant shape descriptor, \textbf{SampEuler}. As shown in \Cref{fig: toy example}\textit{A}, for an input shape $K$, SampEuler samples curves from the ECT pushforward measure by randomly sampling directions and computing their corresponding ECCs, and outputs an empirical measure (the sampled approximation of this distribution) formed by these sampled curves. We show that the SampEuler converges to the ECT pushforward measure as the discretization is refined. Equipped with the Wasserstein distance~\cite{Kantorovich1960MMOPP} (a standard metric of how different two distributions are), it induces a continuous map from the input complex to its SampEuler representation (this stability means that small perturbations of a shape produce only small changes in its descriptor). Together, this convergence and stability give confidence that the descriptor is accurate: because SampEuler converges to the ECT pushforward measure, which is invariant under rotations and reflections, it inherits this invariance and so depends only on the shape; and because it is stable, noise or small imperfections in the data alter the descriptor only slightly, so that two shapes can still be compared reliably. We also introduce a vectorization and visualization method for SampEuler. For small datasets, the vectorized representation yields a low-dimensional feature space that supports effective training of standard machine-learning models and, when paired with interpretation techniques, allows direct geometric interpretation. For larger or geometrically complex datasets, using the full SampEuler representation preserves complete geometric information with theoretical guarantees, avoiding information loss from dimensionality reduction. Details and proofs of SampEuler convergence and continuity are included in the \textit{SI Appendix}.

We apply SampEuler and its vectorization to the synthetic networks, 
visualizing the results using MDS (\Cref{fig: toy example}\textit{E-F}). In both cases, SampEuler and its vectorization correctly distinguish the two 
classes (see \textit{SI Appendix} for more details), demonstrating that they are both isometry invariant and informative.

\begin{figure*}[p]
\centering
\includegraphics[width=14cm]{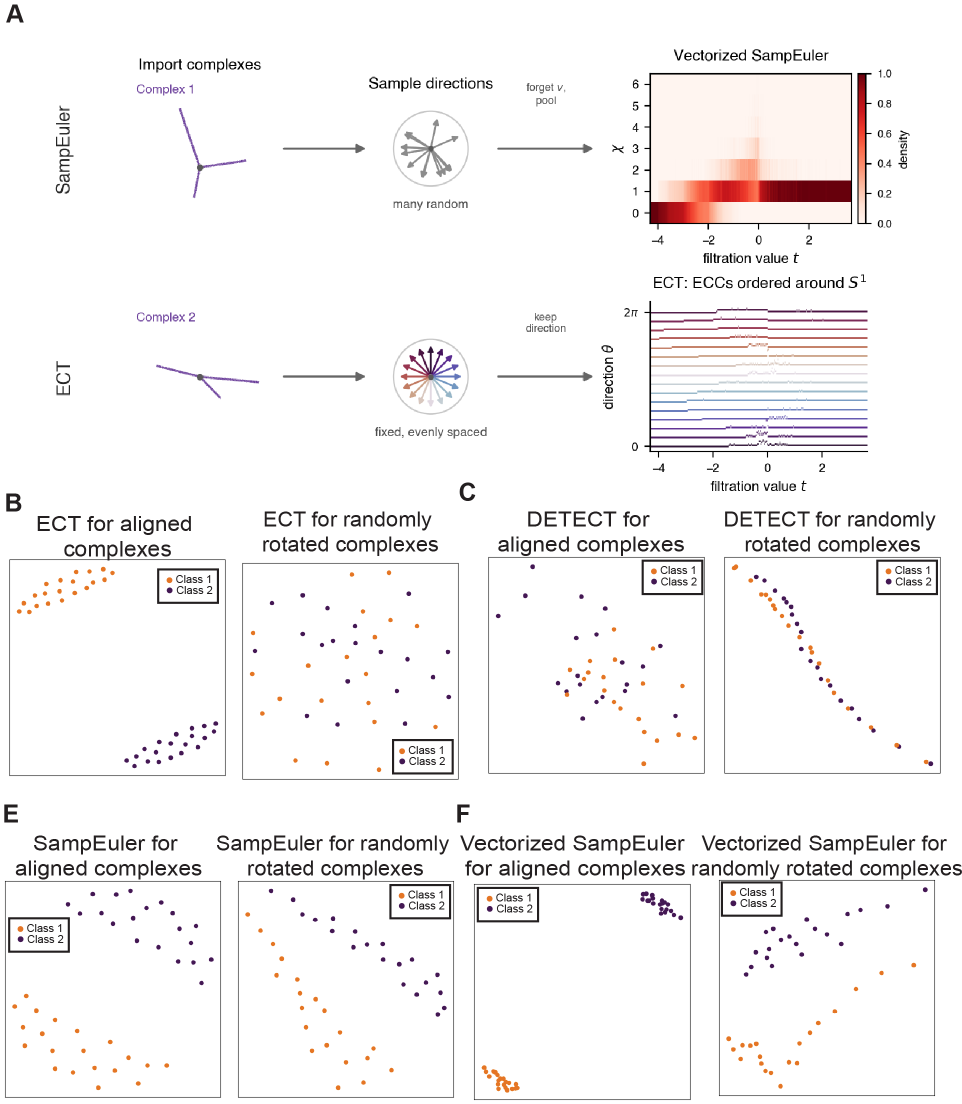}
\caption{MOTIVATION FOR NEW PUSHFORWARD MEASURE on ECT. We apply ECT, DETECT, SampEuler, and vectorization of SampEuler to aligned toy simplicial complexes (left) and randomly rotated simplicial complexes (right). (\textit{A}) Illustration and comparison of the ECT and SampEuler computation pipelines on two example complexes (each consisting of three edges of fixed lengths sharing a common origin, randomly rotated with Gaussian noise added along the edges). These two complexes are representative members of the two shape classes that are classified in the subsequent panels' experiments (\textit{B}--\textit{E}). For SampEuler (top row), we sample a large set of random directions, compute one integer-valued Euler characteristic curve (ECC) per direction, then discard the direction labels and pool the curves; its vectorization is obtained by discretizing the $(\chi, t)$ plane into a grid and computing, for each cell, the density of curves passing through it, giving an approximation of the distribution of the sampled ECCs. Because the directions are forgotten, the resulting descriptor is invariant to rotation. For ECT (bottom row), the directions are instead fixed and evenly spaced on $S^1$, and the resulting ECCs are retained in their angular order; keeping this direction information makes the descriptor depend on the orientation of the complex, so randomly rotating the input scrambles the ordered curves and degrades class separation (panel \textit{B}).  (\textit{B}) Two-dimensional MDS embedding (a projection that preserves pairwise distances between shapes) of ECT shape descriptors. The results of two distinct classes can be separated when all complexes are aligned. However, in the rotated case, ECT results are heavily affected by the rotations and fail to distinguish two classes. (\textit{C}) MDS embedding of DETECT shape descriptors. DETECT fails to distinguish two classes of shapes in both experiments due to information loss. (\textit{D}) MDS embedding of SampEuler shape descriptors. In both experiments, the two classes are separated correctly. (\textit{E}) MDS embedding of vectorized SampEuler shape descriptors. In both experiments, the two classes are separated correctly.  }
\label{fig: toy example}
\end{figure*}

\subsection*{SampEuler Outperforms Topological Statistics}
We apply SampEuler and its vectorization with Support Vector Machine (SVM) to classify the MPEG-7 dataset and benchmark against other methods. Details of image preprocessing and the experiments can be found in Materials and Methods.

We benchmark SampEuler against standard TDA methods that use the 10-class subset of MPEG-7: Persistence Scale Space Kernel \cite{reininghaus2015stable}, Persistence Weighted Gaussian Kernel \cite{kusano2016persistence}, Sliced Wasserstein Kernel \cite{carriere2017sliced}, Tangent Vector Representation with Gaussian Kernel \cite{anirudh2016riemannian}, Persistence Fisher Kernel \cite{le2018persistence}, and QUPID \cite{van2025discrete}. As shown in \Cref{fig:main_2}\textit{B}, SampEuler outperforms conventional TDA methods. 

We run the classification task over the whole MPEG7 dataset to compare with other methods (\Cref{fig:main_2}): Skeleton Paths \cite{bai2009integrating}, Bag of Contours \cite{wang2014bag}, Persistence Image + Neural Network (PI+NN) \cite{adams2017persistence}, Persistence Codebook + Neural Network (PC+NN) \cite{hofer2019learning}. SampEuler and its vectorization exhibit consistent performance; however, they are outperformed by dedicated 2D image recognition algorithms and TDA with deep learning approaches. We remark that topological methods offer more interpretability than deep learning, whose decision processes can be comparatively opaque.

\begin{figure}[htbp]
  \centering
  \includegraphics[width=\linewidth]{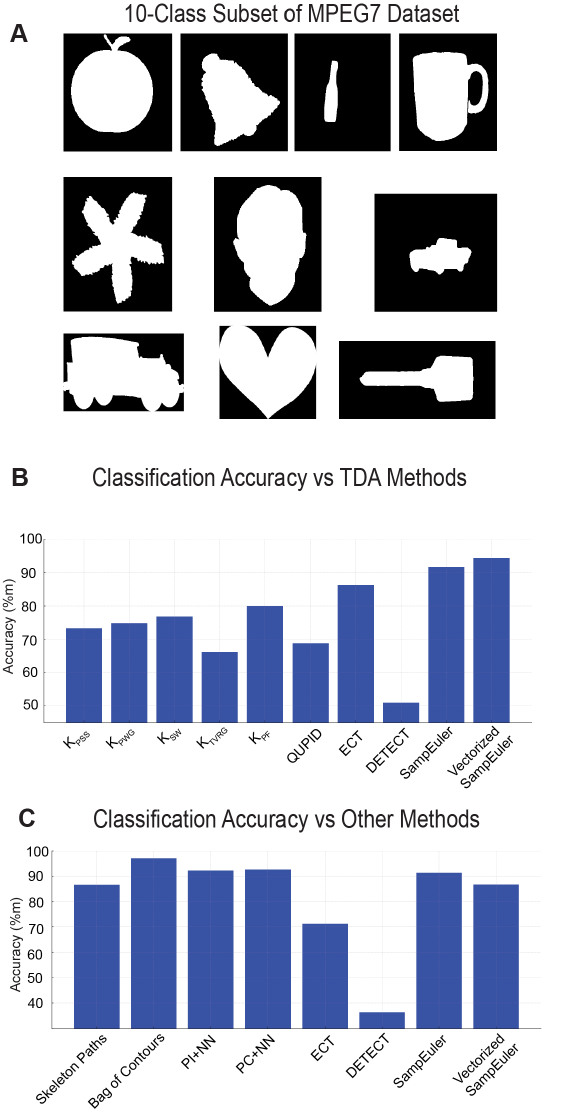}
  \caption{CLASSIFICATION STUDY on MPEG7 DATASET. We compare SampEuler with conventional shape analysis methods using the MPEG7 dataset \cite{ralph1999mpeg7CEShape1}. (\textit{A}) Examples of the MPEG7 dataset images, one sample from each of the 10-class subset used in previous studies \cite{le2018persistence, le2019tree}. (\textit{B}) The barplot of the accuracy of each method in the classification task of the 10-class subset of MPEG7. We denote Persistence Scale Space Kernel \cite{reininghaus2015stable} by $\mathcal{K}_{PSS}$, Persistence Weighted Gaussian Kernel \cite{kusano2016persistence} by $\mathcal{K}_{PWG}$, Sliced Wasserstein Kernel \cite{carriere2017sliced} by $\mathcal{K}_{SW}$, Tangent Vector Representation with Gaussian Kernel \cite{anirudh2016riemannian} by $\mathcal{K}_{TVRG}$, Persistence Fisher Kernel \cite{le2018persistence} by $\mathcal{K}_{PF}$. (\textit{C}) The barplot of the accuracy of each method in the classification task of the whole MPEG7 dataset (70 classes). We use methods: Skeleton Paths \cite{bai2009integrating}, Bag of Contours \cite{wang2014bag}, Persistence Image + Neural Network (PI+NN) \cite{adams2017persistence}, Persistence Codebook + Neural Network (PC+NN) \cite{hofer2019learning}, ECT, DETECT, SampEuler and vectorization of SampEuler.}
  \label{fig:main_2}
\end{figure}

To understand the effect of discretizing ECT and ECT pushforward in practice, we use the MPEG-7 dataset and vary the number of filtration values (the levels at which the shape is progressively scanned) sampled along each curve and the total number of curves sampled, and perform classification tasks with SVM. Experimental details and full results can be found in \textit{SI Appendix}. We report that if we sample more than 100 directions and 1000 points along each direction, we obtain high accuracy, and the additional information obtained for the MPEG7 dataset is limited. Moreover, leveraging the optimised implementation in Eucalc \cite{lebovici2024efficient} enables efficient computation of these descriptors, making the above parameter choices practical in routine use.  

\subsection*{SampEuler Quantifies Ageing Impact on Thymic Structure}
To assess how ageing affects local architecture of TEC, we sub-sampled  quadrants of size $200\times 200$ pixels in representative areas of the cortex (i.e. K8 segmentation mask) and medulla (K14 segmentation mask). We sampled 20 quadrants per thymic lobe, resulting in a total of 40 quadrants for young and 100 for old. They were used as input to a supervised, binary classification of young versus old (\Cref{fig:main_3}\textit{A}). For this resolution, we compute $360$ directions and $300$ points along each direction for all ECT-based methods.

We benchmark our proposed methods against the Persistence image \cite{adams2017persistence} with the signed distance function, AlexNet \cite{krizhevsky2012imagenet}, Visual Transformer \cite{dosovitskiy2020image}, MobileNet V2 \cite{sandler2018mobilenetv2}, EfficientNetB0  \cite{tan2019efficientnet}, ECT and DETECT to verify that they accurately quantify local thymic structure. Details of the experiments can be found in \textit{SI Appendix}. The results are shown in \Cref{fig:main_3}\textit{B-C}. SampEuler and its vectorization give better accuracy with shorter run times, demonstrating their ability to efficiently detect age-related structural differences within the cortex or medulla.

To interpret geometric differences between age groups, we use the vectorization of SampEuler to visualize Euler characteristic curves across all directions. We apply the SHAP algorithm (which attributes a classifier's decision to each input feature) to identify which regions of this image are most important for classifying each age group. We repeat training 50 times, retaining trials with accuracy above 80\% as valid. For each age group, we average SHAP values across test samples from all valid trials to obtain a final feature importance map (\Cref{fig:main_3}\textit{D}). These maps reveal the distribution of Euler characteristic curves that the classifier recognizes as typical for each age group: regions of high importance indicate where curves consistently lie, effectively summarizing each group's typical shape signature.

In the cortical region (K8) of the thymus, the importance map (\Cref{fig:main_3}\textit{D}) indicates, roughly, how the classifier reads the typical Euler characteristic curve for each age group: for old quadrants it points to a strong, steep decrease in the Euler characteristic to a much lower value, whereas for young quadrants the decrease is gentler and ends at a higher value. Since the Euler characteristic for the images is computed from their corresponding 2D cubical complexes (the image represented as a grid of cells), we have that

    $$\text{Euler characteristic } = \#(\text{connected components}) - \#(\text{loops}).$$
Here, a \emph{connected component} is a separate piece of the epithelial network, and a \emph{loop} is a ring of tissue enclosing a gap; formally, their counts are the zeroth and first Betti numbers $\beta_0$ and $\beta_1$ of the cubical complex (the ranks of its homology groups). An Euler characteristic curve (ECC) is built by sweeping across the image in a fixed direction and recording, at each step, the Euler characteristic of the part scanned so far. Its value therefore tracks the running balance between connected pieces and loops as the sweep proceeds.
Because the Euler characteristic equals the number of connected components minus the number of loops, a curve that falls to a lower value reflects a greater number of loops over separate pieces. The strongly decreasing curve that the classifier reads as typical of old cortex (\Cref{fig:main_3}\textit{D}) therefore points to more loops there than in young, and the steepness of that decrease suggests these loops are more densely distributed.

In the medulla (K14) of the thymus, the two age groups show no major difference in their overall Euler characteristic trends; the main distinction is that old quadrants are far more variable, with more diverse curve shapes than young. We confirm these interpretations with independent methods (see \textit{SI Appendix}).

We also analyzed how morphological and cellular composition differences between age groups vary with cortical depth (\Cref{fig:main_3}\textit{E}). For each thymus, we defined a depth kernel ranging from 0 at the outer capsule to 1 at the cortico-medullary junction (CMJ). At each depth, we used energy distance \cite{szekely2013energy} (a statistical metric of how different two distributions are) to quantify differences between age groups based on pairwise SampEuler distances (capturing structural differences) and T-cell enrichment ratios (capturing cellular composition across whole tiles; see \textit{SI Appendix} for definitions). The results show that both morphological and cellular differences between age groups occur near the capsule and diminish towards the CMJ, highlighting coincident changes between cTEC morphology and cellular composition with age. This is consistent with the SHAP analysis above, in which the cortex showed the most distinct age-related morphology while the medulla was relatively similar: the age difference is greatest in the outer, sub-capsular cortex and fades towards the cortico-medullary junction.

These cortical changes are statistically significant: across all K8 cortical quadrants, a two-sample permutation test on the pairwise SampEuler distances rejects the null hypothesis of a common shape distribution ($p<2\times10^{-5}$, $50{,}000$ permutations). Because SampEuler is a complete shape descriptor (equal representations if and only if identical shape~\cite{curry2022many}), this single test is sensitive to any change in shape, not to a pre-selected feature.

\begin{figure*}
    \centering
    \includegraphics[width = 17.5cm]{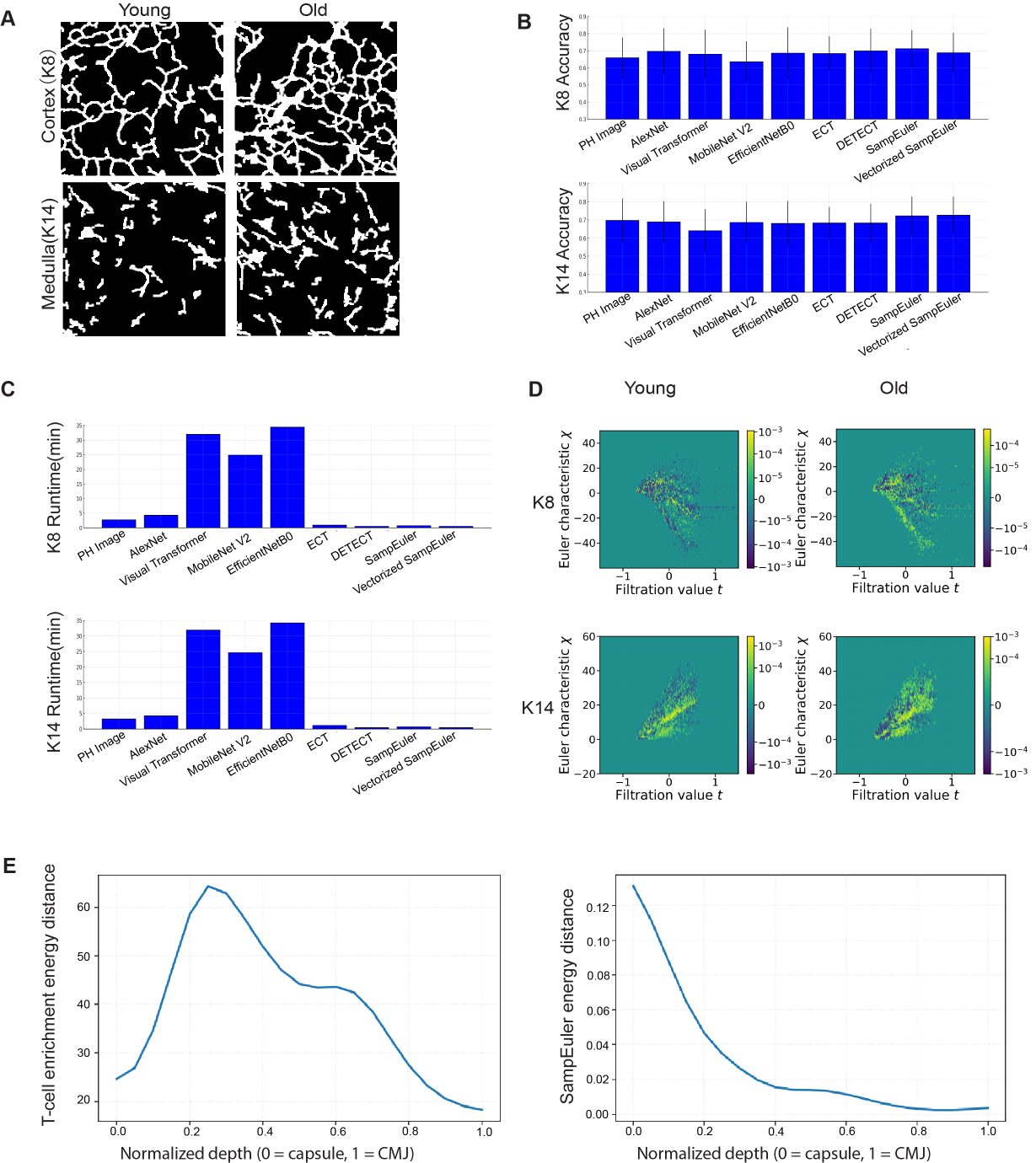}
    \caption{CLASSIFICATION STUDY ON SELECTED THYMIC QUADRANTS. We segment masked images of thymic epithelium into quadrants of size $200\times 200$ pixels. Then, we classify them into old and young groups separately for K8 and K14 markers. (\textit{A}) Example images of thymic quadrants of each age group and each marker. (\textit{B}) Barplots of the classification accuracies of each method. The top plot contains accuracies for classifying K8 quadrants, and the bottom plot contains accuracies for classifying K14 quadrants. The error bars show the standard deviations for repeating the classification 50 times. (\textit{C}) Barplots of the classification runtimes of each method. The reported runtime is the total time for each method, including feature generation, model training, and test inference. The top plot contains runtimes in minutes for classifying K8 quadrants, and the bottom plot contains runtimes in minutes for classifying K14 quadrants. (\textit{D}) We use the SampEuler vectorization as input features for classification training. We repeat the train-test process 50 times and keep trials with over $80\%$ accuracy. For trials with accuracy over $80\%$, we compute the SHAP value for each feature. For each marker and age group, we average across all test samples of the age group and then across all valid trials. The plot shows average SHAP values for each feature as feature importances towards classifying the sample as a given age group. The horizontal axis is the filtration value $t$ and the vertical axis is the Euler characteristic $\chi$. The colour gives the mean SHAP value on each feature: a positive value (yellow) indicates that ECCs passing through that $(t,\chi)$ cell push the classifier towards the given age group, while a negative value (purple) pushes it away. (\textit{E}) Depth-dependent comparisons between thymic quadrants from different age groups. For both plots, the x-axis shows the normalized depth of the thymus, where 0 corresponds to quadrants at the capsule and 1 to quadrants at the cortex–medulla junction (CMJ). In the left plot, the y-axis shows the energy distance \cite{szekely2013energy} between age groups based on T-cell enrichment ratios. In the right plot, the y-axis shows the energy distance between age groups based on pairwise SampEuler distances.}
    \label{fig:main_3}
\end{figure*}

\subsection*{Comparing Cell Distribution with Shape}
In addition to TEC segmentation, we annotated the haematopoietic and stromal cell types for this dataset. To compare the impact of ageing on both cell distributions and TEC architecture, we perform k-medoids clustering~\cite{kaufman2009finding} separately based on cell enrichment ratios (see \textit{SI Appendix} for definitions) and SampEulers. 

Age-related thymic involution impacts medullary and cortical TEC types differentially. While both cell populations decrease in absolute terms, the relative representation of cTEC to mTEC increases with age~\cite{elife}. cTECs are known to mediate key biological processes like lineage commitment and positive T cell selection~\cite{white2023diversity}. These processes are essential for haematopoietic T cell precursors to (a) adopt a T cell lineage, and later in development to (b) probe their ability to recognize self-peptide MHC complexes. Therefore, we decided to focus on the cortical region for the subsequent analyses and investigate morphological changes in the cortical thymic epithelial cells along with compositional changes of nearby T cell subtypes. For the composition definition we limited this analysis to the most abundant and biologically informative T cell subtypes in the cortical compartment: double-negative type 3 (DN3) T cells, pre (positive) selection double-positive T cells (Presel DP), post (positive) selection double positive T cells (CD69-) (Postsel DP, CD69-), and post (positive) selection double positive T cells (CD69+) (Postsel DP, CD69+). Full phenotype marker definitions are provided in \textit{SI Appendix}. Furthermore, we restricted the compositional analysis to those thymocyte centroids that lie within $5\,\mu$m of the K8 segmentation mask. We clustered all cortical quadrants from both age groups together so that cluster labels are comparable across age.

To allow faster computation of the pairwise Wasserstein distance matrix of all cortical quadrants, we use the sliced Wasserstein distance algorithm \cite{bonneel2015sliced} with $50$ slices to approximate the true Wasserstein distance. 

We first used the silhouette analysis \cite{Rousseeuw1987Silhouette} (a standard way to choose the number of clusters) to find the best $k$ for each clustering. The results suggest the highest silhouette score at 2 clusters for both the shape and the cell composition based clustering. In practice, 2-medoid results do not provide sufficient biological granularity so we fixed $k=3$ in both the SampEuler and the cell enrichment ratio-based clustering (\Cref{fig:main_4}\textit{A}). To complement the cell enrichment based clustering we calculated the average cell type enrichment ratios for key cortical cell types (\Cref{fig:main_4}\textit{B}). We observe that the overall spatial arrangement of the shape-clustering is largely preserved with age (\Cref{fig:main_4}\textit{A} i and iii), but this does not hold for the cell enrichment-based clustering. To quantify this observation, we computed the proportion of shape clusters within each cell enrichment ratio cluster separately for each age group (\Cref{fig:main_4}\textit{C}). The confusion matrices reveal that for the young tissue, the shape and the cell enrichment ratio-based clustering show overlap, with higher intensity concentrated along the diagonal (\Cref{fig:main_4}\textit{C} i). For the old tissue, however, the cell enrichment clusters representing the sub-capsular, deep cortical, and cortico-medullary areas (\Cref{fig:main_4}\textit{A} iv) all show preferential association with shape cluster 2 (\Cref{fig:main_4}\textit{A} iii). To confirm this observation, we selected CD4 and CD8 double-negative, type 3 thymocytes (DN3s; a cell type that in young thymi is typically associated with the sub-capsular localization; \textit{SI Appendix}, Fig. S13\textit{A}) and investigated whether there is a significant shift of that population into the deeper cortex with age. Qualitatively, the DN3 population in old thymi appears to be more diffuse and less confined to the sub-capsular area than in young (\textit{SI Appendix}, Fig. S13\textit{B}). To see if this qualitative difference is significant, we computed the distances between the capsule and all DN3 centroids within a $500\,\mu$m (sub-capsular) range for all samples (examples shown in \textit{SI Appendix}, Fig. S13\textit{C-D}). Then, we compared the resulting distributions of cell distances using a two-sample t-test ($p = 6.3\times10^{-11}$) and a Wilcoxon test ($p = 3.1\times10^{-18}$). These significantly different distributions indicate that DN3s in older thymi are less tightly localized to the sub-capsular region, suggesting that location-specific cellular organization in the sub-capsular region is disrupted with age.
\begin{figure*}
    \centering
    \includegraphics[width=15.5cm]{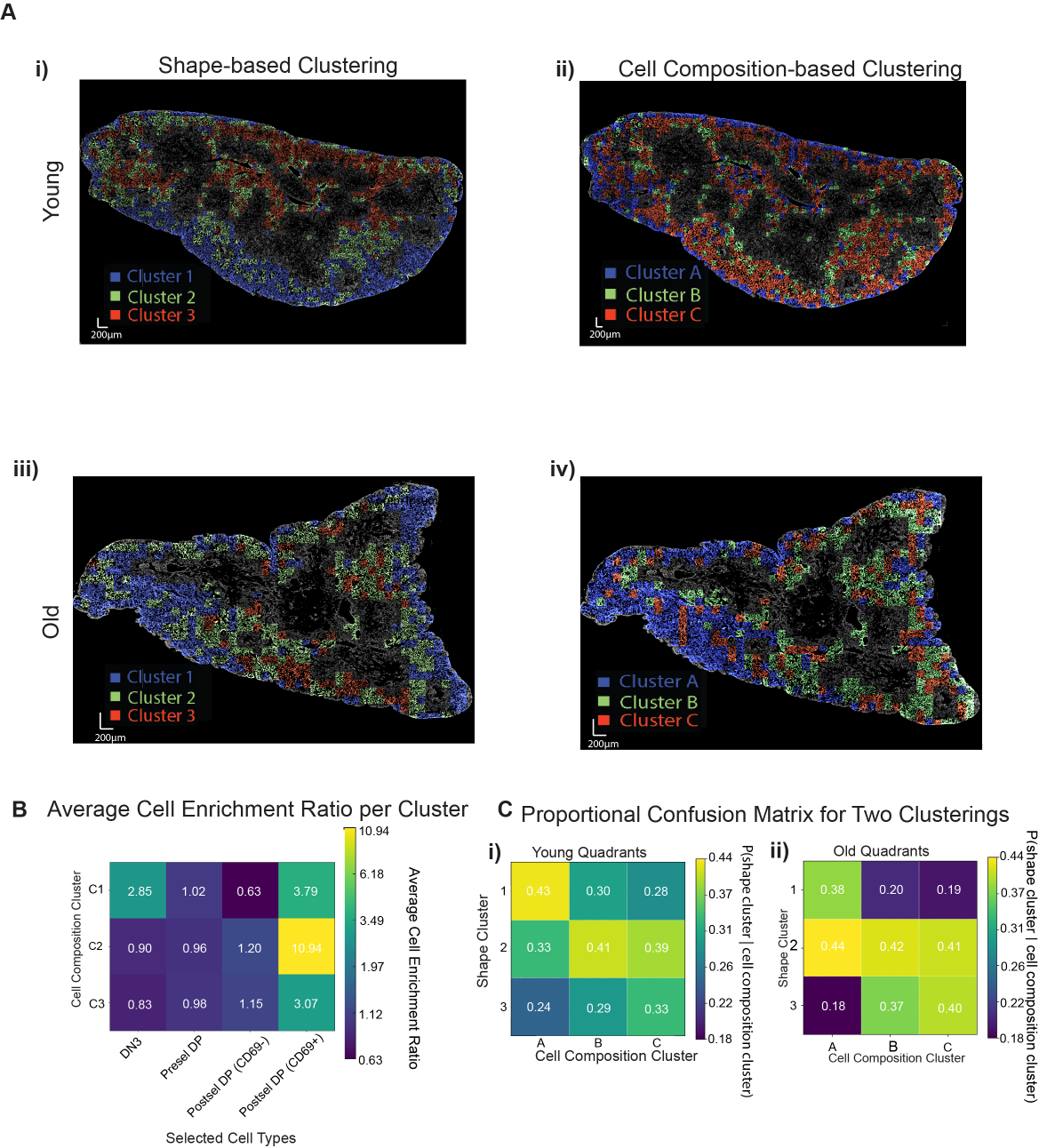}
    \caption{K-MEDOIDS CLUSTERING RESULTS. We cluster all cortex quadrants from both age groups using SampEuler distances and selected cell enrichment ratio distances with k-medoids clustering. We match the clustering results by finding the label permutation that gives the largest number of matched patches. (\textit{A}) Example overlay images of the thymic quadrants. The top two samples are young thymi, and the bottom two samples are old thymi. \textit{A} i) and iii) use SampEuler distances to perform shape-based clustering, and \textit{A} ii) and iv) use selected cell type enrichment ratios. We permute the labels to obtain the best match between two results. (\textit{B}) Average cell enrichment ratios by cluster and cell type. Rows represent the three cell-composition-based clusters, columns represent selected cell types, and values show the mean enrichment ratio for each cell type within each cluster. (\textit{C}) Proportional confusion matrices matching the two clustering results. The $(i,j)$-th entry of the matrix is given by the formula $\frac{\text{No. of quadrants being classified as shape cluster } i \text{ and cell composition cluster } j}{\text{No. of quadrants being classified as cell composition cluster } j}$} 
    \label{fig:main_4}
\end{figure*}

\section*{Discussion}
In this paper, we introduced and implemented SampEuler and its vectorization as two novel topological shape descriptors. We applied them to a benchmarking shape image dataset and a mouse thymus dataset. The results show that both novel shape descriptors are robust to rigid transformations, resolving the high sensitivity issue of ECT \cite{turner2014persistent}. Moreover, SampEuler can extract shape information effectively to resolve shape differences with high accuracy. 

We demonstrated the effectiveness of SampEuler and its vectorization on synthetic three-edge tree simplices, highlighting their capacity to capture and emphasize purely shape-based information. Benchmarking on the MPEG7 dataset confirmed that SampEuler achieves higher classification accuracy than conventional TDA methods based on persistent homology. While dedicated 2D image recognition algorithms and TDA with deep learning approaches achieve higher accuracy, SampEuler provides a favorable balance between performance and interpretability.

Their application to the mouse thymus dataset underscored the strength of SampEuler in the study of complex biological structures. To the best of our knowledge, this is the first complete, quantitative characterisation of the age-related change in the whole shape of the thymic epithelial network, rather than through the separate indices used previously. The random forest classifiers trained on SampEuler and the vectorization of SampEuler achieved competitive accuracies ($>70\%$) for age classification relying on a limited set of training data (i.e. cell shapes) at notably lower computational cost than deep learning models. This shows that a) a quantifiable change in cell morphology occurs with age for both cTEC and mTEC and b) this shape change can inform the age-category of the underlying sample. These age-related changes are most pronounced at the outer, sub-capsular boundary of the cortex, where the epithelial network forms more loops with age, while the medulla remains relatively similar between age groups. While this holds true for the two ages considered, a more granular dataset is required to confirm its validity across a broader life trajectory.

Applying the SHAP algorithm to random forest classifiers trained on SampEuler vectorizations enabled a mathematical characterization of cell shape. Although classification reveals detectable shape differences between age groups, these age-related changes appear to affect different morphological subtypes separately: each shape cluster undergoes its own characteristic shift with age, but cluster-specific features are nonetheless preserved. Consequently, the overall spatial organization of shape clusters remains largely intact (\Cref{fig:main_4}\textit{A} i and iii). However, when considering cell composition clustering, a spatial shift becomes apparent with age. In young thymi, Cluster A is confined to the sub-capsular region, whereas in old thymi, this same compositional pattern extends deep into the cortical compartment (\Cref{fig:main_4}\textit{A} ii vs. iv).

Relating morphological changes to cellular composition changes relies on a descriptor that captures the full morphological change in a single quantity. Previous quantifications of thymic architecture are either global, summarising whole-tissue compartments or network-level indices~\cite{irla2013three, flores1999analysis, lagou2026morphometric}, or local, describing individual cells~\cite{venablesDynamicChangesEpithelial2019a}, and each captures only a single aspect of the geometry or topology; they therefore cannot be applied here to capture all the local shape changes consistently across samples or to correlate them with cellular composition as we do. By reducing the entire shape change to one descriptor, SampEuler makes such correlations possible.

This compositional reorganization could arise from several age-related factors, such as loss of intra-thymic cytokine gradients that normally maintain youthful regional organization, or failure in the provision of lympho-stromal cross-talk necessary for double-negative T cell progression. However, additional experimental validation is required to confirm these hypotheses.

Building on these positive findings, we propose that future work investigate the theoretical properties of the ECT pushforward measure in general simplicial complex settings (pinning down exactly which shapes the descriptor can always tell apart; the open cases are symmetric shapes such as an exact square or circle, which do not occur in real tissue), as well as develop formal results on the information captured by the vectorization of SampEuler. While the present study focuses on 2D image masks, the pipeline extends naturally to simplicial complexes in both 2D and 3D. In particular, applying the framework to 3D images would better reflect the underlying biological structures and further enhance the quality of the analysis pipeline introduced in this work.

\section*{Materials and Methods}

\subsection*{MPEG-7 Dataset}
 The MPEG-7 shape dataset was used for benchmark shape analysis methods. We applied algorithms to fill in the holes within the objects and retained only the largest connected component inside the image. We then normalized the image sizes to be equal across all classes (See \textit{SI Appendix} for details).

\subsection*{Thymus Experimental Procedure}
Individual lobes from the thymi of two young wild type (C57BL/6) female mice (4 weeks of age) and five old C57BL/6 female mice (75-77 weeks of age) were collected, excess fat removed and frozen in optimal cutting temperature (OCT, TissueTek) polymer. Tissue sections were cut at 7 $\mu$m thickness and processed according to the Phenocycler-Fusion user guide (v2.1.0, Akoya Biosciences) using the Sample kit for Phenocycler-Fusion (Akoya Biosciences). Briefly, thymic tissue was fixed in acetone (Merck) for 10 minutes at room temperature, rehydrated in a hydration buffer (Akoya Biosciences) for 10 minutes and fixed in paraformaldehyde (PFA, 1.6\%, Thermo Fisher Scientific) for 10 minutes at room temperature. It was then equilibrated in a staining buffer (Akoya Biosciences) for 20 minutes and stained in a staining buffer containing four blockers (Akoya Biosciences) and antibodies barcoded with DNA oligonucleotides overnight at $4 ^\circ\mathrm{C}$ . The tissue was washed with the staining buffer 10 minutes, fixed in 1.6\% PFA for 10 minutes, washed in PBS and exposed to ice-cold methanol (Merck) for 5 minutes. After rinsing in PBS, the tissue was fixed in a fixation solution (Akoya Biosciences) for 20 minutes at room temperature, washed in PBS and stored in a storage solution (Akoya Biosciences) until loading into the Phenocycler for cyclic hybridisation with fluorescently labelled complementary oligonucleotides \cite{CodexGrandma}. The barcoded antibodies listed in Table S4 were either from Akoya Biosciences or purchased from indicated providers and custom conjugated using a conjugation kit (Akoya Biosciences). Images were acquired using the Fusion slide scanner (Akoya Biosciences). The iterative imaging resulted in image stacks with 40 channels and a resolution of 0.5 $\mu$m/pixel.

\subsection*{Thymus Image Processing and Cell Annotation}
Image analysis was performed in MATLAB (R2024b). Preprocessing involved restricting the regions of interest to the individual thymus sections, thereby removing artefacts. The background fluorescence was subtracted, and the fluorescent intensities of the markers were normalized across tissue samples. Cell segmentation was performed in two ways depending on the cell morphology. Round cell boundaries (e.g., immune cells) were outlined using Cellpose \cite{CellposeGrandpa}. Stromal cell boundaries (e.g., TEC, fibroblasts, endothelial cells) have a more complex morphology and were therefore customly segmented using MATLAB's multithresh() and bwskel() functions (Image Processing Toolbox), followed by pseudo-cellularization into 25x25 pixel sized tiles. For all cells and pseudo cells, centroid coordinates, area and average fluorescence intensities were exported. Cells with outlier fluorescence intensities were removed, and the fluorescence intensity of each marker was then scaled between 0 and 1. This scaling enabled the selection of marker intensity thresholds which were used to phenotype cells (see Supplementary Table S5). For the positional analysis of the DN3 T cells, first the concave hulls of all samples were identified using the R(V4.5.1) package concaveman (V1.2.0). Next, sf (V1.0-22) was used to compute the shortest distance within the first 500 $\mu$m of all DN3 centroids to the capsule. Those distances were aggregated across all mice, regardless of age, and compared using base R's implementation of the \texttt{t.test} and the \texttt{Wilcox.test}.

\subsection*{Euler Characteristic Transform (ECT)}
ECT computations for grayscale images were performed using the Eucalc package (\url{https://github.com/HugoPasse/Eucalc}). The algorithm normalizes images to a size of 1 unit by 1 unit. We chose the range for the parameter of sublevel sets as $(-1.5, 1.5)$ to allow full inclusion of the image along all possible directions. The number of directions and sublevel set parameters were tuned for each dataset; an ablation study examining parameter sensitivity is provided in \textit{SI Appendix}.

\subsection*{SampEuler and Its Vectorization}
SampEuler is a discretization of the ECT pushforward of a uniform measure on the sphere of directions. For each complex, we draw a fixed number of directions independently and uniformly at random from the unit circle, compute the corresponding Euler characteristic curve, and retain the resulting curves as an \emph{unordered} collection, that is, as an empirical measure on the space of curves. We prove that it converges to the ECT pushforward measure in the Wasserstein metric as the number of directions grows, so that for generic complexes it determines the shape class uniquely. Distances between two SampEulers are Wasserstein-1 distances between the corresponding empirical measures, approximated by the sliced Wasserstein distance~\cite{bonneel2015sliced} with $50$ slices when large pairwise distance matrices are required. The vectorization of SampEuler evaluates this empirical measure on sets of curves that are constant on prescribed intervals: we partition the plane of filtration value against Euler characteristic into a uniform grid and record, for each cell, the proportion of sampled curves that are constant on that cell's filtration interval and equal to the corresponding Euler characteristic value. This yields a low-dimensional, isometry-invariant feature vector that also serves as a visualization of the distribution of Euler characteristic curves, and from which DETECT~\cite{marsh2022detecting} can be recovered exactly. All parameters were held fixed across every sample within a given task. Formal definitions, the convergence and stability theorems with their proofs, and ablation studies over the number of directions and filtration values are given in \textit{SI Appendix}.

\subsection*{Classification, Clustering, and Statistical Analysis}
For the MPEG-7 benchmark, we built radial basis function kernels from pairwise Wasserstein distances between SampEulers and from pairwise $L_2$ distances between vectorized SampEulers, and classified with a support vector machine~\cite{cortes1995support} using a $70/30$ train-test split averaged over $100$ repeats. For the thymus quadrants, we trained random forest classifiers~\cite{breiman2001random} on vectorized SampEuler features with grid-searched hyperparameters, repeating the train-test split $50$ times and reporting the mean accuracy. The same 50-repeat train-test protocol was used for every baseline (ECT, DETECT, persistence images, and the deep learning models), so that accuracies and runtimes are directly comparable. Feature importances were obtained by computing SHAP values~\cite{lundberg2017unified} for every trial exceeding $80\%$ accuracy and averaging them over test samples and trials, yielding the importance maps in \Cref{fig:main_3}\textit{D}. To test for age-related shape change, we applied a two-sample permutation test with $50{,}000$ permutations to the pairwise SampEuler distances of all cortical (K8) quadrants. Depth-resolved comparisons used the energy distance~\cite{szekely2013energy} between age groups, evaluated at each value of a normalized depth coordinate running from 0 at the capsule to 1 at the cortico-medullary junction, and applied both to pairwise SampEuler distances and to T-cell enrichment ratios. Quadrants were clustered with k-medoids~\cite{kaufman2009finding} separately on SampEuler distances and on cell enrichment ratios, pooling both age groups so that cluster labels are comparable across age, with $k=3$ chosen as described in \textit{Results}. Distances from DN3 centroids to the capsule were compared between age groups using a two-sample t-test and a Wilcoxon rank-sum test. Full parameter settings, definitions of the enrichment ratio, and additional analyses are provided in \textit{SI Appendix}.

\subsection*{Ethics Statement}
Both the caring for the mice and the experiments were performed in compliance with permissions of the Cantonal Veterinary Office of Basel-Stadt, Switzerland, and in accordance with Swiss federal regulations.

\section*{Data Availability}
Code for generating synthetic networks, preprocessing MPEG-7 images, and computing ECT-based methods for simplicial complexes is available at \url{https://github.com/reddevil0623/Shape-transform-descriptor-for-thymus-structures} (DOI: \url{https://doi.org/10.5281/zenodo.18614535}). Thymus segmentation mask quadrants, total masks, and cell composition data for each quadrant are also available at this repository. The original MPEG-7 dataset is publicly available from \url{https://dabi.temple.edu/external/shape/MPEG7/dataset.html}. Raw cell-level thymus data are available at \url{https://doi.org/10.5281/zenodo.21813640}.

\section*{Acknowledgments}
HAH, HMB, VL are grateful for the support provided by the UK Centre for Topological Data Analysis Engineering and Physical Sciences Research Council (EPSRC) grant EP/R018472/1 and EPSRC EP/Z531224/1. HAH gratefully acknowledges funding from the Royal Society RGF/EA/201074, UF150238 and EPSRC EP/Y028872/1 and EP/Z531224/1. HAH and HY acknowledge funding by the Leverhulme Trust Prize PLP-2020-252. HY is grateful for the support of the Leathersellers’ Foundation. HMB acknowledges support provided by the Mark Foundation for Cancer Research. AT and GAH are supported by the Wellcome Trust (Wellcome Collaborative Award SynThy, 211944/Z/18/Z), the Swiss National Science Foundation (310030B\_138655 and 310030\_215113), and the DiGeorge Syndrome Research Fund at Stanford University, generously established by an anonymous donor. LT gratefully acknowledges funding from Foundation Immune Engineering for Global Child and Adolescent Health.

\section*{Author Contributions}
H.A.H., G.A.H. and H.M.B. designed research; H.Y., A.T., L.T. and V.L. performed research; H.Y., V.L., H.A.H., and H.M.B. developed new analytic tools; S.Z. performed the experiments; H.Y., A.T. and L.T. analyzed data; and H.Y., V.L., A.T., L.T., G.A.H., H.A.H., and H.M.B. wrote the paper.

\section*{Competing Interests}
The authors declare no competing interests.

\clearpage
\onecolumn
\appendix
\beginsupplement

\section*{Supporting Information Appendix}

\setcounter{section}{0}
\renewcommand{\thesection}{S\arabic{section}}

\section{Data}

We use three sources of data: synthetic network data, published MPEG-7 Core Experiment Shape-Matching dataset (MPEG-7) \cite{Latecki2000ShapeCE}, and experimental image data of mouse thymi of different age groups. We explain the details of each dataset in turn: first the synthetic network dataset, then the preprocessing performed on the MPEG-7 dataset. Finally, we describe the thymus experiment and image processing details.

\subsection{Synthetic Networks}\label{section: toy sample}
We generate a class of synthetic networks as follows. We construct each sample by first generating three edges of lengths $2$, $3$, and $4$ with their common endpoint fixed at the origin $(0, 0)$. Each edge is discretized into $n=100$ equally‐spaced vertices (including the origin). For each edge, we select a rotation angle $\phi\sim\mathrm{Uniform}(0,2\pi)$ and rotate all vertices about $(0,0)$ by $\phi$.  For each non-origin vertex, we independently sample Gaussian noise 
    $$
    (\varepsilon_x,\varepsilon_y)\sim\mathcal N\bigl(0,\,\sigma^2\bigr),\quad\sigma=0.02,
    $$
and obtain the perturbed vertex position by adding this noise to the rotated coordinates. Connectivity (line segments between consecutive edge points and from the origin to the first point of each edge) is preserved.  We repeat this process $k=20$ times to generate $20$ structures for each shape class. We generate two different shape classes for comparison by choosing different angles of the three edges at the start.

\subsection{MPEG-7 Core Experiment Shape-Matching dataset}\label{section: mpeg7}
We use the standard MPEG-7 Core Experiment Shape-Matching (MPEG-7) dataset \cite{Latecki2000ShapeCE}, which contains 1400 binary images of silhouettes. The images are split into 70 categories, each containing 20 images based on the same types of objects (e.g., apples, horses, etc.).

We preprocess the images by first expanding the white connected component by 1 pixel in all directions to close small gaps and filter to keep only the largest connected component. We then fill in the holes of the filtered images using \cite{2020SciPy-NMeth}. To normalize across the dataset, each remaining white component is then rescaled to a fixed target area, and the image is padded with black pixels until it matches a common output size. Finally, we translate the white component so that the centroid of its white pixels coincides with the image centre. The preprocessing code is available at: \url{https://github.com/reddevil0623/Shape-transform-descriptor-for-thymus-structures} 

\begin{figure}

\centering
\includegraphics[scale = 0.8]{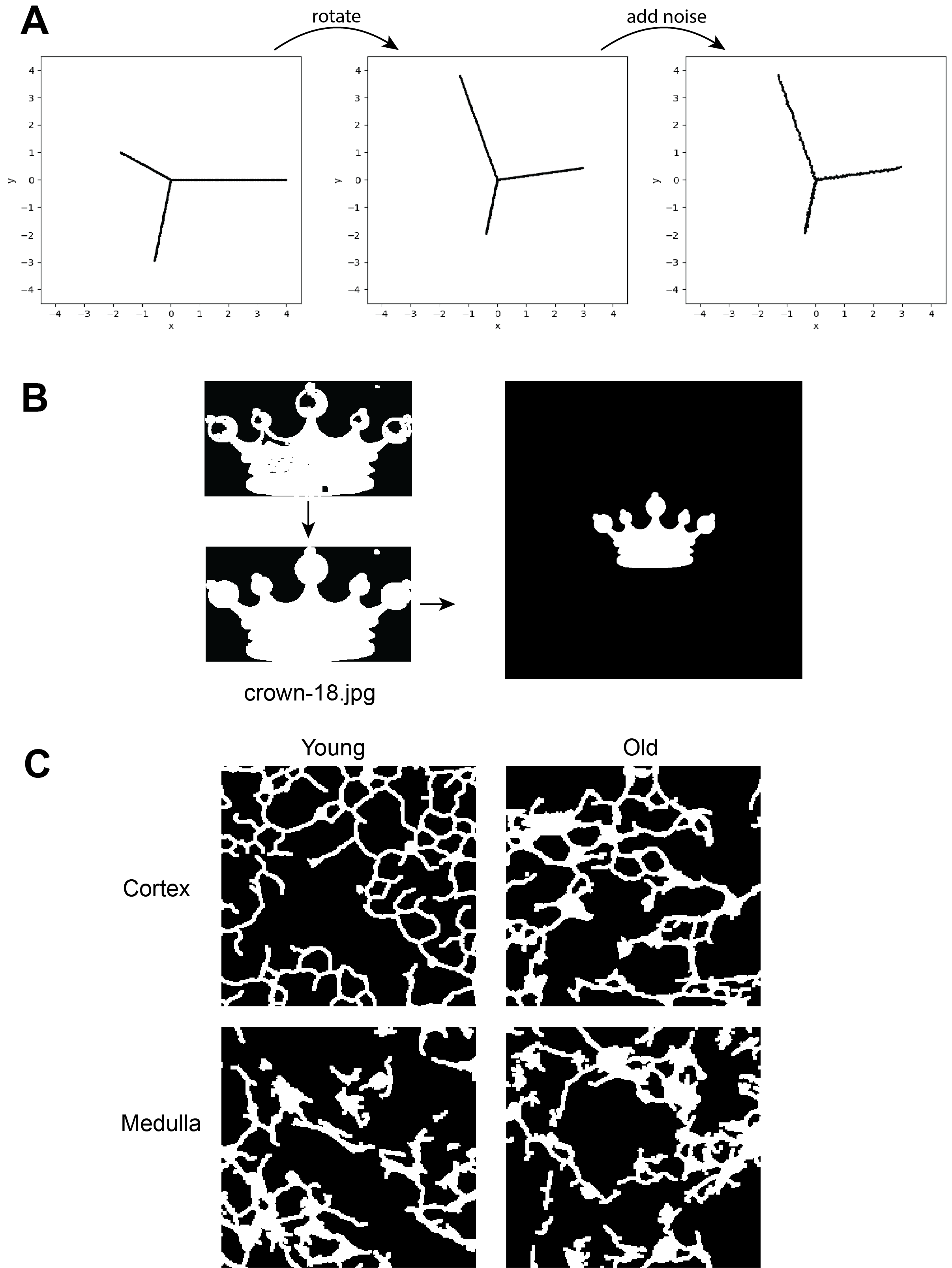}
\caption{\textit{A}: Pipeline for generating a class of tree samples. We first generate three edges of lengths 2, 3, and 4 from the origin, randomly oriented. We discretize each edge by sampling 100 equally spaced points along it and connecting consecutive points with edges. For each sample in the class, we randomly rotate the tree around the origin and then add coordinate-wise Gaussian noise of $\sigma = 0.02$ to each vertex. We keep the original connecting order of the edges. \textit{B}: Preprocessing pipeline of MPEG-7 Core Experiment Shape-Matching dataset. For each image, we first expand all white connected components by 1 pixel in all directions to close small gaps and filter to keep the largest connected component. We then fill in all the holes for the filtered image using \cite{2020SciPy-NMeth}. We pad black pixels around the original image to normalize all images to the same size, with the centroid of the connected component at the center of each padded image. \textit{C}: Four example image quadrants of the mouse thymus dataset, each of size 200 $\times$ 200 pixels. The pixels are of side length $0.5 \text{ }\mu \text{ m}$. From left to right, top to bottom, they are from young-cortex, old-cortex, young-medulla, and old-medulla.}
\label{figure: data_processing}
\end{figure}

\subsection{Thymus Experimental Procedure, Image Processing, and Cell Annotation}
Individual lobes from the thymi of two young wild type (C57BL/6) female mice (4 weeks of age) and five old C57BL/6 female mice (75-77 weeks of age) were collected, excess fat removed and frozen in optimal cutting temperature (OCT, TissueTek) polymer. Tissue sections were cut at 7 $\mu$m thickness and processed according to the Phenocycler-Fusion user guide (v2.1.0, Akoya Biosciences) using the Sample kit for Phenocycler-Fusion (Akoya Biosciences). Briefly, thymic tissue was fixed in acetone (Merck) for 10 minutes at room temperature, rehydrated in a hydration buffer (Akoya Biosciences) for 10 minutes and fixed in paraformaldehyde (PFA, 1.6\%, Thermo Fisher Scientific) for 10 minutes at room temperature. It was then equilibrated in a staining buffer (Akoya Biosciences) for 20 minutes and stained in a staining buffer containing four blockers (Akoya Biosciences) and antibodies barcoded with DNA oligonucleotides overnight at $4 ^\circ\mathrm{C}$. The tissue was washed with the staining buffer 10 minutes, fixed in 1.6\% PFA for 10 minutes, washed in PBS and exposed to ice-cold methanol (Merck) for 5 minutes. After rinsing in PBS, the tissue was fixed in a fixation solution (Akoya Biosciences) for 20 minutes at room temperature, washed in PBS and stored in a storage solution (Akoya Biosciences) until loading into the Phenocycler for cyclic hybridisation with fluorescently labelled complementary oligonucleotides \cite{CodexGrandma}. The barcoded antibodies listed in Table S4 were either from Akoya Biosciences or purchased from indicated providers and custom conjugated using a conjugation kit (Akoya Biosciences). Images were acquired using the Fusion slide scanner (Akoya Biosciences). The iterative imaging resulted in image stacks with 40 channels and a resolution of 0.5 $\mu$m/pixel.

Image analysis was performed in MATLAB (R2024b). Preprocessing involved restricting the regions of interest to the individual thymus sections, thereby removing artefacts. The background fluorescence was subtracted, and the fluorescent intensities of the markers were normalized across tissue samples. Cell segmentation was performed in two ways depending on the cell morphology. Round cell boundaries (e.g., immune cells) were outlined using Cellpose \cite{CellposeGrandpa}. Stromal cell boundaries (e.g., TEC, fibroblasts, endothelial cells) have a more complex morphology and were therefore customly segmented using MATLAB's multithresh() and bwskel() functions (Image Processing Toolbox), followed by pseudo-cellularization into 25x25 pixel sized tiles. For all cells and pseudo cells, centroid coordinates, area and average fluorescence intensities were exported. Cells with outlier fluorescence intensities were removed, and the fluorescence intensity of each marker was then scaled between 0 and 1. This scaling enabled the selection of marker intensity thresholds which were used to phenotype cells (see \Cref{CODEXphenotypes}). For the positional analysis of the DN3 T cells, first the concave hulls of all samples were identified using the R(V4.5.1) package concaveman (V1.2.0). Next, sf (V1.0-22) was used to compute the shortest distance within the first 500 $\mu$m of all DN3 centroids to the capsule. Those distances were aggregated across all mice, regardless of age, and compared using base R's implementation of the \texttt{t.test} and the \texttt{Wilcox.test}.

\section{Analysis Methods}

In this section, we introduce the necessary background on topological transforms and theoretical extensions in the first section, and then in the second section, we describe other analysis methods used in this study. 

\subsection{Topological Transforms}
We recall the Euler characteristic transform (ECT) and then present a theoretical extension (\Cref{theorem: ECT continuity} and \Cref{theorem: ect_pushforward_measure_stability}). Next we introduce SampEuler and present desirable theoretical and practical properties of this descriptor for analysing shape datasets (\Cref{theorem: SampEuler convergence}).

\subsubsection*{The Euler Characteristic Transform}
The Euler characteristic transform (ECT) is one of the foundational tools within the Topological Data Analysis community \cite{wang2021statistical, marsh2022detecting, kirveslahti2024digital, roell2023differentiable}.

In this section, we introduce the basic concepts of ECT for geometric simplicial complexes in order to describe our analysis methods. See \cite{vdD} and \cite{curry2022many} for a more general setting of ECT.

\begin{definition}
    A \textbf{geometric $k$-simplex} $K_1$ is the convex hull of $k+1$ affinely independent points $v_0,v_1,\dots, v_k$ and denoted by $[v_0,\dots, v_k]$. For any $\{u_0,\dots,u_j\}\subset\{v_0,\dots,v_k\}$, we call $[u_0,\dots,u_j]$ a \textbf{face} of $v_0,v_1,\dots, v_k$. The \textbf{dimension} of $K_1$ is $\dim(K_1) = k$.

    A geometric simplicial complex $K$ is a finite collection of geometric simplices $\{K_1,\dots,K_n\}$ such that
    \begin{enumerate}
        \item For every $K_i \in K$, every face of $K_i$ is also an element of $K$;
        \item For any $K_i, K_j \in K$, either $K_i \cap K_j = \emptyset$ or $K_i \cap K_j$ is a face of both $K_i$ and $K_j$.
    \end{enumerate}
    The \textbf{dimension} of $K$ is defined as $\max\{\dim(K_i):i\in\{1\dots n\}\}$.
\end{definition}

\begin{definition}
    For a geometric $k$-simplex $K_1$, the Euler characteristic of $K_1$, denoted by $\chi(K_1)$, is defined to be $\chi(K_1):=(-1)^k$. For a geometric simplicial complex $K$ consisting of the finite collection $\{K_1,\dots,K_n\}$, the Euler characteristic of $K$ is defined as $\chi(K) = \sum_{i=1}^n \chi(K_i)$.
\end{definition}

\begin{example}
    For any discretization $K$ of the unit circle, there are $n$ vertices ($0$-simplices) and $n$ edges ($1$-simplices) forming a closed cycle for some integer $n$. Therefore, the Euler characteristic is $\chi(K) = n - n = 0$ for any such discretization $K$.
\end{example}

\begin{definition}
    For geometric simplicial complex $K$ consists of simplices $K_1,\dots, K_n$ of dimension $d$, the \textbf{Euler characteristic transform} of $K$, denoted by $\ect(K): S^{d-1}\times\R\rightarrow\Z$ is defined as: 
        $$\ect(K)(v,t) = \chi(K^{\langle v, \cdot\rangle\leq t}),$$
    where $K^{\langle v, \cdot\rangle\leq t} = \{K_i: \langle v, x\rangle\leq t, \forall x\in K_i\}$. Fixing direction $v$, the curve $\ect(K)(v,-):\R\rightarrow\Z$ is the \textbf{Euler characteristic curve (ECC)} of $K$ along direction $v$.
\end{definition}

We now briefly recall the notion of persistent homology and persistence diagrams; see \cite{turner2014persistent, curry2022many} for details in the context of topological transforms.

Given a topological space $X$, its $k$-th homology (with coefficients in a field) is a vector space whose dimension counts the number of independent $k$-dimensional ``holes'' of $X$. By $0$-dimensional holes we mean connected components, by $1$-dimensional holes we mean cycles or loops, and by $2$-dimensional holes we mean enclosed voids. These $k$-dimensional holes are often called \emph{homological features} of $X$.

The idea of persistent homology is to track how these homological features appear and disappear as the simplicial complex grows through a one-parameter filtration. Given a sequence of complexes $(K_t)_{t \in \R}$ such that $K_t\subset K_r$ for all $t\leq r$, each homological feature is born at some parameter value $a \in \R$ and dies at some $b > a$. The pair $(a,b)$ records the birth and death of that feature. The multiset of all such pairs $\{(a_i, b_i)\}$ is called the \emph{persistence diagram}. Points far from the diagonal $\{a = b\}$ correspond to long-lived, and hence typically more significant, topological features, while points close to the diagonal represent short-lived features that are often attributed to noise.

In the context of topological transforms, for a fixed direction $v \in S^{d-1}$, the height filtration $K^{\langle v, \cdot \rangle \leq t}$ gives rise to a one-parameter family of subcomplexes of $K$ from which we can compute a persistence diagram for each homological degree.
\begin{definition}
    For geometric simplicial complex $K$, the \textbf{persistent homology transform} of $K$ denoted by $\text{PHT}(K): S^{d-1}\rightarrow \mathrm{Dgm}^d$ is defined as:
        $$\text{PHT}(K)(v) = (\text{PH}_0(K,v),\dots, \text{PH}_{d-1}(K,v))$$
    where $\mathrm{Dgm}$ is the space of persistence diagrams, and $\text{PH}_i(K,v)$ is the $i^{\text{th}}$ persistent homology of $K$ induced by the height filtration along direction $v$.
\end{definition}

We use $\CF$ to denote the space of all piecewise constant functions from $\R$ to $\Z$ as an integer combination of half-closed and half-open intervals. The ECCs resulting from ECTs are within $\CF$. 

\begin{theorem}[Continuity and Hausdorffness \cite{curry2022many}]\label{continuity-and-t2}
    Given a geometric simplicial $K\subset\R^d$, if the ECT of $K$ is viewed as $\ect(K): S^{d-1}\rightarrow \CF$, a map from the space of directions to the space of ECCs, then $\ect(K)$ is continuous with normal metric on $S^{d-1}$ and $L_p$ metric on $\CF$ with $1\leq p < \infty$. Furthermore, the space $\CF$ is a Hausdorff space.
\end{theorem}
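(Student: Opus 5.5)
We prove continuity by writing each Euler characteristic curve as a finite signed sum of indicator functions of half-lines, one for each simplex. For a fixed direction $v$, a simplex $K_i$ enters the sublevel complex $K^{\langle v,\cdot\rangle\leq t}$ exactly when $t\geq h_i(v):=\max_{x\in K_i}\langle v,x\rangle$. Since $K_i$ is the convex hull of its vertices, $h_i(v)$ is the maximum of $\langle v, w\rangle$ over the vertices $w$ of $K_i$. Hence
\[
\ect(K)(v,\cdot)=\sum_{i=1}^n (-1)^{\dim K_i}\,\1_{[h_i(v),\infty)} .
\]
This expression lies in $\CF$. It is not in $L_p(\R)$ itself, but the difference of two such curves is a signed sum of indicators of bounded intervals, so the $L_p$ distance between curves is finite and continuity makes sense.

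Each $h_i$ is a maximum of finitely many linear functions of $v$, so it is Lipschitz on $S^{d-1}$ with constant $R:=\max_{w}\|w\|$, the maximum taken over the vertices of $K$. For two directions $v,v'$ we have
\[
\bigl|\1_{[h_i(v),\infty)}-\1_{[h_i(v'),\infty)}\bigr| = \1_{I_i},
\]
where $I_i$ is the interval between $h_i(v)$ and $h_i(v')$, of length at most $R\|v-v'\|$. By Minkowski's inequality,
\[
\|\ect(K)(v,\cdot)-\ect(K)(v',\cdot)\|_{L_p}\leq \sum_i \|\1_{I_i}\|_{L_p}\leq n\,(R\|v-v'\|)^{1/p}.
\]
This tends to $0$ as $v'\to v$, for every $1\leq p<\infty$, which gives continuity (in fact Hölder continuity). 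The restriction $p<\infty$ enters here: in $L_\infty$ the indicator of a short interval has norm $1$, so continuity fails there.

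For the Hausdorff claim, we give $\CF$ the $L_p$ pseudometric $d(f,g)=\|f-g\|_{L_p}$. This pseudometric is finite on $\CF$ once we restrict to functions that are eventually equal outside a bounded set, as ECCs of a fixed $K$ are, or we take the metric on differences. A pseudometric space is Hausdorff precisely when the pseudometric separates points. So we must show that $\|f-g\|_p=0$ forces $f=g$ for $f,g\in\CF$.

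The main obstacle is this last step. An $L_p$ norm only detects functions up to null sets, so we cannot conclude $f=g$ pointwise unless $\CF$ is read with a convention that fixes values at breakpoints. The plan is to use the half-open convention built into the definition of $\CF$, with intervals of the form $[a,b)$. Then $f-g$ is a finite integer combination of indicators of half-open intervals, hence piecewise constant and right-continuous. If $\|f-g\|_p=0$, then $f-g$ vanishes almost everywhere, hence on every open interval of constancy. Right-continuity then gives $(f-g)(t)=0$ at each breakpoint as well, so $f=g$. With this established, $d$ is a genuine metric on $\CF$, and every metric space is Hausdorff.

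If mixed closed and open conventions were allowed, we would instead pass to the quotient by almost-everywhere equality, or to right-continuous representatives, as in \cite{curry2022many}.
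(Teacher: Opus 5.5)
The paper gives no proof of this statement; it imports it from Curry, Mukherjee and Turner. Your proposal is therefore an independent argument, and it is correct.

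\textbf{Continuity.} Your route is more elementary and more quantitative than simply citing the result. You decompose the curve over simplices, $\ect(K)(v,\cdot)=\sum_i(-1)^{\dim K_i}\1_{[h_i(v),\infty)}$, rather than over lower stars of vertices as in \Cref{lemma: alternativeECT}. That is the right choice here: the lower-star coefficients $\chi(K^{(x,v)})$ jump whenever two vertices exchange heights, whereas the simplex heights $h_i$ are $R$-Lipschitz in $v$. This gives you an explicit H\"older modulus $n(R\|v-v'\|)^{1/p}$, which also shows exactly where $p<\infty$ is needed. The citation buys generality (the reference works with constructible sets, not only geometric simplicial complexes) but supplies no explicit constant.

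\textbf{Hausdorffness.} You identify the real issue, which the paper's statement glosses over. On piecewise-constant functions the $L_p$ distance is only an extended pseudometric. It separates points only after a convention at breakpoints is fixed. Your right-continuous $[a,b)$ convention is the one consistent with the closed sublevel complexes $K^{\langle v,\cdot\rangle\leq t}$ used to define the ECC. Your argument is sound: $f-g$ vanishes a.e., hence on each open interval of constancy, and then at each breakpoint by right-continuity.

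Your caveat about mixed conventions is warranted. If the paper's phrase ``half-closed and half-open intervals'' allows both $[a,b)$ and $(a,b]$, the Hausdorff claim fails as stated. Indeed, $\1_{[a,b)}-\1_{(a,b]}=\1_{\{a\}}-\1_{\{b\}}$ is nonzero but has zero $L_p$ norm.

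One small simplification: you do not need to restrict to functions that agree outside a bounded set, or to ``take the metric on differences.'' An extended metric that separates points already induces a Hausdorff topology, since radius-$1$ balls around two points at infinite distance are disjoint by the triangle inequality.
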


We fix $p=1$ and adopt the $L_1$ metric for $\CF$ below, but the results hold true for any $1\leq p <\infty$ unless stated explicitly. 

\begin{theorem}[Injectivity theorem \cite{curry2022many, ghrist2018persistent}]
    Consider two geometric simplicial complexes $K$ and $L$. We have $\ect(K) = \ect(L)$ if and only if $K = L$.
\end{theorem}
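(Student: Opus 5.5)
The direction "$K = L \Rightarrow \ect(K)=\ect(L)$" is immediate, since the ECT is computed from the simplices of $K$ alone. The content lies in the converse. The plan is to pass from Euler characteristic curves to the Euler calculus of constructible functions and use the Schapira inversion formula. Both $K$ and $L$ define constructible functions $\1_K$ and $\1_L$ on $\R^d$ (indicators of the underlying polyhedra, which are definable in the o-minimal structure of semialgebraic sets). For a direction $v$, the curve $\ect(K)(v,\cdot)$ records $\int_{\langle v,x\rangle\le t}\1_K\,d\chi$. From it we recover the Euler integral of $\1_K$ over each slab $\{s<\langle v,x\rangle\le t\}$ by taking differences. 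Taking limits, or differences at jump points, we also recover the Euler integral over each hyperplane slice $\{\langle v,x\rangle = t\}$.

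First, we show that $\ect(K)$ determines the Radon transform $\mathcal{R}\1_K(H)=\int_H \1_K\,d\chi$ for every affine hyperplane $H$. Here we use that the ECC is piecewise constant with finitely many jumps (it lies in $\CF$). We combine this with additivity of the Euler integral and the fact that the sublevel and strict-sublevel sets are compact and open definable sets respectively, so their Euler characteristics differ exactly by the slice contribution. Second, we apply Schapira's inversion theorem: the dual Radon transform satisfies $\mathcal{R}^{*}\mathcal{R}f=(\chi(\mathbb{RP}^{d-2})-\chi(\mathbb{RP}^{d-1}))f+\chi(\mathbb{RP}^{d-1})\big(\int f\,d\chi\big)\1_{\R^d}$, where the coefficient $\chi(\mathbb{RP}^{d-2})-\chi(\mathbb{RP}^{d-1})=\pm1$ is nonzero. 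The total Euler integral $\int \1_K\,d\chi=\chi(K)$ is itself read off from any ECC at large $t$. Hence $\1_K$ is determined by $\ect(K)$, so $|K|=|L|$ as subsets of $\R^d$.

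The main obstacle is the final step: equality of the underlying polyhedra does not by itself give equality of the simplicial complexes, since $K$ and $L$ may triangulate the same set differently. I would handle this by interpreting "$K=L$" as equality of the underlying spaces, which is the convention under which the cited results are stated. If a combinatorial statement were required, it would be false, because subdividing an edge preserves every ECC. So I would state explicitly that equality is meant as subsets of $\R^d$. A secondary technical point is checking that the ECC defined via closed simplices entirely below height $t$ agrees with $\chi$ of the sublevel set of the polyhedron. I would verify this by noting that both sides are computed from the same compact sublevel set up to the convention at the jump values; since only the jump data and one-sided values enter the Radon transform, this convention does not affect the argument.
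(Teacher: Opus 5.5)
Your overall route (recover the hyperplane Radon transform of $\1_{|K|}$ from the ECT, then invert it with Schapira's formula) is the argument of the references the paper cites; the paper itself gives no proof. You are also right that ``$K=L$'' must mean $|K|=|L|$, since subdividing an edge changes no ECC. The gap is in how you obtain the slice Euler characteristics. You claim that $\chi(|K|\cap\{\langle v,x\rangle=t\})$ can be read off the single curve $E=\ect(K)(v,\cdot)$ by limits or jump differences. No such recipe exists. The segment $[0,1]\times\{0\}$ and the origin in $\R^2$ both have $E=\1_{[0,\infty)}$ for $v=e_1$, yet their slices at $t=\tfrac12$ have Euler characteristics $1$ and $0$. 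What $E(t)-E(t^-)$ actually measures is $\chi$ of the slice at $t$ minus $\chi$ of the generic slice just below $t$. This is because, in the slab $\{t-\epsilon<\langle v,x\rangle\le t\}$, the open part contributes $\chi_c$ of an open interval ($=-1$) times the Euler characteristic of the generic fibre. Your additivity argument conflates two invariants of the open set $|K|\cap\{\langle v,x\rangle<t\}$. The ECC gives its homotopy Euler characteristic $E(t^-)$, while additivity holds for the compactly supported $\chi_c$, and these differ (e.g.\ $[0,\tfrac12)$ has $\chi=1$ but $\chi_c=0$).

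The missing ingredient is the antipodal direction, which the ECT supplies. The compact polyhedra $|K|\cap\{\langle v,x\rangle\le t\}$ and $|K|\cap\{\langle v,x\rangle\ge t\}$ cover $|K|$ and intersect in the slice, so inclusion--exclusion gives
\[
\chi\bigl(|K|\cap\{\langle v,x\rangle=t\}\bigr)=\ect(K)(v,t)+\ect(K)(-v,-t)-\chi(K)
\]
for every $(v,t)$. This requires the subcomplex-based ECC to equal $\chi(|K|\cap\{\langle v,x\rangle\le t\})$ exactly at every $t$, not merely ``up to convention at jump values''. That does hold: the sublevel set deformation retracts, along straight segments inside each simplex, onto the full subcomplex spanned by the vertices of height at most $t$. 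With this identity the Radon transform, and hence $\1_{|K|}$, is determined by $\ect(K)$, and your Schapira step finishes the proof. One minor correction: Schapira's formula reads $\mathcal{R}^*\mathcal{R}f=(\lambda-\mu)f+\mu\bigl(\int f\,d\chi\bigr)\1_{\R^d}$ with $\lambda=\chi(\mathbb{RP}^{d-1})$ and $\mu=\chi(\mathbb{RP}^{d-2})$. You interchanged the two, which is harmless because $\lambda-\mu=\pm1$ and $\int f\,d\chi=\chi(K)$ is known.
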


The injectivity result shows that ECT can capture all geometric and topological information of the input complex. However, some differences between various complexes may not be of interest in analysis. For instance, a nonsymmetric complex $K$ before and after applying a nontrivial rotation would generate two different ECT results.

As suggested by \cite{kendall1999shape}, the shape of a geometric simplicial complex $K$ is the equivalence class of $K$ up to rigid motions, i.e., translations, rotations, and reflections. Centring the complex $K$ such that the sum of all vertices of $K$ is at the origin enables rapid alignment of shapes up to translations. Throughout this paper, we assume the input complex is centred.  However, no computationally cheap alignments can be performed for isometries.

\begin{definition}
    Given two geometric simplicial complexes $K$ and $L$, we write $K\sim L$ if and only if there exists a rigid motion $\phi$ such that $\phi(K) = L$. The relation $\sim$ is an equivalence relation, and we define the \textbf{shape class} of $K$ as the equivalence class $[K]$.
\end{definition}

We use the following definitions from \cite{curry2022many}.

\begin{definition}
    A geometric simplicial complex $K$ in $\R^d$ with vertex set $X$ is \deff{strongly generic} if:
    \begin{enumerate}
        \item ECCs are distinct for all directions $v\in S^{d-1}$,
        \item the vertex set $X$ is in general position, i.e., no subset of $k$ points in $X$ lie on a $(k-2)$-dimension affine subspace, for $k = 2,3,\dots, d+1$.
    \end{enumerate}
\end{definition}

\begin{definition}
    For strongly generic geometric simplicial complexes $K\subset\R^d$, the \deff{ECT pushforward measure} of $K$ is $\ect(K)_*\mu: \mathcal{B}(\CF)\rightarrow \R$, such that $\ect(K)_*\mu(E) = \mu(\{v\in S^{d-1}: \ect(K)(v,-)\in E\})$, where $\mu$ is the uniform measure on the sphere and $\mathcal{B}(\CF)$ is the Borel $\sigma-$algebra on the space of ECCs.
\end{definition}

\begin{theorem}[Theorem 6.7 of \cite{curry2022many}]\label{pushforward-measure}
    Let $K$ and $L$ be strongly generic geometric simplicial complexes in $\R^d$. Let $\mu$ be the Lebesgue measure on $S^{d-1}$. If the pushforward measures $\ect(K)_*(\mu) = \ect(L)_*(\mu)$, then there is some $\phi\in O(d)$ such that $L = \phi(K)$, i.e.\ $L$ and $K$ are identical up to a combination of rotations and reflections.
\end{theorem}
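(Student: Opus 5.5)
The plan is to turn equality of the two measures into an explicit homeomorphism of the sphere of directions, show that this homeomorphism is orthogonal, and finish with the Injectivity theorem.

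\emph{Equal measures give equal images.} By \Cref{continuity-and-t2}, $v\mapsto \ect(K)(v,-)$ is a continuous map $S^{d-1}\to\CF$, and $\CF$ is Hausdorff. Strong genericity (i) says this map is injective. A continuous injection from a compact space into a Hausdorff space is an embedding, so $\ect(K)$ maps $S^{d-1}$ homeomorphically onto a compact set $C_K\subset\CF$, and likewise for $L$. The uniform measure $\mu$ has full support on $S^{d-1}$, and the image is closed, so the support of $\ect(K)_*\mu$ is exactly $C_K$. Hence $\ect(K)_*\mu=\ect(L)_*\mu$ forces $C_K=C_L$. We can then define
\[
\psi=\ect(L)^{-1}\circ\ect(K):S^{d-1}\to S^{d-1}.
\]
This is a measure-preserving homeomorphism with $\ect(L)(\psi(v),-)=\ect(K)(v,-)$ for every $v$.

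\emph{Reading heights off the curves.} For fixed $v$, the ECC $\ect(K)(v,-)$ can only jump at heights $\langle v,x\rangle$ with $x$ a vertex of $K$. The jump there equals the Euler characteristic of the simplices whose highest vertex is $x$, i.e.\ of the lower star of $x$. By general position (ii), for $v$ off a finite union of great spheres the vertex heights are pairwise distinct and the combinatorics of the lower stars are locally constant. So on each open chamber $U$ the curve is determined by the set of functions $v\mapsto\langle v,x\rangle$ for the vertices with nonzero jump, together with their jump values. Since $\ect(L)(\psi(v),-)=\ect(K)(v,-)$, the jump locations agree. After shrinking $U$ so that the matching of jumps is constant, we get for each such vertex $x$ of $K$ a vertex $y$ of $L$ with
\[
\langle v,x\rangle=\langle\psi(v),y\rangle \quad\text{for all } v\in U.
\]

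\emph{Linearity of $\psi$.} If the vertices of $L$ carrying nonzero jumps on $\psi(U)$ include $d$ linearly independent vectors (using centring and general position), these identities determine $\psi(v)$ as $Av$ for a fixed linear map $A$ on $U$. Because $\psi$ maps into the sphere, $|Av|=1$ on an open subset of $S^{d-1}$, which forces $A\in O(d)$. Next I would propagate $\psi=A$ from $U$ to all of $S^{d-1}$. Across chamber walls the height functions are real-analytic and $\psi$ is continuous, and $S^{d-1}$ is connected for $d\ge 2$ (the case $d=1$ is a direct check). So the locally linear pieces must glue to a single orthogonal $A$. Measure preservation and the compatibility $\ect(K)(-v,t)$ versus $\ect(K)(v,-t)$ can serve as consistency checks here. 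Setting $\phi=A^{-T}=A$ gives $\ect(L)(Av,-)=\ect(K)(v,-)=\ect(\phi^{-1}\!\cdot)$, i.e.\ $\ect(L)=\ect(\phi(K))$. The Injectivity theorem then gives $L=\phi(K)$.

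\emph{Main obstacle.} The hardest step is the second one: showing the ECC really remembers enough vertex heights. Vertices whose lower star has Euler characteristic $0$ leave no jump, so a priori only a subset of vertices is visible and may not span $\R^d$. I would first try to argue that, as $v$ ranges over all chambers, every extreme vertex of the convex hull becomes the minimum in some open set of directions. There its jump is $+1$, the birth of the first component. Since $K$ is centred and in general position, these hull vertices span $\R^d$, which gives enough linear constraints to pin down $A$ on a neighbourhood; continuity then does the rest.
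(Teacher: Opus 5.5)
The paper does not prove this statement; it imports it from \cite{curry2022many}, so your argument has to stand on its own. Your first two steps are sound, and so is the closing appeal to the Injectivity theorem. Equal supports give $C_K=C_L$, and $\psi$ is a $\mu$-preserving homeomorphism. On each open piece where $v$ avoids the walls of $K$ and $\psi(v)$ avoids those of $L$, the order of the jumps forces the matching and yields $\langle v,x\rangle=\langle\psi(v),y\rangle$.

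The gap is the linearity step. It needs $d$ linearly independent jump-carrying vertices on a \emph{single} such piece, and your proposed remedy does not supply them. In a generic direction exactly one vertex is lowest, so a hull vertex contributes its $+1$ jump only on its own normal cone. That gives one equation per piece. Equations on different pieces cannot be assembled into one matrix $A$ unless you already know $\psi$ is linear across those pieces, which is what is being proved.

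The scarcity of visible vertices is real. Each of the following has lower-star Euler characteristic $0$: a path vertex with one lower and one upper neighbour, a tree leaf lying above its neighbour, and the top vertex of a triangulated convex polygon. (One full-rank piece can be obtained differently. Let $x^*$ be a vertex of maximal norm; it is the unique lowest vertex near $u=-x^*/|x^*|$. Suppose the visible vertices on the chamber containing $u$ spanned a proper subspace $W\ni x^*$. Then the reflection in a hyperplane containing $W$ fixes $u$ and preserves every visible height, so two distinct nearby directions would have the same ECC, contradicting (i). This works at least when $u$ is not on a wall.)

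Even granting one full-rank piece, the propagation step does not go through as written. Gluing ``locally linear pieces'' presupposes that $\psi$ is linear on \emph{every} piece. On a piece whose visible vertices span a proper subspace $W$, the identities only constrain the component of $\psi(v)$ along the span of the matched $y$'s. For instance, take $\dim W\le d-2$, let $R_\theta$ be a rotation of $W^\perp$, and let $\theta$ be non-constant. Then $v\mapsto \mathrm{proj}_W v+R_{\theta(\mathrm{proj}_W v)}\,\mathrm{proj}_{W^\perp}v$ is a $\mu$-preserving homeomorphism of $S^{d-1}$ that preserves every height $\langle v,x\rangle$ with $x\in W$, yet it is not linear. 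So continuity, connectedness, the height identities and measure preservation together cannot force linearity there. You need a further ingredient, for example matching the wall structures of $K$ and $L$, or using how the invisible vertices still affect which vertices are visible in neighbouring chambers.

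A smaller point concerns two adjacent pieces on which $\psi$ is orthogonal, with maps $A_1$ and $A_2$. Continuity across a wall spanning a hyperplane $H$ only gives $A_2\in\{A_1,A_1R_H\}$, where $R_H$ is the reflection in $H$. It is the injectivity of $\psi$, not real-analyticity, that rules out the fold $A_1R_H$.
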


This shows that the ECT pushforward measure characterizes the shape classes of the complexes completely. Based on the ECT pushforward measure, Marsh et al.\ defined the Detecting Temporal shape changes with the Euler Characteristic Transform (DETECT) algorithm and applied it successfully to biological datasets \cite{marsh2022detecting}. 

\begin{definition}
    Given a geometric simplicial complex $K\subset\R^d$, and suppose that the norms of all vertices are bounded within interval $[-a, a]$. The DETECT curve of $K$ is defined as: 
        \begin{align*}
            & \text{DETECT}(K): \R \rightarrow \R \\
            & t\rightarrow \int_{S^{d-1}}\int_{-a}^t \ect(K)(v, s) - \overline{\ect(K)(v,-)} \d s \d v
        \end{align*}
    where $\overline{\ect(K)(v,-)} = \int_{-a}^a \ect(K)(v, s) \d s$.
\end{definition}

Stability properties are important for data analysis. We first introduce some metrics and prove the stability property for the ECT pushforward measure in \Cref{theorem: ect_pushforward_measure_stability}. 

\begin{definition}
    Let $(M,d)$ be a metric space and $\mathcal{P}_p(M)$ be the collection of all probability measures $P$ such that $\int_M d(x, x_0)^p \text{d}P(x)<\infty$ for some $x_0$. The $p^{th}$ Wasserstein distance for $P, Q\in \mathcal{P}_p(M)$ is defined as: 
        $$W_p(P,Q) := \left(\inf_{\gamma\in\Gamma(P,Q)}\left\{\int_{M\times M} d(x,y)^p \text{d}\gamma(x,y)\right\}\right)^{1/p}$$
    where $\Gamma(P,Q)$ is the set of all measures $\mu$ on $M\times M$ such that $\mu(A\times M) = P(A)$ and $\mu(M\times B) = Q(B)$.
\end{definition}

Let $\Delta = \{(x, x) : x \in \mathbb{R}\}$ denote the diagonal in $\mathbb{R}^2$. For a point $(b, d) \in \mathbb{R}^2$, define its projection onto the diagonal as
$$ \pi_\Delta(b, d) = \left(\frac{b + d}{2}, \frac{b + d}{2}\right). $$

\begin{definition}[Matching]
Given two persistence diagrams $D$ and $D'$, a \emph{matching} between $D$ and $D'$ is a map
$$ \gamma : D \cup \Delta \to D' \cup \Delta $$
where the diagonal $\Delta$ is taken with infinite multiplicity, such that:
\begin{enumerate}
    \item Each point $x \in D$ is either matched to a point $\gamma(x) \in D'$, or matched to its projection $\pi_\Delta(x) \in \Delta$;
    \item Each point $y \in D'$ is either matched to some point in $D$, or matched to its projection $\pi_\Delta(y) \in \Delta$.
\end{enumerate}
We denote the set of all such matchings by $\Gamma(D, D')$.
\end{definition}

\begin{definition}\cite{cohen2010lipschitz}
For $p \in [1, \infty]$, the \emph{$p$-Wasserstein distance} between persistence diagrams $D$ and $D'$ is defined as
$$ W_{p}(D, D') = \left( \inf_{\gamma \in \Gamma(D, D')} \sum_{x \in D \cup \Delta} \|x - \gamma(x)\|_p^p \right)^{1/p}, $$
where $\|\cdot\|_p$ denotes the $\ell^p$-norm on $\mathbb{R}^2$.

If $p = \infty$, the Wasserstein distance is defined as:
    $$ W_{\infty}(D, D') = \inf_{\gamma \in \Gamma(D, D')} \max_{x \in D \cup \Delta} \|x - \gamma(x)\|_\infty. $$
\end{definition}

Recall the following definition taken from \cite{curry2022many}.

\begin{definition}
    Given two geometric simplicial complexes $K$ and $L$, the ECT metric between $K$ and $L$ is defined as:
        $$d_{\ect}^1(K, L) = \int_{S^{d-1}} \|ECT(K,v) - ECT(L,v)\|_1 \d v.$$
    The $\infty-$PHT distance in degree $i>0$ between $K$ and $L$ is defined as:
        $$d_{\infty}^{\text{PHT}^i}(K, L) = \max_{v \in S^{d-1}} W_{\infty}(\text{Dgm}_i(K,v), \text{Dgm}_i(L,v)).$$
    where $\text{Dgm}_i(K,v)$ and $\text{Dgm}_i(L,v)$ are the $i^{\text{th}}$ persistence diagrams of $K$ and $L$ induced by height filtration along direction $v$.
\end{definition}

\begin{theorem}[Proposition 2 of \cite{dlotko2023euler}]
    For geometric simplicial complexes $K$ and $L$ in $\R^d$, we have:
         $$ \|\ect(K,v) - \ect(L,v)\|_1 \leq \sum_{k=0}^{d-1} 2 n_k W_{\infty}(\text{Dgm}_k(K,v), \text{Dgm}_k(L,v)),$$
    where $\text{Dgm}_k(K,v)$ and $\text{Dgm}_k(L,v)$ are the $k^{\text{th}}$ persistence diagrams of $K$ and $L$ induced by height filtration along direction $v$, and $n_k$ is the maximum number of bars in $\text{Dgm}_k(K,v)$ or $\text{Dgm}_k(L,v)$.
\end{theorem}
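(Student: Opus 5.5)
The plan is to reduce the statement to a comparison of Betti curves, one homological degree at a time, and then to bound each Betti-curve difference using an optimal $W_\infty$ matching.

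\textbf{Step 1 (Euler--Poincar\'e).} For fixed $v$ and every $t$, the sublevel complex $K^{\langle v,\cdot\rangle\le t}$ satisfies
$$\chi\bigl(K^{\langle v,\cdot\rangle\le t}\bigr)=\sum_{k=0}^{d-1}(-1)^k\beta_k^K(t),$$
and similarly for $L$. Here $\beta_k^K(t)$ is the number of bars of $\text{Dgm}_k(K,v)$ containing $t$, so that $\beta_k^K=\sum_{[b,d)\in \text{Dgm}_k(K,v)}\1_{[b,d)}$. The triangle inequality in $L_1$ then gives
$$\|\ect(K,v)-\ect(L,v)\|_1\le \sum_k\|\beta_k^K-\beta_k^L\|_1 .$$
It therefore suffices to show $\|\beta_k^K-\beta_k^L\|_1\le 2n_k\,W_\infty(\text{Dgm}_k(K,v),\text{Dgm}_k(L,v))$ for each $k$.

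\textbf{Step 2 (matching).} Fix $k$ and set $\varepsilon=W_\infty$. If $\varepsilon=\infty$, for instance because the numbers of essential bars differ, there is nothing to prove. Otherwise, since the diagrams are finite, the infimum over matchings is attained by some $\gamma$. Essential bars are matched to essential bars with birth times within $\varepsilon$. We then expand both Betti curves as sums of indicators and group the terms along $\gamma$:
\begin{itemize}
\item A matched pair $[b,d)\leftrightarrow[b',d')$ contributes $\|\1_{[b,d)}-\1_{[b',d')}\|_1\le |b-b'|+|d-d'|\le 2\varepsilon$. For an essential pair only the birth term appears, giving at most $\varepsilon$.
\item A bar matched to the diagonal satisfies $\|x-\pi_\Delta(x)\|_\infty=(d-b)/2\le\varepsilon$, so it contributes its length $d-b\le 2\varepsilon$.
\end{itemize}
Summing these contributions bounds $\|\beta_k^K-\beta_k^L\|_1$ by $2\varepsilon$ times the number of groups.

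\textbf{Step 3 (counting), the main obstacle.} If $m$ bars are matched off-diagonal, $a$ bars of $D$ and $c$ bars of $D'$ go to the diagonal, then $m+a\le n_k$ and $m+c\le n_k$. The naive count $m+a+c$ can therefore reach $2n_k$, which gives only the constant $4n_k$.

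To reach $2n_k$, I would first try a sharper estimate for diagonal-matched bars. Each such bar is an interval of length at most $2\varepsilon$ in the symmetric difference. I would pair each unmatched bar of $D$ with an unmatched bar of $D'$, for at most $\max(a,c)$ pairs, and bound the $L_1$ norm of $\1_I-\1_J$ for such a pair by $2\varepsilon$ rather than $4\varepsilon$. This requires exploiting cancellation. One way is to re-choose the optimal matching so that two diagonal-matched bars which overlap are rematched to each other; by the $\ell_\infty$ cost this is still admissible, since both bars have length at most $2\varepsilon$ and can be shifted within $\varepsilon$. Disjoint pairs, however, genuinely contribute up to $4\varepsilon$. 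So if this rematching does not close the gap, I would fall back on reading $n_k$ as in Proposition 2 of \cite{dlotko2023euler}, namely as a bound on the number of matched groups, that is, the total number of bars involved in the matching, and check their convention.

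With the per-degree bound in hand, summing over $k$ as in Step 1 yields the stated inequality.
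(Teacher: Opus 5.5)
The paper gives no proof of this statement; it is quoted from \cite{dlotko2023euler} and used only in the proof of the ECT continuity theorem. Your Steps 1 and 2 are sound and already prove a correct per-degree estimate. Write $D=\text{Dgm}_k(K,v)$ and $D'=\text{Dgm}_k(L,v)$, and use your counts $m,a,c$. The number of nontrivial groups is $m+a+c=|D|+|D'|-m$, so
\[
\|\beta_k^K-\beta_k^L\|_1\le 2\bigl(|D|+|D'|\bigr)W_\infty(D,D')\le 4n_k\,W_\infty(D,D').
\]

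The genuine gap is Step 3, and no rematching can close it. Under the paper's own conventions (diagonal cost $\|x-\pi_\Delta(x)\|_\infty=(d-b)/2$, and $n_k$ the larger of the two bar counts) the stated inequality is false. Here is a counterexample.

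\begin{itemize}
\item Take $d=2$ and $v=(0,1)$.
\item Let $K$ be the tree with vertices $r=(0,-100)$, $p_1=(1,0)$, $q_1=(2,2)$, $p_2=(3,20)$, $q_2=(4,22)$ and edges $rq_1,p_1q_1,rq_2,p_2q_2$.
\item Let $L$ be the same tree with the second coordinates of $p_1,q_1,p_2,q_2$ increased by $10$.
\end{itemize}

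Both trees are embedded, since edges meet only at shared vertices. In the height filtration, each $p_i$ starts a component that merges into that of $r$ when $q_i$ enters. Hence
\[
\ect(K,v)=\1_{[-100,\infty)}+\1_{[0,2)}+\1_{[20,22)},\qquad \ect(L,v)=\1_{[-100,\infty)}+\1_{[10,12)}+\1_{[30,32)},
\]
and the left-hand side equals $8$.

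On the other side, both degree-one diagrams are empty, and
\[
\text{Dgm}_0(K,v)=\{(-100,\infty),(0,2),(20,22)\},\qquad \text{Dgm}_0(L,v)=\{(-100,\infty),(10,12),(30,32)\}.
\]
Matching the essential points costs $0$, and each finite point lies within $1$ of the diagonal. Any off-diagonal match of finite points costs at least $10$, so $W_\infty=1$. With $n_0=3$ and $n_1=0$, the right-hand side is $6<8$. Adding more well-separated pendant pairs gives $4N$ against $2(N+1)$, so the constant is off by a factor tending to $2$. This is exactly your disjoint-pairs obstruction: two diagonal-matched bars with disjoint supports really do cost $4\varepsilon$.

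So the inequality holds only in a corrected form, and there are two ways to get one.
\begin{itemize}
\item Replace $2n_k$ by $4n_k$, or by $2\bigl(|D|+|D'|\bigr)$. This is exactly what your Steps 1--2 prove, and it is your fallback reading of $n_k$ as the total bar count.
\item Keep $2n_k$ but take the diagonal cost to be the full length $d-b$. Then each diagonal bar contributes at most $W_\infty$, and the total is bounded by $(2m+a+c)W_\infty=(|D|+|D'|)W_\infty\le 2n_kW_\infty$.
\end{itemize}
Either way, you should present the fallback as correcting the statement, not as proving it as written.

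The correction is harmless for the paper. In the ECT continuity theorem it yields $4n\epsilon$, which is still at most the claimed $2dn\epsilon$ whenever $d\ge 2$.
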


\begin{definition}
    Let $K, L\subset \R^d$ be geometric simplicial complexes and let $\phi: K\rightarrow L$ and $\psi: L \rightarrow K$ be a homotopy equivalence of $K$ and $L$.
    Let $H_K: K\times I \rightarrow K$ and $H_L: L\times I \rightarrow L$ be homotopies from $\psi\circ \phi$ to $\id_K$ and from $\phi\circ \psi$ to $\id_L$ respectively. We denote by $\|\cdot\|_{\R^d}^2$ the normal Euclidean 2-norm. If there exists $\epsilon$ strictly positive such that $\|x - \phi(x)\|^2_{\R^d} \leq \epsilon$ and $\|y - \psi(y)\|^2_{\R^d}\leq\epsilon$ for all $x\in K$ and $y\in L$,
    as well as $\|x - H_K(x,s)\|^2_{\R_d}\leq 2\epsilon$ and $\|y - H_L(y,s)\|^2_{\R_d}\leq 2\epsilon$ for all $x\in K, y\in L, s\in I$,
    then we say $K$ and $L$ are $\epsilon-$controlled homotopic.
\end{definition}

\begin{theorem}[Lemma 4.5 and Theorem 4.11 of \cite{arya2025sheaf}]
    Let $K, L\subset \R^d$ be geometric simplicial complexes that are $\epsilon-$controlled homotopic. Then we have: 
        $$ \max_i d_{\infty}^{\text{PHT}^i}(K,L)\leq \epsilon.$$
\end{theorem}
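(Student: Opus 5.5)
The plan is to fix a direction $v\in S^{d-1}$ and show that the two height filtrations $\{K^{\langle v,\cdot\rangle\le t}\}_t$ and $\{L^{\langle v,\cdot\rangle\le t}\}_t$ have persistent homology modules that are interleaved, with the interleaving parameter controlled by $\epsilon$. The algebraic stability theorem for persistence modules then bounds $W_\infty(\mathrm{Dgm}_i(K,v),\mathrm{Dgm}_i(L,v))$ by that parameter. Since every step will be uniform in $v$, taking the maximum over $S^{d-1}$ gives the bound on $d_\infty^{\text{PHT}^i}$ for each $i$.

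\textbf{Step 1 (replace subcomplexes by sublevel sets).} For a linear height function $h_v=\langle v,\cdot\rangle$, the geometric sublevel set $|K|\cap\{h_v\le t\}$ deformation retracts onto the subcomplex $K^{\langle v,\cdot\rangle\le t}$ of simplices lying entirely below $t$. To see this, on each simplex straddling level $t$, push points away from the vertices above $t$ and onto the face spanned by the vertices below $t$; this is the standard piecewise-linear retraction. These retractions commute with the inclusions up to homotopy. Hence the persistence modules $H_i(K^{\langle v,\cdot\rangle\le t})$ and $H_i(|K|\cap\{h_v\le t\})$ are isomorphic, and I can work with the sublevel sets $K_t:=|K|\cap\{h_v\le t\}$ and $L_t$ of the continuous maps. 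I expect this step to be where most of the care is needed, because the retraction is not quite natural at critical values. If necessary I would fall back on the fact that both filtrations change homology only at vertex heights and agree there.

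\textbf{Step 2 (interleaving maps).} Let $\delta$ be the uniform bound on the displacements $\|x-\phi(x)\|$ and $\|y-\psi(y)\|$. By Cauchy--Schwarz, $|h_v(x)-h_v(\phi(x))|\le\|x-\phi(x)\|\le\delta$, so $\phi$ restricts to $K_t\to L_{t+\delta}$, and likewise $\psi$ restricts to $L_t\to K_{t+\delta}$. These restrictions commute with the inclusions on the nose.

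\textbf{Step 3 (interleaving identities).} The composite $\psi\circ\phi$ restricted to $K_t$ lands in $K_{t+2\delta}$. The homotopy $H_K$ moves each point by at most the stated bound, so $H_K(K_t\times I)\subset K_{t+2\delta}$. Therefore $\psi\circ\phi|_{K_t}$ is homotopic, inside $K_{t+2\delta}$, to the inclusion $K_t\hookrightarrow K_{t+2\delta}$, and symmetrically for $L$ using $H_L$. Applying $H_i$, the induced maps form a $\delta$-interleaving of the two persistence modules.

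\textbf{Step 4 (conclude).} The algebraic stability (isometry) theorem gives $W_\infty(\mathrm{Dgm}_i(K,v),\mathrm{Dgm}_i(L,v))\le\delta$. Maximising over $v$ yields $\max_i d^{\text{PHT}^i}_\infty(K,L)\le\delta$.

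The main bookkeeping obstacle is matching $\delta$ to $\epsilon$ in the definition. The definition bounds squared norms, which literally gives $\delta=\sqrt\epsilon$ and a homotopy displacement of $\sqrt{2\epsilon}\le 2\sqrt\epsilon$, consistent with the $2\delta$ requirement in Step 3. I would therefore read the hypothesis as bounding the displacement itself by $\epsilon$, as in the source \cite{arya2025sheaf}. Under that reading, Step 3 only needs homotopy displacement $\le 2\epsilon$, which is exactly what is assumed, and the bound $\epsilon$ follows.
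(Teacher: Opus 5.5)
Your argument is correct. The paper gives no proof of this statement; it quotes it from \cite{arya2025sheaf}. So your direction-by-direction argument is a self-contained replacement for the citation rather than an alternative to an argument in the text. You restrict $\phi,\psi,H_K,H_L$ to sublevel sets, which gives an interleaving of the height-filtration persistence modules uniformly in $v$, and then apply algebraic stability. This is the standard way to obtain such a bound.

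Your ``bookkeeping obstacle'' is settled by the paper's own wording. It says that $\|\cdot\|^2_{\R^d}$ denotes the Euclidean $2$-norm, so the superscript is a label, not a square. The hypotheses are therefore exactly $\|x-\phi(x)\|\le\epsilon$ and $\|x-H_K(x,s)\|\le 2\epsilon$, which is the reading you adopt. You were right not to read it as a square, because under that reading the statement is false. Translating any complex by a vector $a$ with $0<|a|<1$ satisfies the squared hypothesis with $\epsilon=|a|^2$ (take constant homotopies). But in direction $a/|a|$ every diagram is translated by $|a|$ along the diagonal, so the lowest essential $H_0$ class alone forces $d^{\text{PHT}^0}_\infty=|a|=\sqrt{\epsilon}>\epsilon$.

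Step 1 needs no special care at critical values. Compare the two filtrations through the inclusions $|K^{\langle v,\cdot\rangle\le t}|\hookrightarrow |K|\cap\{h_v\le t\}$, rather than through retractions. These inclusions commute with the structure maps exactly. They are levelwise homotopy equivalences by the piecewise-linear deformation retraction you describe, which never increases $h_v$. Hence they induce an isomorphism of persistence modules. Your proposed fallback, that the two filtrations ``agree'' at vertex heights, is not true as stated. A simplex straddling such a height contributes a partial piece to the sublevel set but nothing to the subcomplex. You do not need the fallback, though.
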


\begin{theorem}\label{theorem: ECT continuity}
    Let $K, L\subset \R^d$ be geometric simplicial complexes that are $\epsilon-$controlled homotopic, and let $n_k$ be the maximum number of bars in $\text{Dgm}_k(K,v)$ or $\text{Dgm}_k(L,v)$ for all $v\in S^{d-1}$, and let $n = \sum_{k=0}^{d-1} n_k$. Then we have:
        $$ d_{\ect}^1(K,L) \leq 2dn\epsilon.$$
\end{theorem}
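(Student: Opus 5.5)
The plan is to chain the two cited results pointwise in the direction $v$ and then integrate over the sphere. Fix $v\in S^{d-1}$. By Proposition 2 of \cite{dlotko2023euler}, we have
$$\|\ect(K,v)-\ect(L,v)\|_1 \leq \sum_{k=0}^{d-1} 2\, n_k(v)\, W_\infty(\text{Dgm}_k(K,v),\text{Dgm}_k(L,v)),$$
where $n_k(v)$ is the maximal number of bars in degree $k$ at direction $v$. By hypothesis, $n_k(v)\leq n_k$ uniformly in $v$, so we may replace $n_k(v)$ by $n_k$.

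Next we bound each bottleneck term uniformly in $v$. Since $K$ and $L$ are $\epsilon$-controlled homotopic, Lemma 4.5 and Theorem 4.11 of \cite{arya2025sheaf} give $\max_i d_\infty^{\text{PHT}^i}(K,L)\leq\epsilon$. Because $d_\infty^{\text{PHT}^i}$ is defined as a maximum over directions, this yields $W_\infty(\text{Dgm}_k(K,v),\text{Dgm}_k(L,v))\leq\epsilon$ for every $v$ and every $k$. One point needs care here: the definition of the PHT distance is stated for degree $i>0$, whereas we also need $k=0$. I would check that the cited theorem covers all degrees, including $0$, with the same argument. If it does not, the degree-$0$ case should follow from the same controlled-homotopy interleaving, since the maps $\phi$ and $\psi$ shift heights by at most $\epsilon$ in every direction, giving an $\epsilon$-interleaving of the sublevel persistence modules in each degree.

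Combining the two bounds gives, for every $v$,
$$\|\ect(K,v)-\ect(L,v)\|_1\leq 2\epsilon\sum_{k=0}^{d-1} n_k = 2n\epsilon.$$
Integrating over $S^{d-1}$ then gives $d_{\ect}^1(K,L)\leq 2n\epsilon\,\mu(S^{d-1})$. The stated constant $2dn\epsilon$ must therefore come from the normalization of the measure $\d v$. If $\d v$ is the uniform probability measure, the bound $2n\epsilon\leq 2dn\epsilon$ holds trivially since $d\ge1$. Alternatively, the factor $d$ may arise from a cruder bookkeeping in which each of the $d$ degrees is bounded by $2n\epsilon$.

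I expect the main obstacle to be not the chaining itself but this bookkeeping of constants and conventions. Three things need to be pinned down together: the normalization of the sphere measure, the use of $\|\cdot\|^2$ versus $\|\cdot\|$ in the definition of $\epsilon$-controlled homotopy, and the degree-$0$ coverage. Together they must produce the stated bound $2dn\epsilon$. I would fix the probability normalization on $S^{d-1}$ (consistent with $\mu$ being the uniform measure used for the pushforward) and state explicitly that the $W_\infty$ bound applies in all degrees $0,\dots,d-1$.
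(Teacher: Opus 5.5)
Your proposal is correct and follows essentially the same route as the paper: apply the pointwise ECC bound in terms of $W_\infty$ for each direction $v$, control every bottleneck term by $\epsilon$ via the controlled-homotopy result, and integrate over $S^{d-1}$ against the normalized uniform measure. The paper does the last step by bounding $\int_{S^{d-1}} W_\infty \,\mathrm{d}v$ by $\max_v W_\infty$. As you suspected, the argument actually gives $2n\epsilon$, so the factor $d$ is pure slack, and the paper passes over the degree-$0$ coverage and the measure normalization that you flagged without comment.
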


\begin{proof}
    By the definition of the ECT metric we have: 
        \begin{align*}
            d_{\ect}^1(K,L) & = \int_{S^{d-1}} \|\ect(K,v) - \ect(L,v)\|_1 \d v \\
            & \leq \int_{S^{d-1}} \sum_{k=0}^{d-1} 2n_k W_{\infty}(\text{Dgm}_k(K,v), \text{Dgm}_k(L,v)) \d v \\
            & = \sum_{k=0}^{d-1} 2n_k \int_{S^{d-1}} W_{\infty}(\text{Dgm}_k(K,v), \text{Dgm}_k(L,v)) \d v \\
            & \leq \sum_{k=0}^{d-1} 2n_k \max_{v\in S^{d-1}} W_{\infty}(\text{Dgm}_k(K,v), \text{Dgm}_k(L,v)) \\
            & = 2\sum_{k=0}^{d-1} n_k d_{\infty}^{\text{PHT}^k}(K,L) \leq 2dn\epsilon.
        \end{align*}
\end{proof}

\begin{remark}
    For a geometric simplicial complex, the number of bars in the $k$-th persistent homology is bounded by the number of $k$-simplices, which is finite. In particular, $n_k$ is at most the larger of the number of $k$-simplices in $K$ and $L$, and $n = \sum_{k=0}^{d-1} n_k$ is bounded accordingly.
\end{remark}

\begin{theorem}\label{theorem: ect_pushforward_measure_stability}
    The ECT pushforward measure equipped with the Wasserstein distance $W_1$ is Lipschitz continuous for homotopic equivalent complexes.
\end{theorem}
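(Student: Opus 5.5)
The strategy is to couple the two pushforward measures through the common source measure $\mu$ on $S^{d-1}$, and then bound the resulting transport cost by the ECT metric. \Cref{theorem: ECT continuity} then turns that metric into a bound linear in $\epsilon$. We interpret the statement as follows: for $\epsilon$-controlled homotopic complexes $K,L\subset\R^d$,
$$W_1\bigl(\ect(K)_*\mu,\ \ect(L)_*\mu\bigr)\leq C\, d^{1}_{\ect}(K,L)\leq 2Cdn\epsilon,$$
where $\mu$ is normalised to a probability measure. The constant is $C=1$ if $d^1_{\ect}$ is also taken with respect to the normalised measure, and $C=1/\mu(S^{d-1})$ otherwise. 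Here $n$ is the quantity from \Cref{theorem: ECT continuity}.

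First, we check that the objects are well defined. By \Cref{continuity-and-t2}, the maps $v\mapsto \ect(K)(v,-)$ and $v\mapsto\ect(L)(v,-)$ are continuous from $S^{d-1}$ into $(\CF,L_1)$, hence Borel measurable, so the pushforwards are Borel probability measures on $\CF$. Both have finite first moment. The images are compact, being continuous images of the compact sphere, so $\int \|f-f_0\|_1\,\mathrm{d}P(f)<\infty$ for any fixed $f_0$, for example $f_0=\ect(K)(v_0,-)$. Hence both measures lie in $\mathcal{P}_1(\CF)$ and $W_1$ makes sense. (Strong genericity plays no role in the stability bound; it only matters for injectivity via \Cref{pushforward-measure}.)

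Second, we construct the coupling. Define $\Phi:S^{d-1}\to\CF\times\CF$ by $\Phi(v)=(\ect(K)(v,-),\ect(L)(v,-))$ and set $\gamma=\Phi_*\mu$. Its marginals are exactly $\ect(K)_*\mu$ and $\ect(L)_*\mu$, so $\gamma\in\Gamma$. By the change of variables formula,
$$W_1\leq\int_{\CF\times\CF}\|f-g\|_1\,\mathrm{d}\gamma(f,g)=\int_{S^{d-1}}\|\ect(K)(v,-)-\ect(L)(v,-)\|_1\,\mathrm{d}\mu(v),$$
and the right-hand side is $d^1_{\ect}(K,L)$ up to the normalisation constant. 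Applying \Cref{theorem: ECT continuity} then gives the bound $2Cdn\epsilon$.

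The main obstacle is making \emph{Lipschitz} precise. The bound is linear in the controlled-homotopy parameter $\epsilon$, but the constant $n$ depends on the pair through the numbers of persistence bars. To make this a genuine Lipschitz statement, we restrict to a class of complexes with a uniform bound on the number of simplices. The remark after \Cref{theorem: ECT continuity} says $n_k$ is at most the number of $k$-simplices, so this bound controls $n$, and the Lipschitz constant $2Cdn$ is then uniform over the class. Equivalently, viewing the map $K\mapsto\ect(K)_*\mu$ as $1$-Lipschitz (up to normalisation) from $(\cdot,d^1_{\ect})$ to $(\mathcal{P}_1(\CF),W_1)$ gives a clean statement independent of the homotopy hypothesis. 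We will state both forms.
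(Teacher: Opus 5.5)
Your proof is correct and follows the paper's route: you couple the two pushforwards through the common source measure on $S^{d-1}$, bound $W_1$ by $d^1_{\ect}(K,L)$, and then invoke \Cref{theorem: ECT continuity}. The paper differs only in using the whole family of couplings $v\mapsto(\ect(K)(v,-),\ect(L)(r(v),-))$ for $r\in O(d)$, which are valid because $\mu$ is rotation invariant, to get the sharper bound $W_1\le\inf_{r\in O(d)}d^1_{\ect}(K,r(L))$; your identity coupling already suffices for the stated claim, and your uniform bound on simplex counts makes the Lipschitz constant genuinely uniform, a point the paper leaves implicit.
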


\begin{proof}
    We know that ECT is Lipschitz continuous when equipped with $d_{\ect}^1$ metric from \Cref{theorem: ECT continuity}. It is sufficient to show that for two complexes $K, L\subset \R^d$, the Wasserstein distance between two pushforward measures is bounded by the ECT metric. Let $\|f-g\|_1 = \int |f(t)-g(t)| \d t$ be the $L_1$ function metric. 
    We have: 
    
    \begin{align*}
        W_1(\ect(K)_*\mu&, \ect(L)_*\mu) = \inf_{\gamma\in \Gamma(\ect(K)_*\mu,\ect(L)_*\mu)}\left(\int_{\cf(\R)\times\cf(\R)} \|f-g\|_1 \d\gamma(f,g)\right) \\
        & = \inf_{\gamma\in \Gamma(\ect(K)_*\mu,\ect(L)_*\mu)}\left(\int_{\im(\ect(K))\times \im(\ect(L))} \|f-g\|_1 \d\gamma(f,g)\right). \\
    \end{align*}
    
    Let $\nu_{r}(K,L)$ be the pushforward of the uniform measure on the sphere by the map $\ect(K,-)\times\ect(L,r(-)): S^{d-1}\rightarrow \CF\times\CF$.
    It is clear that $\nu_{r}(K,L)\in \Gamma(\ect(K)_*\mu,\ect(L)_*\mu)$ for all $r\in O(d)$. Then we have the following inequality:
    
    \begin{align*}
        W_1(\ect(K)_*\mu&, \ect(L)_*\mu)  \leq \inf_{r\in O(d)}\left(\int_{\im(\ect(K))\times \im(\ect(L))} \|f-g\|_1 \d \nu_{r}(K,L)(f,g)\right) \\
        & = \inf_{r\in O(d)}\left(\int_{S^{d-1}} \|\ect(K,v) - \ect(L,r(v))\|_1 \d\mu(v)\right) \\
        &\leq \inf_{r\in O(d)}d_{\ect}^1(K, r(L)).
    \end{align*}
\end{proof}

This theorem not only shows that the pushforward measure is stable under Wasserstein distance but also more stable than normal ECT outputs, as it is always less than or equal to the minimal distance between two shapes up to $O(d)$ actions.

\subsubsection*{SampEuler}

The ECT pushforward measure captures all the information from shape classes of strongly generic complexes and distinguishes different shape classes. However, being a measure on a space of curves, it is hardly computable. Based on this idea, we propose the following discretization of it, called \textbf{SampEuler}, as a computable descriptor of shape classes.

\begin{definition}
Let $K\subset\R^d$ be a geometric simplicial complex and let $\ect(K)_*\mu$ be the ECT pushforward measure of $K$.  Randomly sample $(X_1,\dots,X_n)\stackrel{\mathrm{i.i.d.}}{\sim} \ect(K)_*\mu.$
The \emph{SampEuler} of $K$ of order $n$, denoted $\samp_n$, is defined by
$$
  \samp_n(K) \;=\; \frac{1}{n}\sum_{i=1}^n \delta_{X_i},
$$
where $\delta_x$ is the Dirac measure at $x$.
\end{definition}

In practice, the ECT is discretized by fixing a set of directions, sampled at a fixed rate along a chosen orientation of the sphere, and using the \emph{same} ordered set of directions for every complex; the resulting descriptor is therefore an ordered, direction-indexed object. SampEuler differs by drawing the directions uniformly at random and independently for each complex, and by retaining the resulting ECCs only as an \emph{unordered} collection, the empirical measure $\samp_n(K)$ of the ECT pushforward measure $\ect(K)_*\mu$, keeping no record of which direction produced which curve. This construction makes SampEuler a discretization of the full shape descriptor $\ect(K)_*\mu$ that is invariant under rotations and reflections. For shape classification tasks, this invariance is desirable, as the shape class of a complex is defined up to isometry and the information about the particular choice of representation of the shape class acts as noise affecting the performance of ECT. 

For a geometric simplicial complex $K$, both $\ect(K)_*\mu$ and $\samp_n(K)$ are measures on $\CF$. We show in \Cref{measure-convergence} that $\samp_n$ converges to the ECT pushforward measure under the Wasserstein metrics as the order $n$ goes to infinity. Therefore, SampEuler captures sufficient information about the shape class $[K]$ and would be more stable than normal ECT discretizations. 

We will make use of the following lemma in the proof, proven in \cite{curry2022many} along the proof of \Cref{pushforward-measure}.
\begin{lemma}\label{lemma: support=im}
    For any simplicial complex $K$ embedded in $\R^d$, the support of $\ect(K)_*\mu$ is the set $\text{Im}(\ect(K))$.
\end{lemma}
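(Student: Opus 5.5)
The plan is to prove the two inclusions directly from the definition of the support. For a Borel measure $\nu$ on the (metric, hence Hausdorff) space $\CF$, $\operatorname{supp}\nu$ is the set of $f\in\CF$ such that every open neighbourhood $U\ni f$ has $\nu(U)>0$. The only analytic input is \Cref{continuity-and-t2}: the map $\ect(K):S^{d-1}\to\CF$, $v\mapsto\ect(K)(v,-)$, is continuous for the $L_1$ metric, and $\CF$ is Hausdorff. All other steps are soft topology.

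\emph{Step 1: $\text{Im}(\ect(K))\subseteq\operatorname{supp}\ect(K)_*\mu$.} Let $f=\ect(K)(v_0,-)$ and let $U$ be an open set with $f\in U$. By continuity, $\ect(K)^{-1}(U)$ is an open subset of $S^{d-1}$ containing $v_0$, so it is nonempty. The uniform measure $\mu$ gives positive mass to every nonempty open subset of the sphere, hence
\[
\ect(K)_*\mu(U)=\mu\bigl(\ect(K)^{-1}(U)\bigr)>0 .
\]
Therefore $f$ lies in the support.

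\emph{Step 2: $\operatorname{supp}\ect(K)_*\mu\subseteq\text{Im}(\ect(K))$.} The set $\text{Im}(\ect(K))$ is the continuous image of the compact space $S^{d-1}$, so it is compact. Since $\CF$ is Hausdorff, it is therefore closed. Its complement $V=\CF\setminus\text{Im}(\ect(K))$ is then open, and
\[
\ect(K)_*\mu(V)=\mu\bigl(\ect(K)^{-1}(V)\bigr)=\mu(\emptyset)=0 .
\]
So any $g\notin\text{Im}(\ect(K))$ has the open neighbourhood $V$ of measure zero, and $g$ is not in the support. Combining the two steps gives equality.

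The main obstacle is Step 2. It needs closedness of the image, which is exactly where the Hausdorff part of \Cref{continuity-and-t2} is used; without it a compact image need not be closed. A second point to check is that the $L_1$ topology on $\CF$ really is the topology in which the support is taken, so that the pushforward is a Borel measure for $\mathcal{B}(\CF)$. Continuity of $\ect(K)$ guarantees measurability of preimages of Borel sets, so the pushforward is well defined. No genericity assumption on $K$ is needed for this argument, which matches the statement holding for any embedded complex.
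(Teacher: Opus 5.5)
Your proof is correct: continuity of $v\mapsto\ect(K)(v,-)$, together with the fact that the uniform measure charges every nonempty open subset of $S^{d-1}$, gives $\text{Im}(\ect(K))\subseteq\operatorname{supp}$. Compactness of the image, hence its closedness in the Hausdorff space $\CF$, gives the reverse inclusion, and no genericity assumption is needed. The paper does not prove this lemma itself but defers to \cite{curry2022many}, and your two-inclusion argument is the standard one behind that citation; this is why the paper records the Hausdorff property of $\CF$ in \Cref{continuity-and-t2}.
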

The proof of our convergence theorem is a consequence of the following classical result.

\begin{definition}
    Let $(M,d)$ be a separable metric space and $r\geq 1$. We write $\mathcal{P}_r(M)$ for the set of Borel probability measures $\mu$ on $M$ with finite $r$-th moment, that is, $\int_M d(x,x_0)^r\,\d\mu(x) <\infty$ for some (equivalently, any) $x_0\in M$. 
    
    For $\mu,\nu\in\mathcal{P}_r(M)$, the \deff{$r$-Wasserstein distance} is
    \[
        W_r(\mu,\nu) = \left(\inf_{\gamma\in\Gamma(\mu,\nu)} \int_{M\times M} d(x,y)^r\,\d\gamma(x,y)\right)^{1/r},
    \]
    where $\Gamma(\mu,\nu)$ is the set of couplings of $\mu$ and $\nu$, i.e.\ Borel probability measures on $M\times M$ with marginals $\mu$ and $\nu$. Equipped with $W_r$, the space $\mathcal{P}_r(M)$ is a metric space.
\end{definition}

\begin{theorem}[Convergence of empirical measure \cite{varadarajan1958convergence, bobkov2019one}]\label{measure-convergence}
    Suppose $(M,d)$ is a separable metric space and $X_1,\dots, X_n\stackrel{\text{i.i.d.}}{\sim} \mu\in\mathcal{P}_r(M)$ with $r\geq 1$. Then, 
        $$W_r(\mu_n, \mu)\rightarrow_{a.s.} 0 \text{ as $n\rightarrow \infty$ }.$$
\end{theorem}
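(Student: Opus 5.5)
This is the classical almost-sure convergence of empirical measures in Wasserstein distance, so I will argue in two steps: first almost-sure weak convergence of $\mu_n=\frac1n\sum_{i\le n}\delta_{X_i}$ to $\mu$ (Varadarajan), then an upgrade to $W_r$ via the characterization of $W_r$-convergence as weak convergence plus convergence of $r$-th moments.

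\textbf{Weak convergence.} Since $M$ is separable, I first build a countable convergence-determining class of bounded continuous functions. Take a countable dense set $\{x_k\}$ and form the functions $f_{k,q}(x)=\min\{1,\max\{0,1-d(x,x_k)/q\}\}$ for rational $q>0$. The closure under finite products and rational combinations gives a countable family $\mathcal{F}$, and this family determines weak convergence: open sets are countable unions of balls $B(x_k,q)$, so the portmanteau inequality $\liminf\mu_n(U)\ge\mu(U)$ can be tested through these functions by monotone approximation. For each $f\in\mathcal{F}$, the strong law of large numbers gives $\int f\,\d\mu_n=\frac1n\sum_i f(X_i)\to\int f\,\d\mu$ almost surely. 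Intersecting countably many full-measure events yields a single full-measure event on which $\mu_n\Rightarrow\mu$ weakly.

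\textbf{Moment convergence.} Fix $x_0$. The SLLN applied to the integrable variable $d(X_i,x_0)^r$ (integrable because $\mu\in\mathcal{P}_r$) gives $\int d(x,x_0)^r\,\d\mu_n\to\int d(x,x_0)^r\,\d\mu$ almost surely. Intersecting this with the previous event still leaves a full-measure event.

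\textbf{Upgrade to $W_r$.} On that event I show $W_r(\mu_n,\mu)\to0$; this is the main obstacle. I would use the standard theorem (Villani, \emph{Optimal Transport}, Thm.\ 6.9) that, on a Polish space, weak convergence together with convergence of $r$-th moments is equivalent to $W_r$-convergence.

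Since $M$ is only assumed separable, I pass to the completion $\hat M$. The completion is Polish, and both $\mu_n$ and $\mu$ are Borel measures on $\hat M$ supported in $M$. Couplings on $\hat M$ then restrict appropriately: optimal couplings exist there, and their marginals live on $M$, so the $W_r$ distance computed on $\hat M$ equals the one on $M$. The equivalence theorem rests on two ingredients: uniform integrability of $d(\cdot,x_0)^r$ under $(\mu_n)$, which follows from the moment convergence plus weak convergence, and tightness together with Skorokhod-type coupling arguments on $\hat M$.

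If one wants to avoid Polishness, an alternative is a direct proof. Truncate the cost to $\min\{d,R\}$, bound the truncated Wasserstein distance via weak convergence and Prokhorov on a compact set capturing most of the mass, and control the tail with the moment convergence. I would cite \cite{bobkov2019one}/Villani for this step rather than reprove it.

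Finally, note the application. On $\CF$ with the $L_1$ metric, the measure $\ect(K)_*\mu$ has bounded support by Lemma~\ref{lemma: support=im}, since the ECCs of a fixed $K$ are uniformly bounded with compact support. Hence it lies in every $\mathcal{P}_r$, and the theorem gives $\samp_n(K)\to\ect(K)_*\mu$ almost surely.
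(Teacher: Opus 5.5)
Your argument is correct, and it is the standard proof behind the results the paper cites; the paper gives no proof of its own. Your route (Varadarajan's almost-sure weak convergence via a countable convergence-determining family, the SLLN for $d(X_i,x_0)^r$, and the equivalence of $W_r$-convergence with weak convergence plus convergence of $r$-th moments) is what those citations supply.

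The one step stated too loosely is the return from the completion $\hat M$ to $M$. When $M$ is not Borel in $\hat M$, marginals concentrated on $M$ do not by themselves put a coupling on $M\times M$. The fix uses that $\mu_n$ is finitely supported: any coupling on $\hat M\times\hat M$ then has the form $\frac1n\sum_i\delta_{X_i}\otimes\nu_i$ with $\frac1n\sum_i\nu_i=\mu$. So each $\nu_i\ll\mu$, and pulling the densities $\mathrm{d}\nu_i/\mathrm{d}\mu$ back to $M$ gives a coupling on $M\times M$ with the same cost. That yields the inequality you need, $W_r^{M}(\mu_n,\mu)\le W_r^{\hat M}(\mu_n,\mu)$.
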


\begin{theorem}\label{theorem: SampEuler convergence}
    Given a geometric simplicial complex $K\subset \R^d$, denote $\mu_n = \samp_n(K)$ and $\mu_K =\ect(K)_*\mu$. Then, $W_r(\mu_n, \mu_K)\rightarrow_{a.s.} 0 \text{ as $n\rightarrow \infty$ }$.
\end{theorem}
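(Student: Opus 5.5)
The plan is to obtain the statement as a direct application of \Cref{measure-convergence}. The metric space will be $M=\CF$ with the $L_1$ metric, the measure will be $\mu_K=\ect(K)_*\mu$, and the samples $X_1,\dots,X_n$ will be the i.i.d.\ curves used in the definition of $\samp_n(K)$, so that $\mu_n$ is exactly their empirical measure. Two hypotheses must then be checked: that the relevant metric space is separable, and that $\mu_K\in\mathcal{P}_r(M)$ for every $r\geq 1$.

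Separability of all of $\CF$ is not obvious, since it contains step functions with arbitrary real breakpoints. I will therefore take $M=\im(\ect(K))\subset\CF$ with the restricted metric. The map $\ect(K):S^{d-1}\to\CF$ is continuous by \Cref{continuity-and-t2}, and $S^{d-1}$ is compact and separable. Hence $M$ is compact, so it is separable and complete. By \Cref{lemma: support=im}, $\mu_K$ is supported on $M$, so $\mu_K$ is a Borel probability measure on $M$. The samples $X_i$ lie in $M$ almost surely, so $\mu_n$ is also a measure on $M$.

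For the moment condition, compactness of $M$ means the metric is bounded on it, say by $D$. Then for any $x_0\in M$ we have $\int_M d(x,x_0)^r\d\mu_K\leq D^r<\infty$. A concrete bound is also available: all vertices of $K$ lie in a ball of radius $a$, and for $|t|>a$ every ECC equals $0$ or $\chi(K)$. The curves are bounded in absolute value by the number of simplices, so their $L_1$ norms are uniformly bounded.

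Applying \Cref{measure-convergence} on $M$ gives $W_r(\mu_n,\mu_K)\to 0$ almost surely. The remaining step, which I expect to be the only subtle point, is to check that the Wasserstein distance computed on $M$ agrees with the one computed on $\CF$. One direction is immediate: any coupling on $M\times M$ is a coupling on $\CF\times\CF$. For the other direction, any coupling on $\CF\times\CF$ of two measures concentrated on $M$ is itself concentrated on $M\times M$, because both of its marginals give full mass to $M$. The two infima are therefore taken over the same couplings and coincide.

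Finally, measurability of the sampling and the Borel structure are inherited from the continuity of $\ect(K)$, which makes the composite $v\mapsto\ect(K)(v,-)$ a random element of $M$ whenever $v$ is uniform on $S^{d-1}$. This completes the argument.
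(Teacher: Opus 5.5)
Your proof is correct. Its skeleton matches the paper's: both reduce the statement to the classical empirical-measure theorem (\Cref{measure-convergence}) by checking separability and a finite $r$-th moment. The difference is where you check these two hypotheses.

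The paper works in the whole ambient space $\CF$. It bounds the moment by an explicit constant $C_r$, using that ECCs are integer-valued, uniformly bounded, and agree outside a fixed compact interval. It proves separability of all of $\CF$ by approximating an arbitrary step function by one with rational breakpoints and integer values.

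You instead restrict to $M=\im(\ect(K))$. This set is compact because $\ect(K)$ is continuous (\Cref{continuity-and-t2}) and $S^{d-1}$ is compact, so separability and boundedness (hence every moment) follow from one fact. The price is the extra check that $W_r$ computed on $M$ equals $W_r$ computed on $\CF$. You handle this correctly: $M$ is compact, hence closed, and any coupling of two measures concentrated on $M$ is concentrated on $M\times M$.

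Your route has a genuine advantage. On $M$ the $L_1$ distance is automatically a finite, honest metric, with no conventions about $\CF$ needed, because every curve in $M$ is right-continuous, vanishes below $-a$, and equals $\chi(K)$ above $a$. On all of $\CF$, by contrast, the $L_1$ distance can be infinite, for example between curves with different limits at $+\infty$; the paper's treatment of $\CF$ as a separable metric space leaves this implicit. In exchange, the paper's route gives a single separable ambient space in which the measures of different complexes can be compared, as in \Cref{theorem: ect_pushforward_measure_stability}.

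One small slip in your concrete-bound aside: it is the $L_1$ norms of differences of ECCs that are uniformly bounded. The norms of the curves themselves are infinite whenever $\chi(K)\neq 0$.
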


\begin{proof}
    It is sufficient to check that the conditions of \Cref{measure-convergence} are satisfied. We first check that $\mu_K \in \mathcal{P}_r(\CF)$. The Euler curves of $K$ are integer-valued, uniformly bounded, and differ only on a fixed compact interval, so there exists some constant $C_r$ such that $\|\ect(K,v_1)- \ect(K, v_2)\|_r\leq C_r$ for all $v_1, v_2\in S^{d-1}$. From \Cref{lemma: support=im}, we know that the support of $\mu_K$ is the set of curves in the image of $\ect(K)$. For any $f_0\in\im(\ect(K))$, we have $$\int_{\cf(\R)} \|f - f_0\|_r^r\,\text{d}\mu_K(f) = \int_{\im(\ect(K))}\|f - f_0\|_r^r\,\text{d}\mu_K(f)\leq \int_{\im(\ect(K))}C_r^r\,\text{d}\mu_K(f)= C_r^r,$$ as $\mu_K$ is a probability measure. Hence, $\mu_K\in \mathcal{P}_r(\CF)$.

    Next, we show that the space of possible Euler Characteristic Curves (ECCs) is separable.  We know from \Cref{continuity-and-t2} that this space is a Hausdorff metric space using $L_p$ metrics with $1\leq p < \infty$. Take curves that change values at n rational numbers; this set $F_n$ is in one-to-one correspondence with the set $(\Q\times\Z)^n$, which is countable.  Take the countable union $F = \cup_{i=0}^{\infty} F_n$. Since each constructible function $f\in \cf(\R)$ only has finitely many value changes, we can find $f'\in F$ arbitrarily close to $f$. Since $F$ is countable, the space $\cf(\R)$ is separable. 
\end{proof}

\subsubsection*{Vectorization of SampEuler}
As shown above, computing ECT along random directions using SampEuler has nice theoretical guarantees. However, for small biological datasets, this would lead to a high-dimensional feature, and the classification models would suffer from the curse of dimensionality. Interpreting the pushforward measure result also requires some visualization tools to help. To tackle the two problems, we propose the following vectorization algorithm, which can be used as a low-dimensional vectorization tool as well as a visualization tool of the SampEuler. It evaluates the empirical ECT pushforward measure, computed by SampEuler, on the sets of curves having constant value across certain intervals.

\begin{definition}
Let $K$ be an embedded simplicial complex in $\R^d$ and $l >0$. Denote by $\Int(\R)_l$ the set of closed intervals of length $l$ and $K_{(a,b),c}: = \{\phi\in \text{supp}(\samp_n(K)): \phi|_{(a,b)} = c\1_{(a,b)}\}$ for $a,b\in\R$ and $c\in\Z$. We define the \deff{vectorization of SampEuler}(or vectorized SampEuler) of $K$ as follows:
    \begin{align*}
        \mathcal{V}(K, l): \Int(\R)_l\times\Z &\rightarrow [0,1],\\
        ([a,b],k) &\mapsto \samp_n(K)(K_{[a,b],k}),
    \end{align*}
    We will often abbreviate the notation to be $\mathcal{V}(K)([a,b],k)$ when $l$ is clear from the context. 
\end{definition}

Note that the above definition is still well-defined when we use the ECT pushforward measure instead of the SampEuler of $K$. For theoretical purposes, we work with the definition using ECT pushforward measures. It is clear that if two complexes are equal up to isometry, they have the same ECT pushforward measure. Hence, the evaluations of this vectorization for both complexes are equal when evaluated using the ECT pushforward measure. This shows that such vectorization is isometry invariant.

\begin{remark}
    We can recover the average of ECT over all directions at given filtration values using the vectorization of SampEuler by: 
        $$\int_{S^{d-1}}\ect(K)(v,t) \d v=\int_{\R} \mathcal{V}(K,0)({t}, y) \d y.$$
    Using this observation, we can write down the explicit formula for computing DETECT \cite{marsh2022detecting} from the vectorization of SampEuler:
        \begin{align*}
            \text{DETECT}(K)(x) & = \int_{S^{d-1}}\int_{-a}^{x}\ect(K)(v,t)-\overline{\ect(K)(v,-)} \d t \d v\\
            & = \int_{-a}^{x}\int_{S^{d-1}}\ect(K)(v,t) \d v \d t-\int_{-a}^{x}\int_{S^{d-1}}\overline{\ect(K)(v,-)} \d v \d t\\
            & = \int_{-a}^{x}\int_{-\infty}^{\infty} \mathcal{V}(K,0)(t, y) \d y \d t - \left(\frac{x+a}{2a}\right)\int_{S^{d-1}}\int_{-a}^a \ect(K)(v,t) \d t \d v \\
            & = \int_{-a}^{x}\int_{-\infty}^{\infty} \mathcal{V}(K,0)(t, y) \d y \d t - \left(\frac{x+a}{2a}\right) \int_{-a}^a \int_{-\infty}^{\infty} \mathcal{V}(K,0)(t, y) \d y \d t.
        \end{align*}
    This shows that vectorization of SampEuler contains all the information from DETECT while still being isometry invariant.
\end{remark}

\subsubsection*{Computation Implementations}
In this section, we discuss how ECT, SampEuler, and the vectorization of SampEuler are computed in applications for geometric simplicial complexes and grayscale images. We adopt \Cref{lemma: alternativeECT} from \cite{curry2022many}.

\begin{definition}
    Let $K$ be a geometric simplicial complex. For each vertex $x\in K$, the \deff{lower star} of $X$ along direction $v$, denoted by $K^{(x,v)}$, is defined by: 
        $$K^{(x,v)} : = \{\sigma\subset K: \max_{y\in\sigma} y\cdot v = x\cdot v\}.$$
\end{definition}

\begin{lemma}\label{lemma: alternativeECT}
    Let $K$ be a geometric simplicial complex. Denote by $\text{ver}(K)$ the set of vertices of $K$. Then we have:
        $$\ect(K)(v, -) = \sum_{x\in \text{ver}(K)} \chi(K^{(x,v)})\1_{[x\cdot v, \infty)}.$$
\end{lemma}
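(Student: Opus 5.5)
The plan is to prove the identity pointwise: fix $t\in\R$ and compare $\chi(K^{\langle v,\cdot\rangle\leq t})$ with $\sum_{x} \chi(K^{(x,v)})\1_{[x\cdot v,\infty)}(t)$. The idea is to partition the sublevel subcomplex $K^{\langle v,\cdot\rangle\leq t}$ into lower stars. Since $\chi$, as defined above, is additive over disjoint collections of simplices, each simplex is counted with sign $(-1)^{\dim}$, and the two sides can then be matched term by term.

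First, assign each simplex $\sigma\in K$ to a vertex. The linear function $y\mapsto y\cdot v$ restricted to the convex hull $\sigma$ attains its maximum at a vertex of $\sigma$. I would therefore rewrite the lower star condition as $\max_{y\in\text{ver}(\sigma)} y\cdot v = x\cdot v$ and let $x$ be a maximizing vertex.

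The main obstacle is that this assignment need not be unique. If two vertices of $\sigma$ tie for the maximum, or if two distinct vertices of $K$ have the same height $x\cdot v$, then $\sigma$ could lie in several lower stars and be counted more than once. I would handle this in one of two ways. The first option restricts the lower star to $\sigma\ni x$ (the standard convention) and takes $v$ such that the heights of the vertices are pairwise distinct; this holds for all but finitely many great-sphere-worth of directions, a measure-zero set. The second option is to break ties by a fixed total order on $\text{ver}(K)$, so that each simplex receives exactly one vertex $x(\sigma)$. I would state that the formula is meant with this convention. In the generic case, where heights are distinct and $\sigma\ni x$, the assignment is automatically unique, because the only vertex of $\sigma$ at maximal height is $x$ itself.

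With a unique assignment $\sigma\mapsto x(\sigma)$ in hand, the next step is to check that $\sigma\subset K^{\langle v,\cdot\rangle\leq t}$ if and only if $x(\sigma)\cdot v\leq t$. This holds because $\sigma$ lies in the half-space exactly when its maximal height does. It follows that
\[
K^{\langle v,\cdot\rangle\leq t}=\bigsqcup_{x:\,x\cdot v\leq t}K^{(x,v)}.
\]
Applying $\chi(\cdot)=\sum(-1)^{\dim}$ to this disjoint union gives
\[
\chi\bigl(K^{\langle v,\cdot\rangle\leq t}\bigr)=\sum_{x\in\text{ver}(K)}\chi\bigl(K^{(x,v)}\bigr)\1_{[x\cdot v,\infty)}(t).
\]
Since $t$ was arbitrary, this equality of functions is the claimed identity.
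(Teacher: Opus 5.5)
Your proof is correct and is essentially the argument behind this lemma: the paper does not prove it but adopts it from \cite{curry2022many}, and its computational description (assign each simplex to its argmax vertex $x_*$, then sum the lower-star Euler characteristics of the vertices with $x\cdot v\leq t$) is exactly your partition-plus-additivity argument. Your treatment of ties is a real sharpening: under the paper's literal definition of $K^{(x,v)}$, which does not require $x\in\sigma$, two vertices at the same height have identical lower stars, so the right-hand side counts those simplices more than once. The identity therefore needs either pairwise distinct vertex heights (true off a finite union of great spheres) or a tie-breaking rule such as the one the argmax in the implementation implicitly supplies.
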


Using \Cref{lemma: alternativeECT}, we compute the ECT of $K$ as follows. We first fix a set of directions along a chosen orientation. For each chosen direction $v$, we calculate the vertex of maximum filtration value of each $k$-simplex $[x_0,\dots,x_k]$, i.e.\ we find $x_* = \text{argmax}\{x_i\cdot v: i=0,\dots, k\}$. We assign this $k$-simplex to the lower star of $x_*$. We compute the Euler characteristic of the lower star of all vertices after assigning all simplices. Without loss of generality, we assume the filtration values of all vertices are bounded in the interval $[-a, a]$.  We densely sample $t$-values in the interval $[-a, a]$. For each $t$, we compute the sum $\sum_{\substack{x\in \text{ver}(K) \\ x\cdot v\leq t}\chi(K^{(x,v)})}$ as the value of $\ect(K)(v,t)$. We record all the outputs in order, and end up with an $m\times n$ matrix representing ECT, where $n$ is the number of directions computed and $m$ is the number of $t$-values evaluated along each direction. See \Cref{figure: ect illustration} for illustration. An illustration video can be found in \url{https://github.com/reddevil0623/Shape-transform-descriptor-for-thymus-structures}. The computation of ECT for grayscale images is done similarly, and see \cite{lebovici2024efficient} for more details.

The computation of SampEuler is similar, but with a random set of directions generated instead of the fixed set of directions. We now explain the computation of the vectorization based on the results of SampEuler. Without loss of generality, we assume the filtration values of all vertices along all directions are bounded within the interval $[-a, a]$ and the Euler characteristics of all possible sublevel sets are bounded within the interval $[-b,b]$. Let $\Z_b = \Z\cap[-b, b]$. We uniformly partition the space $[-a, a]\times \Z_b$ into constant intervals of the same length. We consider the matrix representation of SampEuler. For each curve represented as an array of numbers $[c_1,\dots, c_m]$, let $c_i,\dots, c_j$ be the entries with corresponding filtration value in the interval $l$. If $c_i,\dots, c_j$ are all equal to a constant $c$, we add 1 to the corresponding entry in the vectorization result for interval $l\times c$. Otherwise, nothing changes. We repeat the process for all curves and divide the resulting matrix by $n$, the number of directions, to normalize. 

When applying SampEuler and its vectorization, we generate the $n$ directions using \texttt{numpy.\allowbreak random.\allowbreak uniform} \cite{harris2020array}, so that directions are drawn independently and uniformly from the unit circle, satisfying the assumption of \Cref{theorem: SampEuler convergence}. The resulting descriptor therefore approximates the ECT pushforward measure and, once sufficiently many directions are sampled, is insensitive to the particular random seed (\Cref{section: ablation}). All parameters are held constant across all samples in a given task, which ensures consistency between the training and test sets when the method is applied to classification tasks.

\begin{figure}
\centering
\includegraphics[width=\textwidth]{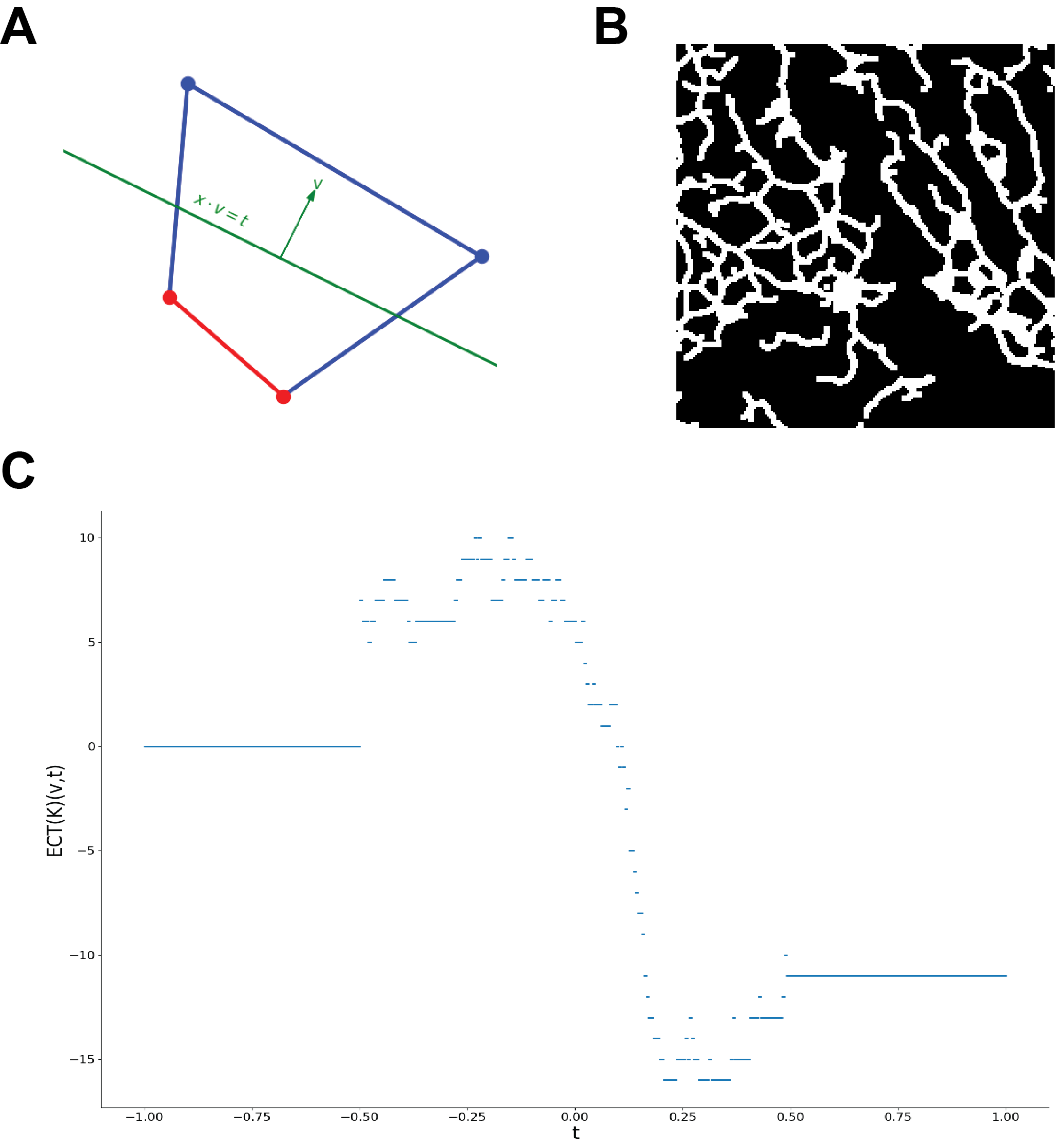}
\caption{\textit{A}: Schematic showing how the Euler characteristic transform (ECT) is calculated for complex $K$ along direction $v$ at filtration value t. The hyperplane $x\cdot v = t$ is drawn to determine the half-space $x\cdot v<t$. The vertices of $K$ lying in this half space, together with their lower stars, are considered as plotted in red. The Euler characteristic of the red part of the shape is then computed as the output $\ect(K)(v,t)$. \textit{B}: A segmented binary quadrant of mouse thymi. The image is of size 200 pixels $\times$ 200 pixels. We use \cite{lebovici2024efficient} to create a square complex of diagonal length 1 based on the image for computing ECT. \textit{C}: Euler characteristic curve along direction $(1,0)$ for the image in B. The initial increase in the curve value accounts for the emergence of disjoint white pixels on the left edge of the image. As we include more of the image towards the right, white pixels connect and form loops, aligning with the fast decrease starting around filtration value $-0.25$. The final increasing trend at around filtration value $0.25$ is again due to the increase in connected components on the right end of the image, due to the cutting of the quadrant.}
\label{figure: ect illustration}
\end{figure}

\subsection{Other Methods}
\subsubsection*{Persistence Homology with Signed Distance Function}
We use \cite{scikit-fmm} to compute the signed distance function filtration for each thymus image by taking the boundary as the boundary contour of the white pixels. We apply \cite{adams2017persistence} to compute degree 0 and degree 1 Persistence Image for each image using signed distance function filtration with a Gaussian kernel $\sigma = 1.0$ and uniform weight. 

\subsubsection*{Enrichment ratio}
To characterize changes in cell-type composition within each thymic region, a naive approach is to examine the raw proportion of each cell type in that region. However, with more than 40 cell types, Presel DP comprises around $60\%$ in both age groups. This makes changes in the remaining types be obscured when using proportions alone. To better capture meaningful differences across all cell types, we instead use a cell enrichment ratio, defined as follows.

\begin{definition}
    Consider quadrant $q\in Q$, and cell type $j\in J$. Let $C_{q,j}$ be the number of type $j$ cells in quadrant $q$. Denote 
        $$r_{q,j} = \frac{C_{q,j}+\alpha}{\sum_{k\in J}C_{q,k} +\alpha J},$$
    where $\alpha$ is the Dirichlet smooth constant set to avoid quadrants with few cells, and $r_{q,j}$ is the smoothed ratio of type $j$ cell in quadrant $q$. Similarly, we denote 
        $$Y_j = \frac{(\sum_{q'\in Y}C_{q',j})+\alpha}{(\sum_{k\in J}\sum_{q'\in Y}C_{q',j})+\alpha J}$$
    as the ratio of type $j$ cells in all young quadrants to be used as the baseline. We define the \textbf{enrichment ratio} for quadrant $q$ and cell type $j$ as 
        $$E_{q,j} = \frac{r_{q,j}}{Y_j}.$$
\end{definition}

We focus on cell types: DN3, Postsel DP(CD69+), Postsel DP(CD69-), Presel DP. For each quadrant $q$, we have a cell feature vector $(E_{q,1},\dots, E_{q,n})$. We equip the cell feature vector space with the $L_2$ Euclidean norm. 

\section{Data Analysis}
\subsection{SampEuler Examples}
In this section, we show how vectorization of SampEuler provides interpretable topological features. The vectorization creates an intensity map of ECC curves across all directions, revealing typical topological patterns for a given shape. We demonstrate how these patterns can be interpreted in terms of the underlying geometric structure.

We generate sets of simplicial complex representations of disjoint ellipses of major axis length 2 and minor axis length 1. We sample 80 points around the boundary of each ellipse and apply Gaussian noise with standard deviation $0.05$. We create $n$ such ellipses by randomly sampling the position of the centre and the angle of rotation. We fill all ellipses.  See \Cref{figure:eulerimage_example}. 

We first show that the vectorized SampEuler can detect the connectedness of the input complex. In \Cref{figure:eulerimage_example}\textit{A-B}, the ellipse centres are uniformly distributed in a square of side length 50. \Cref{figure:eulerimage_example}\textit{A} contains 50 such ellipses, and \Cref{figure:eulerimage_example}\textit{B} contains 40 such ellipses. By definition, the final converging value of the vectorized SampEuler is equal to the total Euler characteristic of the input complex. In this case, the Euler characteristic is equal to the number of connected components. The converging value is 50 in \Cref{figure:eulerimage_example}\textit{A} and 40 in \Cref{figure:eulerimage_example}\textit{B}, indicating that \Cref{figure:eulerimage_example}\textit{A} has more connected components. 

In \Cref{figure:eulerimage_example}\textit{C-D}, we show that vectorized SampEuler captures different sampling distributions used for sampling ellipse centres. The centres are restricted to quadrants 2, 3, and 4 for \Cref{figure:eulerimage_example}\textit{C}. Two general trends appear in the Euler characteristic curves. The first trend begins near filtration value $-30$ and rises to a peak near  $+10$. This corresponds to directions pointing into quadrant 1: the growing sublevel set first encounters the ellipses at filtration values comparable to those in \Cref{figure:eulerimage_example}\textit{A}. However, the final ellipse has a lower filtration value than in \Cref{figure:eulerimage_example}\textit{A} due to the missing corner in the first quadrant, and the ellipses are more densely packed in \Cref{figure:eulerimage_example}\textit{C}, explaining the steeper trend of the curve. The second trend starts around $-20$ and peaks at about $+20$. This reflects curves along directions in the third quadrant: in the absence of ellipses in the first quadrant, the first ellipse is included later than in \Cref{figure:eulerimage_example}\textit{A}, shifting both the start and peak to higher filtration values. The ending filtration value matches because both \Cref{figure:eulerimage_example}\textit{C} and \Cref{figure:eulerimage_example}\textit{A} include their final ellipses, the corner ellipse in the third quadrant, at similar heights. For all other directional curves, the increasing section lies between these two extremes, yielding the ``quadrilateral'' region in the vectorized SampEuler plot.

The vectorized SampEuler in \Cref{figure:eulerimage_example}\textit{D} displays a funnel-shaped region bounded approximately by lines connecting $(-40,0)$ to $(40,50)$ and $(-10,0)$ to $(10,50)$. This pattern reflects the geometry of the elliptical sampling region. For directions near the minor axis, ellipses first appear at higher filtration values but are captured within a short interval, producing steep ECC curves. For directions near the major axis, ellipses appear earlier but span a longer filtration range, producing shallower curves. The funnel shape emerges as ECCs transition continuously between these extremes across all sampling directions.

\begin{figure}
  \centering
  \includegraphics[width=\textwidth, 
                   height=0.9\textheight, 
                   keepaspectratio]{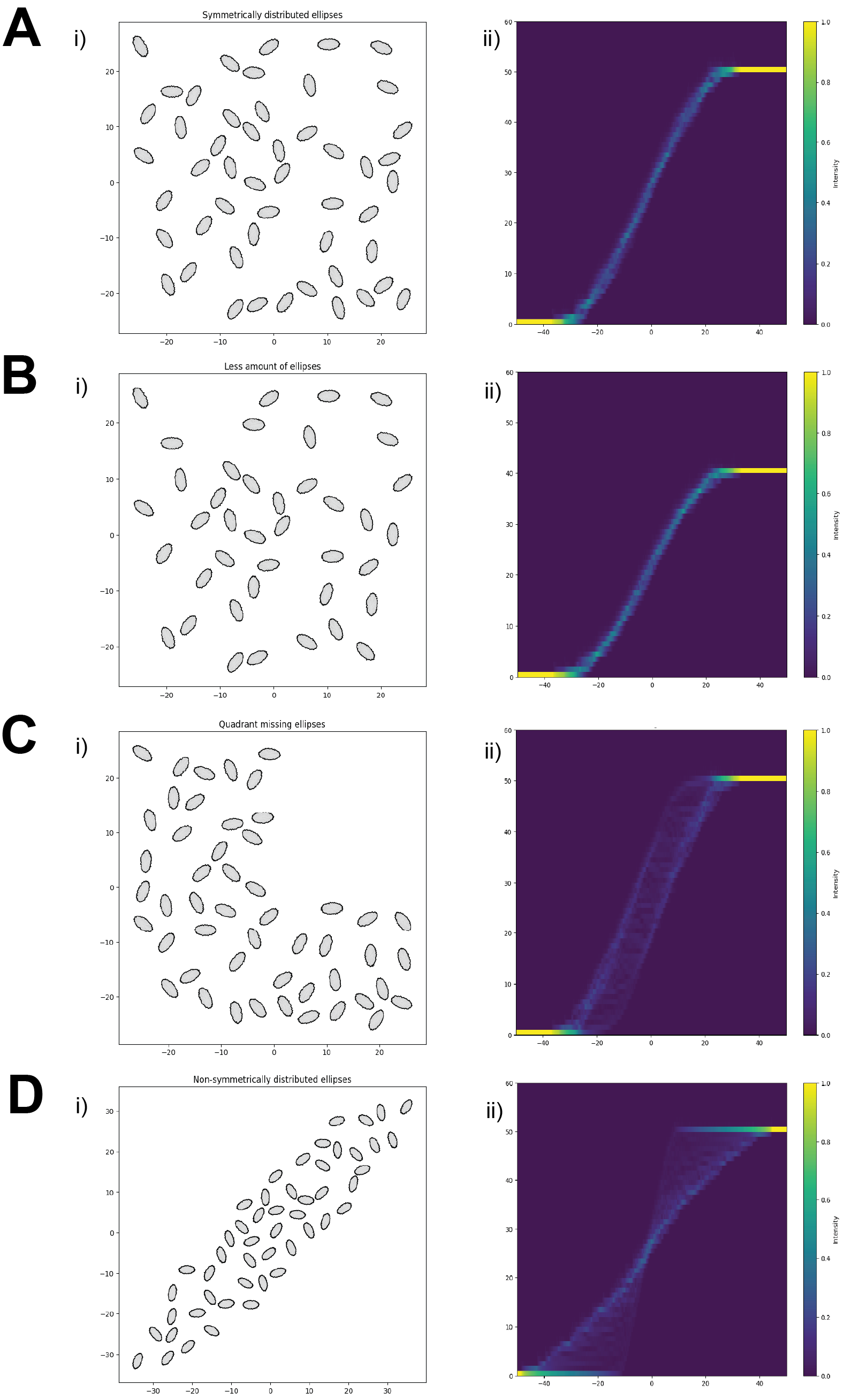}
  \caption{\textit{A}: i) 50 geometric simplicial complex representations of ellipses with centres uniformly sampled from the centred square of side length 50. ii) The corresponding vectorized SampEuler result.
  \textit{B}: i) 40 geometric simplicial complex representations of ellipses with centres uniformly sampled from the centred square of side length 50. ii) The corresponding vectorized SampEuler result.
  \textit{C}: i) 50 geometric simplicial complex representations of ellipses with centres uniformly sampled from the restriction of quadrant 2, 3, 4 of the centred square of side length 50. ii) The corresponding vectorized SampEuler result.
  \textit{D}: i) 50 geometric simplicial complex representations of ellipses with centres uniformly sampled from a centred ellipse with major axis length 100 and minor axis length 20. ii) The corresponding vectorized SampEuler result.}
  \label{figure:eulerimage_example}
\end{figure}

\subsection{Comparison to ECT and DETECT}

In this section, we compare SampEuler and vectorized SampEuler to established topological shape descriptors. This example demonstrates that SampEuler and vectorized SampEuler capture sufficient information about the shape class of the input complex.

We use geometric simplicial complexes explained in \Cref{section: toy sample} to illustrate that SampEuler and its vectorization are more robust to perturbations than the ECT, and are more informative than DETECT. We run two experiments for each method. In the first experiment, we generate two families of 20 complexes without rotating them. In the second experiment, we generate complexes, with each complex randomly rotated about the origin. We use mdscale \cite{pedregosa2011scikit} to visualize the results. As shown in \Cref{figure:eulerimage_example_results}, ECT only distinguishes the two families of shapes correctly when all complexes are aligned. However, when we randomly rotate the complexes, the ECT fails to distinguish the two families. For DETECT, information loss during averaging means that DETECT fails to distinguish the two families in both experiments. However, with SampEuler and vectorized SampEuler, clear separation of the two classes can be seen in both experiments, showing that they are robust to isometries and contain sufficient information about the shape of input complexes.

\begin{figure}
  \centering
  \includegraphics[width=\textwidth, 
                   height=0.95\textheight, 
                   keepaspectratio]{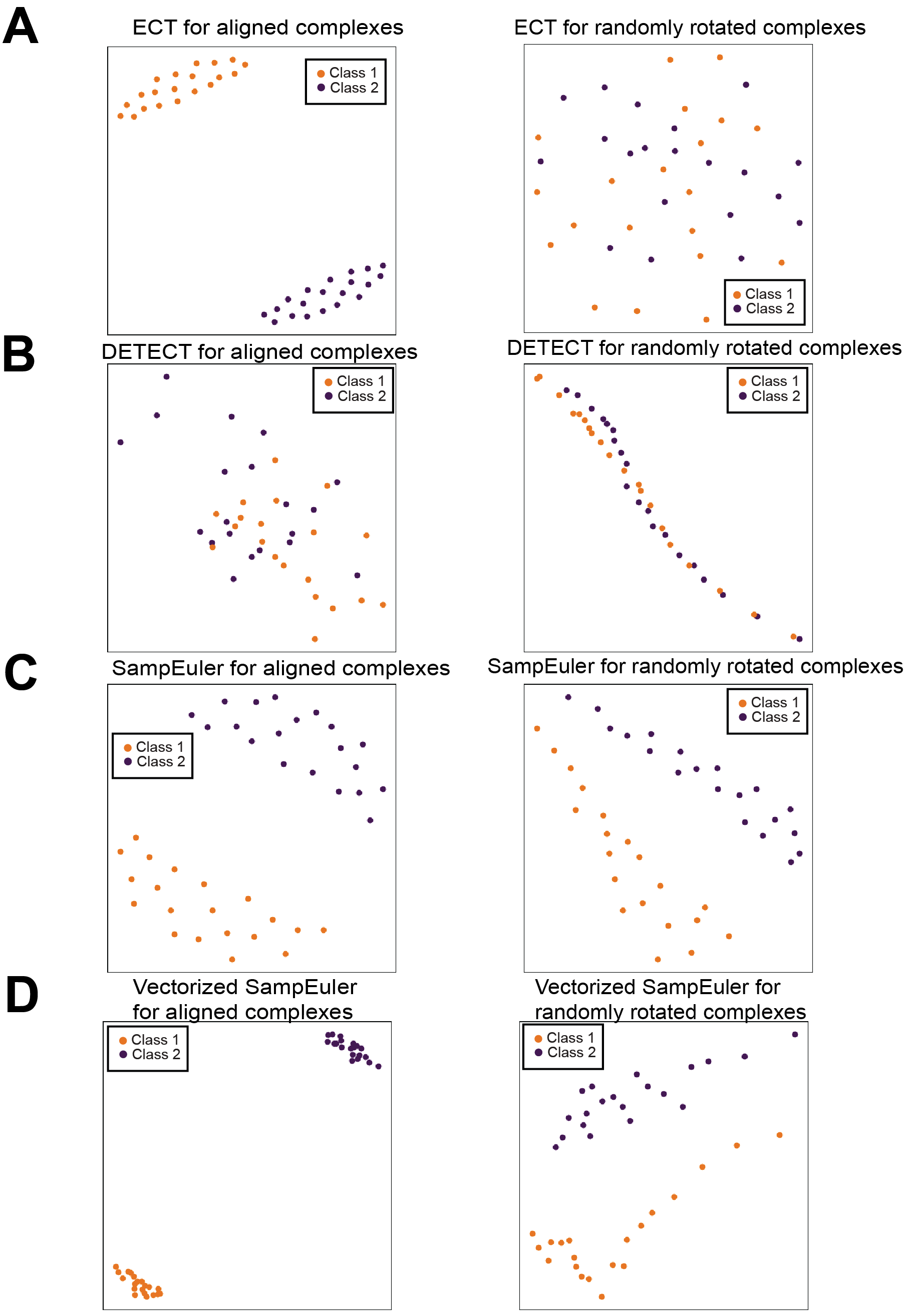}
  \caption{ MDS visualizations of pairwise similarities between analysis results of two classes of generated geometric simplicial complexes. The left plots are when all complexes are aligned, and the right plots are when all complexes are randomly rotated. \textit{A}: results of ECT. \textit{B}: results of DETECT. \textit{C}: results of SampEuler. \textit{D}: results of vectorized SampEuler.}
  \label{figure:eulerimage_example_results}
\end{figure}

\subsection{MPEG-7 Dataset for Shape Classification}
To test against existing methods and understand the impact of discretization on analysis results, we apply our methods to the MPEG-7 dataset with simple normalization steps, as described in \Cref{section: mpeg7}. We first compare with methods mentioned in \cite{le2018persistence, le2019tree}. We use the 70/30 train-test-split ratio and repeat the experiment 100 times to report the average accuracy. To be consistent with other topological pipelines, we use the Support Vector Machine (SVM) classification model \cite{cortes1995support}. We compute the pairwise Wasserstein distance for SampEuler and $L_2$-distances for vectorized SampEulers and construct the RBF-kernel as input to the SVMs. Classification accuracy is reported using the best-performing parameters from the ablation study (\Cref{section: ablation}).

\begin{longtable}{L{10cm} L{3cm}}
\caption{The average accuracy $(\%)$ of different methods on the 10-class subset of the MPEG7 dataset.}
\label{table: accuracy 1} \\

\toprule
\textbf{Method} & \textbf{Accuracy} \\
\midrule
\endfirsthead

\multicolumn{2}{c}{{\bfseries \tablename\ \thetable{} -- continued from previous page}} \\
\toprule
\textbf{Method} & \textbf{Accuracy} \\
\midrule
\endhead

\midrule
\multicolumn{2}{r}{{Continued on next page}} \\
\endfoot

\bottomrule
\endlastfoot

Persistence Scale Space Kernel \cite{reininghaus2015stable} & 73.33 \\
\hline
Persistence Weighted Gaussian Kernel \cite{kusano2016persistence} & 74.83 \\
\hline
Sliced Wasserstein Kernel \cite{carriere2017sliced} & 76.83 \\
\hline
Tangent Vector Representation with Gaussian Kernel \cite{anirudh2016riemannian} & 66.17 \\
\hline
Persistence Fisher Kernel \cite{le2018persistence} & 80.00 \\
\hline
Tree Sliced Wasserstein Kernel \cite{le2019tree} & (80.00, 85.00)\textsuperscript{*} \\
\hline
QUPID \cite{van2025discrete} & 68.80 \\
\hline
ECT \cite{turner2014persistent} & 86.23 \\
\hline
DETECT \cite{marsh2022detecting} & 51.02 \\
\hline
SampEuler & \textbf{91.67} \\
\hline
vectorized SampEuler & \textbf{94.39} \\
\hline
\multicolumn{2}{p{13cm}}{\footnotesize \textsuperscript{*} Exact value not reported; range estimated from the bar chart in the original text.} \\
\end{longtable}

To compare with other dedicated 2D pattern recognition methods and topological methods with deep learning, we also perform the classification task on the whole MPEG7 dataset with a 90/10 train-test-split ratio and repeat the experiments 10 times to report the average accuracy.

\begin{longtable}{L{9cm} L{3cm}}
\caption{The average accuracy (\%) of different methods on the MPEG7 dataset.}
\label{table: accuracy 2} \\

\toprule
\textbf{Method} & \textbf{Accuracy} \\
\midrule
\endfirsthead

\multicolumn{2}{c}{{\bfseries \tablename\ \thetable{} -- continued from previous page}} \\
\toprule
\textbf{Method} & \textbf{Accuracy} \\
\midrule
\endhead

\midrule
\multicolumn{2}{r}{{Continued on next page}} \\
\endfoot

\bottomrule
\endlastfoot

Skeleton Paths \cite{bai2009integrating} & 86.7 \\
\hline
Bag of Contours \cite{wang2014bag} & \textbf{97.2} \\
\hline
Persistence Images + NN \cite{adams2017persistence} & 92.3 \\
\hline
Learned Persistence Codebooks \cite{hofer2019learning} + NN & 92.7 \\
\hline
ECT \cite{turner2014persistent} & 71.3 \\
\hline
DETECT \cite{marsh2022detecting} & 36.4 \\
\hline
SampEuler & \cellcolor[HTML]{67FD9A}91.4 \\
\hline
vectorized SampEuler & \cellcolor[HTML]{67FD9A}86.8 \\
\hline
\multicolumn{2}{p{12cm}}{\footnotesize \colorbox[HTML]{67FD9A}{\phantom{X}} indicates our proposed methods.} \\

\end{longtable}

As shown in \Cref{table: accuracy 1}, SampEuler and vectorized SampEuler provide better classification accuracies than all other methods, with vectorized SampEuler attaining the highest accuracy, likely because its vectorization step reduces feature dimensionality while maintaining all shape information. In the larger dataset of \Cref{table: accuracy 2}, Bag-of-Contours and neural-network–-based methods attain higher accuracies than our approaches. The Bag-of-Contours method in \cite{wang2014bag} is tailored to 2D shape recognition and relies on precise boundary-contour segmentation, which can be computationally intensive and dataset-dependent. Likewise, neural networks often achieve state-of-the-art accuracy on sufficiently large training sets, though their learned representations are uninterpretable and demand substantial computational resources. Although SampEuler and vectorized SampEuler fall short of these top accuracies, they are segmentation-free, applicable to general shape analysis, and come with theoretical guarantees that support interpretability. In this larger dataset, the original SampEuler achieves better performance than its vectorized variant, presumably because its higher-resolution features can leverage the increased sample size to capture finer geometric details.

Across both datasets, SampEuler and vectorized SampEuler consistently outperform ECT- and DETECT-based baselines. DETECT suffers substantial information loss from the averaging step, leading to the worst performance overall. The high sensitivity of ECT to perturbations causes instability in classification tasks.

\subsection{Ablation Study}\label{section: ablation}
We use the MPEG7 dataset to perform the ablation study of the SampEuler method. The results are shown in \Cref{fig:xpoints-ablation} and \Cref{fig:k-ablation}. These results suggest that if we sample more than 100 directions and 1000 points along each direction, the additional information yields diminishing returns in classification accuracy. Moreover, leveraging the optimised implementation in Eucalc \cite{lebovici2024efficient} enables efficient computation of these descriptors, making the above parameter choices practical in routine use.

\newcommand{\xpointlist}{100,1000,3000,5000,7000,9000,11000,13000,15000}
\begin{figure}[p]
  \centering
  \begin{adjustbox}{max size={\textwidth}{0.95\textheight}}
    \begin{minipage}{\textwidth}
      \foreach \x [count=\i] in \xpointlist{%
        \begin{subfigure}{0.32\textwidth}
          \centering
          \includegraphics[width=\linewidth,keepaspectratio]%
            {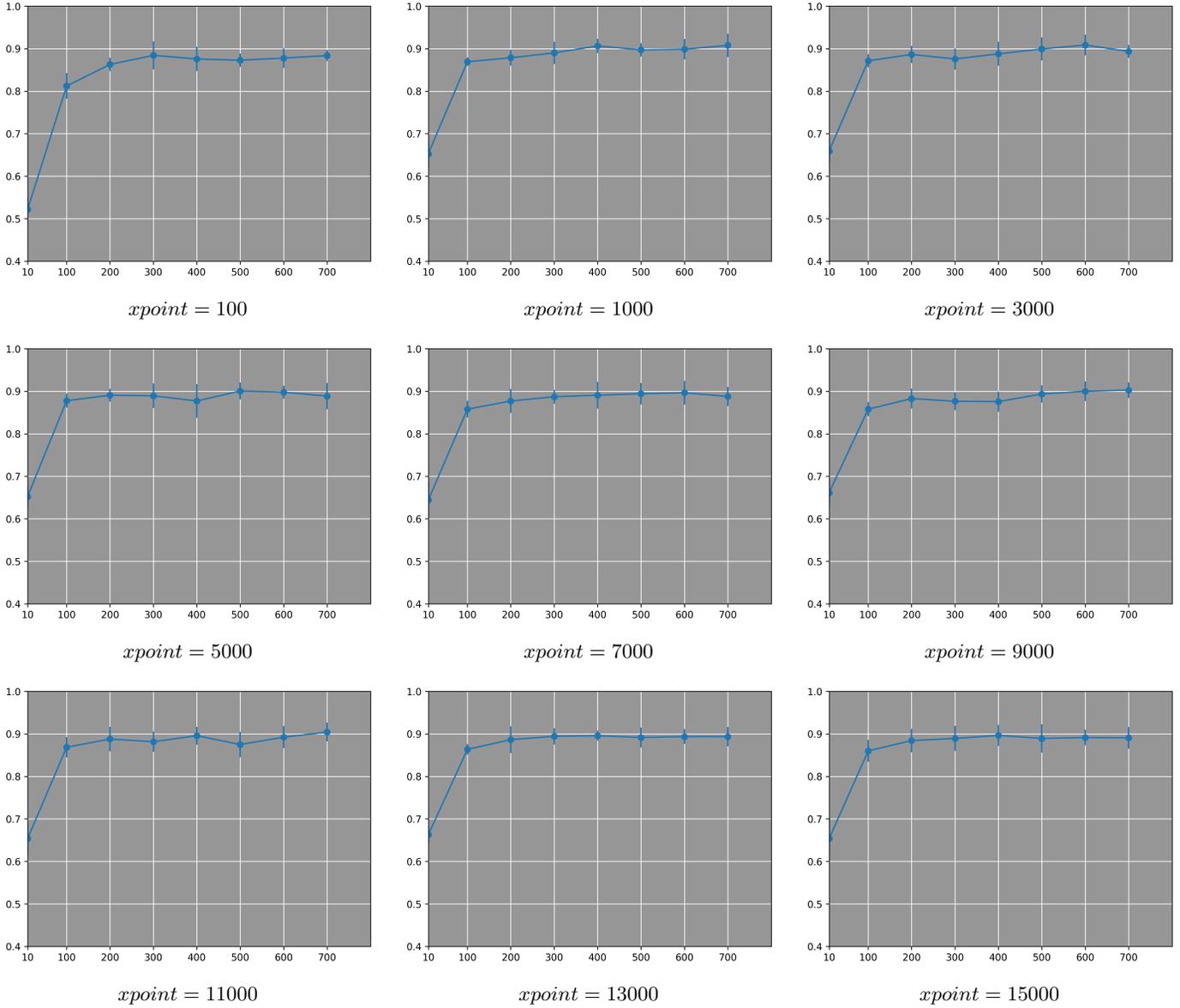}
          \caption*{$xpoint=\x$}
        \end{subfigure}\hfill
        \ifnum\intcalcMod{\i}{3}=0 \par\medskip\fi
      }
    \end{minipage}
  \end{adjustbox}
  \caption{Ablation study of SampEuler by fixing the number of directions sampled and varying the number of filtration values along each direction.}
  \label{fig:xpoints-ablation}
\end{figure}

\newcommand{\klist}{10,100,200,300,400,500,600,700}

\begin{figure}[p]
  \centering
  \begin{adjustbox}{max size={\textwidth}{0.95\textheight}}
    \begin{minipage}{\textwidth}
      \foreach \k [count=\i] in \klist{%
        \begin{subfigure}{0.32\textwidth}
          \centering
          \includegraphics[width=\linewidth,keepaspectratio]%
            {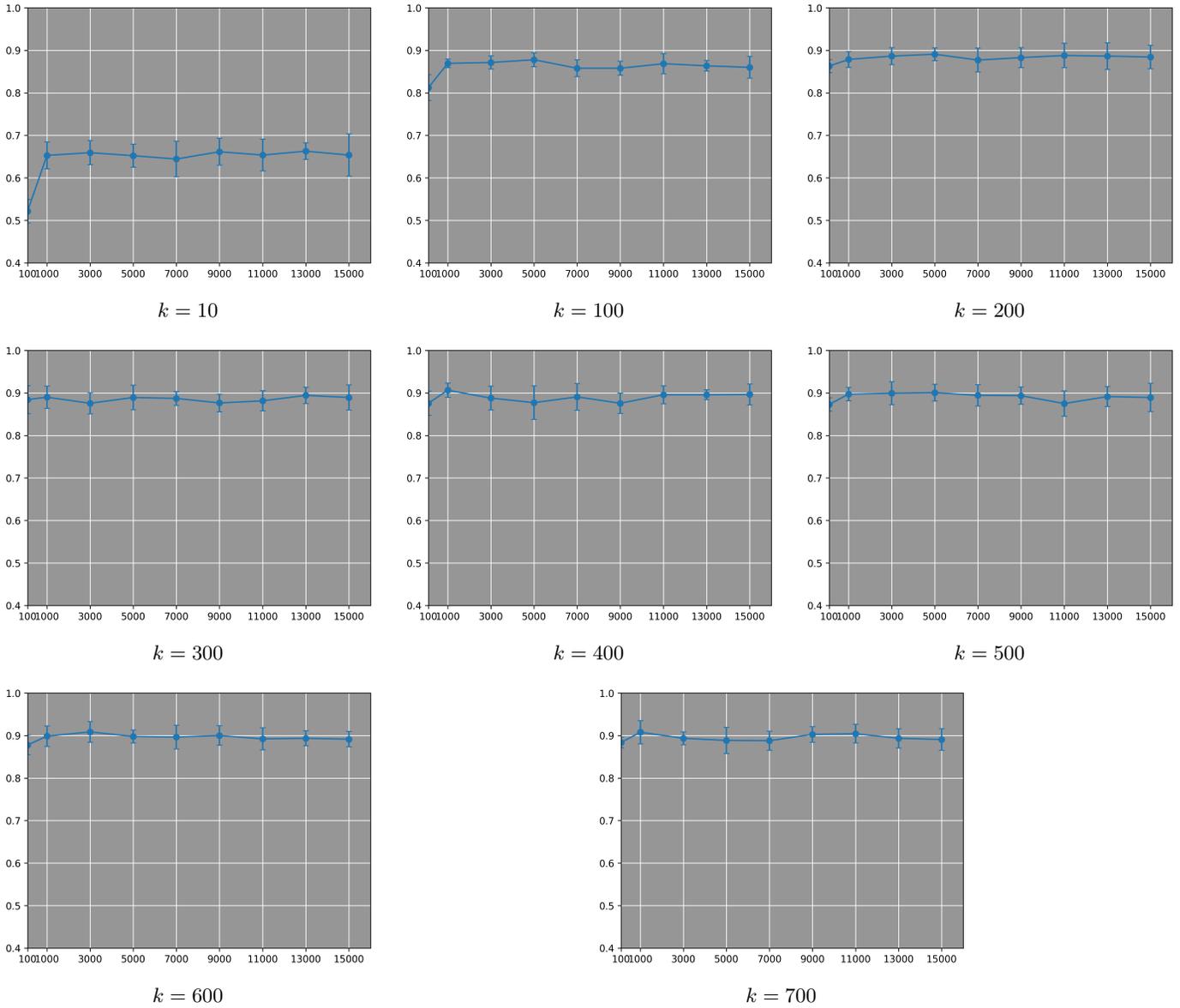}
          \caption*{$k=\k$}
        \end{subfigure}\hfill
        \ifnum\intcalcMod{\i}{3}=0 \par\medskip\fi
      }
    \end{minipage}
  \end{adjustbox}
  \caption{Ablation study of the SampEuler by fixing the number of filtration values along each direction and varying the number of directions sampled.}
  \label{fig:k-ablation}
\end{figure}

The grids in \Cref{fig:xpoints-ablation} and \Cref{fig:k-ablation} vary a single sampling parameter over a wide range for SampEuler alone and confirm that accuracy saturates well at low sampling rates. To examine this low-sampling regime more closely, and to better understand the comparisons with the ECT and DETECT baselines on the same axes, we ran a focused sweep on the full 70-class MPEG-7 dataset: we varied the number of directions $k$ (with $x = 1000$ fixed) and the number of filtration points $x$ (with $k = 100$ fixed) over the low range, and recorded the classification accuracy, the feature-generation time, and the total computation time for SampEuler, ECT, and DETECT (\Cref{fig:r23-ablation}).

As shown in \Cref{fig:r23-acc-k} and \Cref{fig:r23-acc-x}, all three methods quickly reach their peak accuracy, even at low sampling rates. SampEuler attains the highest accuracy, followed by ECT and then DETECT. The reason is that these classification tasks are concerned only with the shape class, so the orientation at which a shape is presented is pure noise; in particular, the MPEG-7 dataset contains the same objects rotated to different angles. The ECT records the full direction information and therefore carries this noise, so two complexes that differ only by a rotation yield very different ECT descriptors and the ECT caps well below SampEuler. SampEuler instead discards the direction labels and converges to the ECT pushforward measure, which is invariant up to isometry, so it is insensitive to this orientation noise and attains higher accuracy. DETECT is lowest throughout, owing to the information loss in its averaging step. The variance of the SampEuler accuracy is also markedly lower than that of ECT and DETECT, consistent with SampEuler being a more stable descriptor of the shape class, in keeping with its theoretical convergence properties. \Cref{fig:r23-time-k} and \Cref{fig:r23-time-x} show the feature-generation time for the three methods. Since all computations are based on the fast ECT implementation of~\cite{lebovici2024efficient}, the feature-generation time is essentially method-independent and scales linearly with both $k$ and $x$; the only minor differences come from the random-direction generation in SampEuler and the smoothing and averaging in DETECT, which are negligible compared with the overall cost. \Cref{fig:r23-total-k} and \Cref{fig:r23-total-x} show the total computation time, including feature generation, distance computation, and SVM training and testing. Here the three methods diverge sharply: SampEuler's total time is higher and grows with both $k$ and $x$, because computing its Wasserstein distance is more expensive than the simple curve distances used by ECT and DETECT, whose total time stays low and nearly flat. This additional cost is nonetheless well justified: SampEuler already attains its peak accuracy in the inexpensive low-sampling regime, and it is, to our knowledge, the only construction that achieves isometry invariance while retaining rich geometric and topological information about the input. It is, moreover, avoidable: used directly as input to the classifier (as in our thymus analysis) or through its vectorization, which is compatible with the $L^2$ metric, SampEuler incurs essentially the same total cost as ECT and DETECT while still outperforming them in accuracy.

\begin{figure}[htbp]
  \centering
  \begin{subfigure}{0.45\textwidth}\centering
    \includegraphics[width=\linewidth]{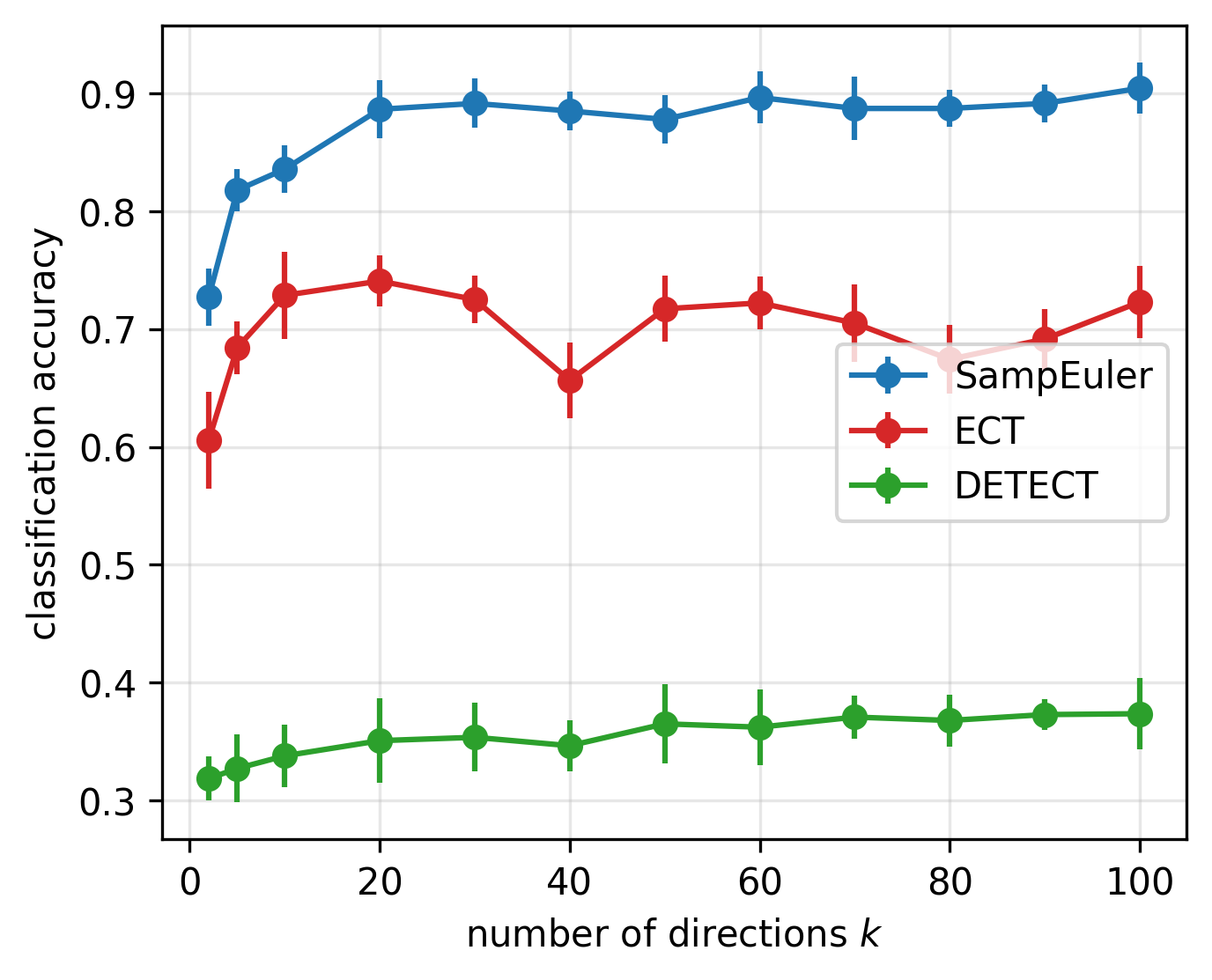}
    \caption{}\label{fig:r23-acc-k}
  \end{subfigure}\hfill
  \begin{subfigure}{0.45\textwidth}\centering
    \includegraphics[width=\linewidth]{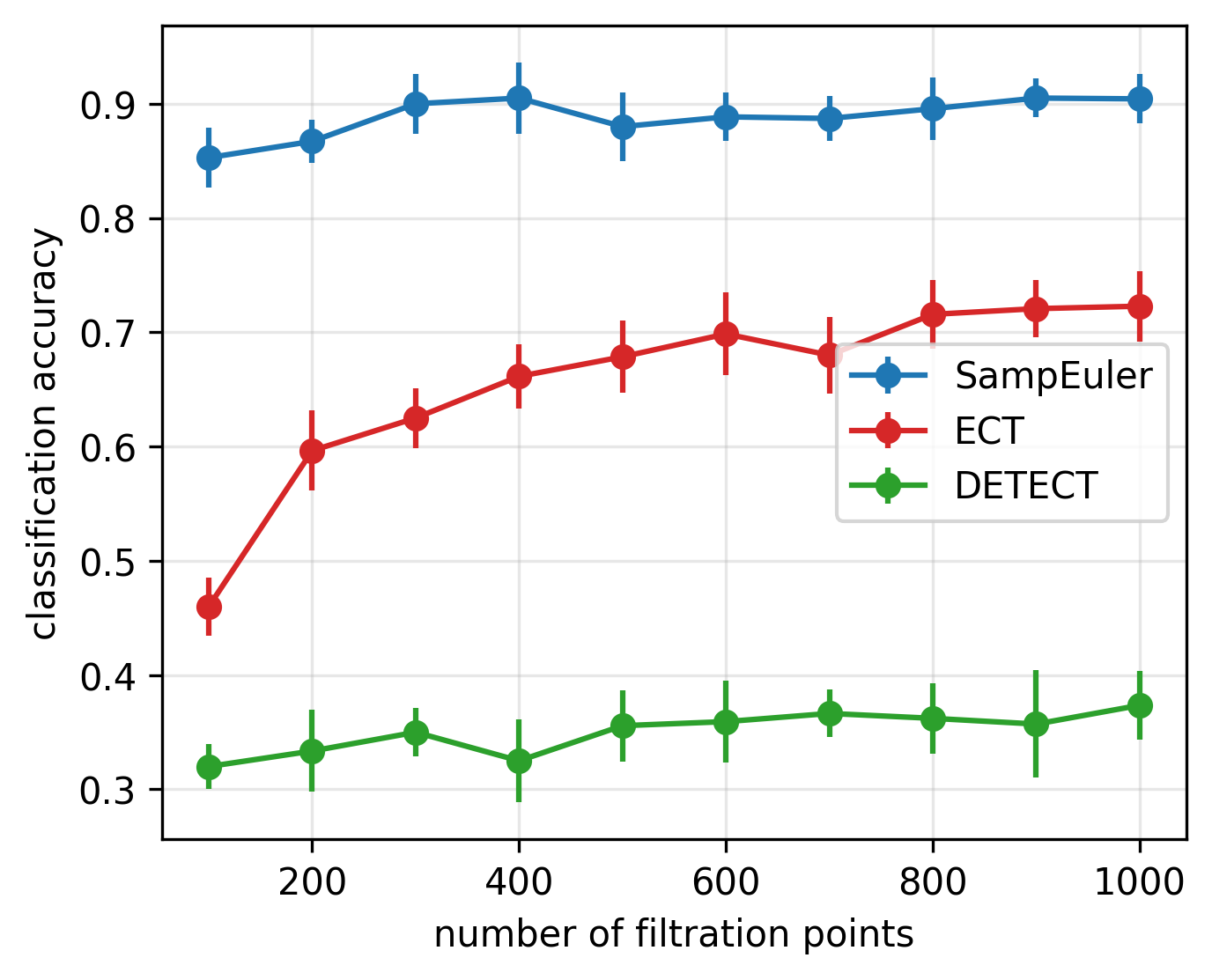}
    \caption{}\label{fig:r23-acc-x}
  \end{subfigure}

  \begin{subfigure}{0.45\textwidth}\centering
    \includegraphics[width=\linewidth]{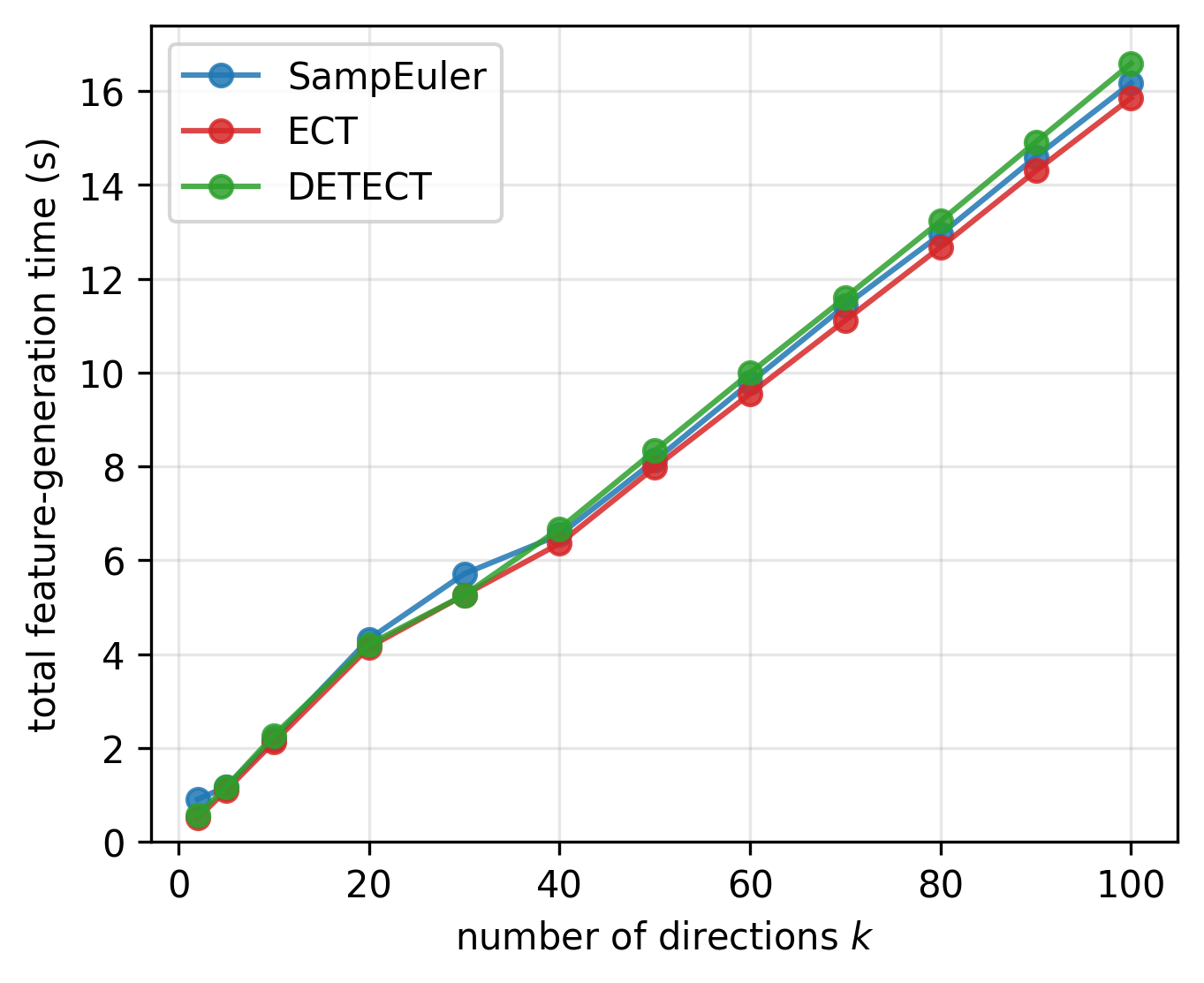}
    \caption{}\label{fig:r23-time-k}
  \end{subfigure}\hfill
  \begin{subfigure}{0.45\textwidth}\centering
    \includegraphics[width=\linewidth]{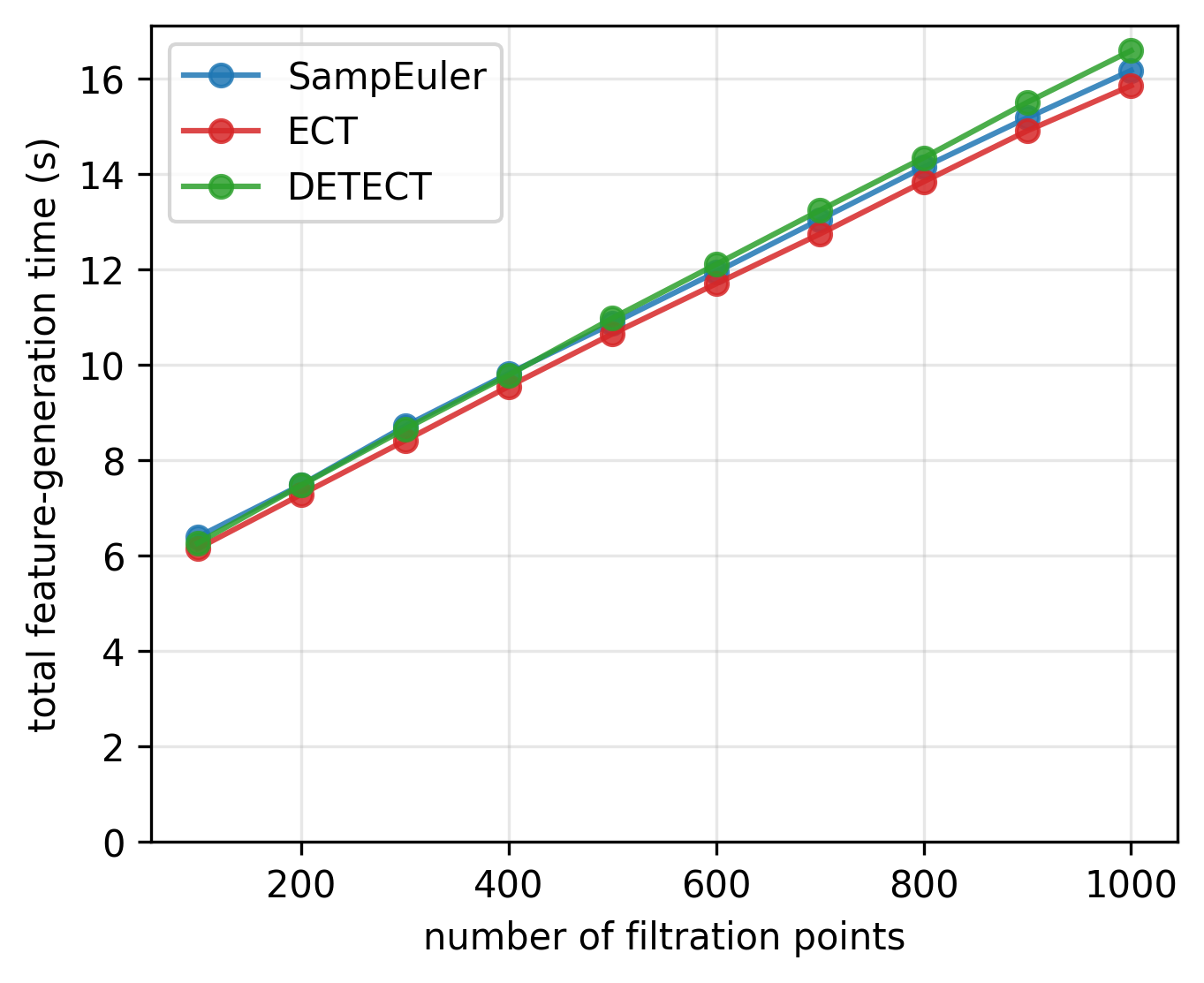}
    \caption{}\label{fig:r23-time-x}
  \end{subfigure}

  \begin{subfigure}{0.45\textwidth}\centering
    \includegraphics[width=\linewidth]{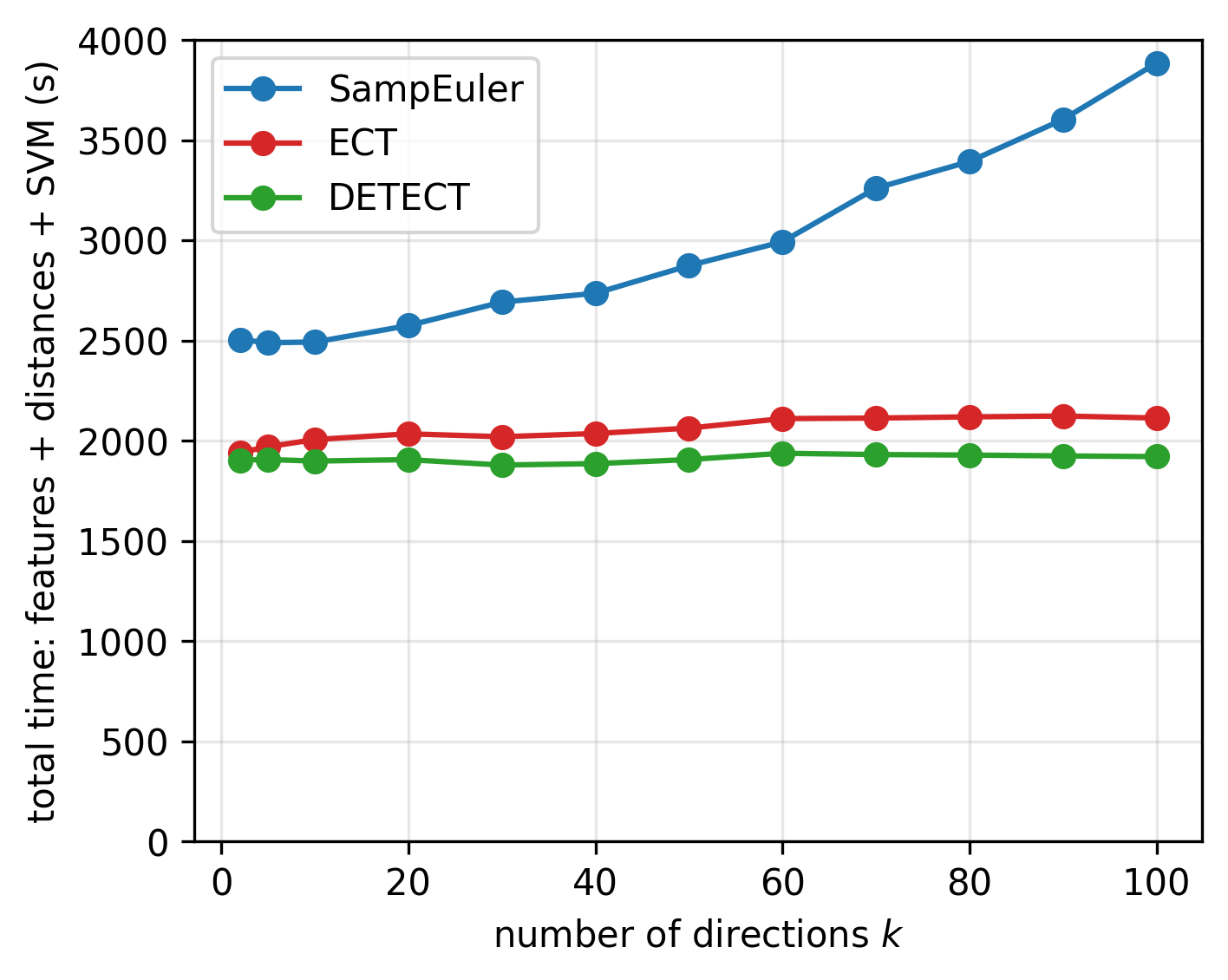}
    \caption{}\label{fig:r23-total-k}
  \end{subfigure}\hfill
  \begin{subfigure}{0.45\textwidth}\centering
    \includegraphics[width=\linewidth]{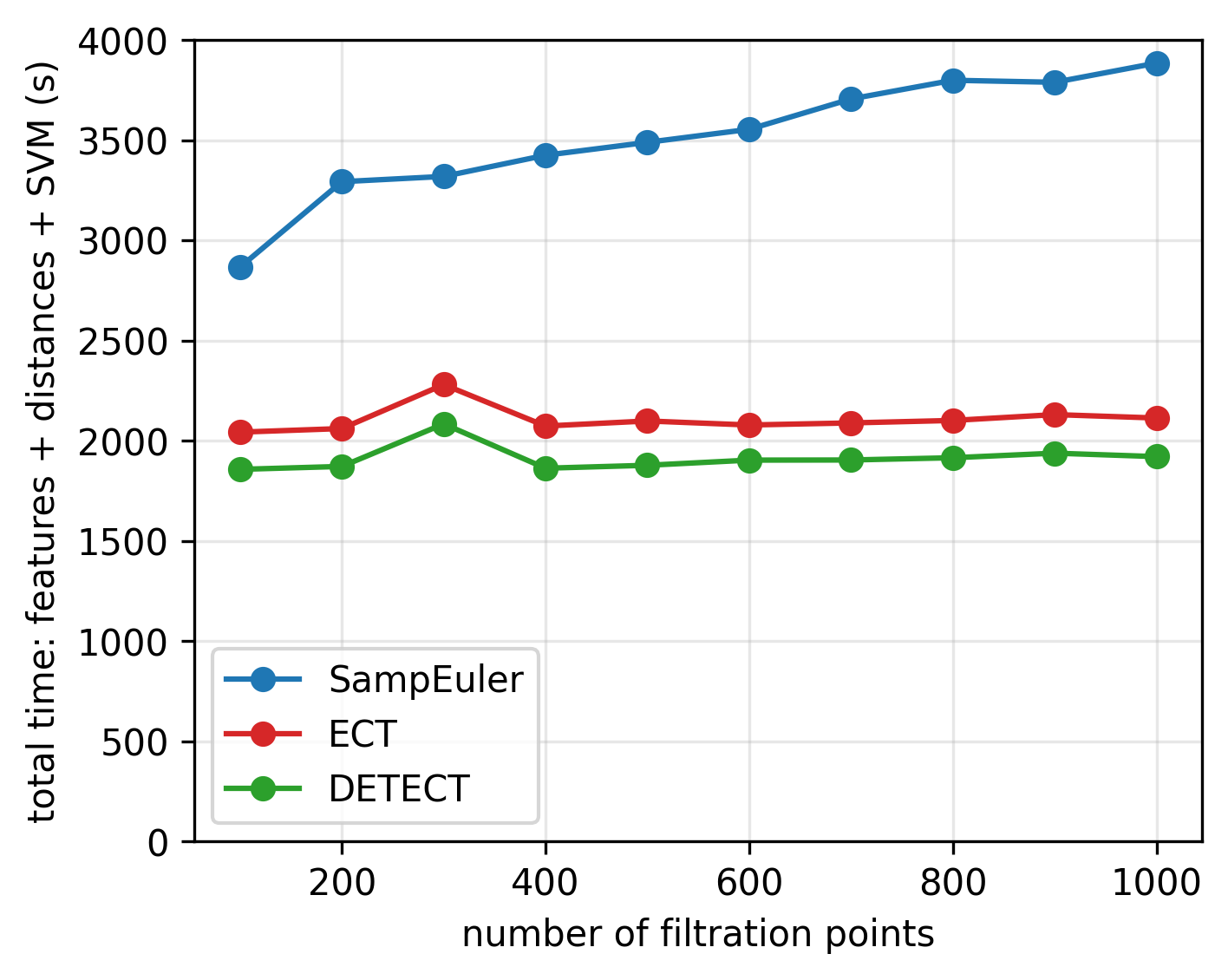}
    \caption{}\label{fig:r23-total-x}
  \end{subfigure}
  \caption{Low-range sampling ablation on the full 70-class MPEG-7 dataset, comparing SampEuler, ECT, and DETECT. Panel (\textit{a}): classification accuracy versus the number of directions $k$ (with $x = 1000$ fixed). Error bars show the standard deviation over $10$ train/test splits. Panel (\textit{b}): classification accuracy versus the number of filtration points $x$ (with $k = 100$ fixed). Error bars show the standard deviation over $10$ train/test splits. Panel (\textit{c}): the feature-generation time over the full dataset versus the number of directions $k$ (with $x = 1000$ fixed). Panel (\textit{d}): the feature-generation time versus the number of filtration points $x$ (with $k = 100$ fixed). Panel (\textit{e}): the total computation time (feature generation, distance computation, and SVM training and testing) versus the number of directions $k$ (with $x = 1000$ fixed). Panel (\textit{f}): the same total computation time versus the number of filtration points $x$ (with $k = 100$ fixed).}
  \label{fig:r23-ablation}
\end{figure}

\subsection{Quantifying Thymus Morphological Difference}
We report mean classification accuracy across 50 trials and the wall-clock runtime of the ECT-based pipelines. For ECT-based methods, we compute 360 directions with 300 points along each direction. SampEulers are computed using the same parameters but with random set of directions. As a baseline, we benchmark against persistence-diagram (PD) features~\cite{adams2017persistence} computed from signed-distance-function (SDF) filtrations~\cite{scikit-fmm}. Concretely, for each binary image we compute its SDF, construct the sublevel-set filtration, and extract PDs in homology dimensions~0 and~1, thereby capturing births and deaths of connected components and loops while encoding geometry via distance to the boundary.
We determine global birth and persistence ranges to set the bounding boxes, discard points with infinite death, and discretize Persistence Images (PIs) with a pixel size of \(\tfrac{\text{global persistence range}}{500}\).
PIs use a Gaussian kernel with \(\sigma = 1.0\) and uniform weights; the images are then flattened into feature vectors. For both the ECT-based features and PIs, we train a random-forest classifier: we select hyperparameters via a fixed grid, refit with the best setting, and evaluate across trials.

We also compare against state-of-the-art deep learning models for image classification~\cite{krizhevsky2012imagenet,dosovitskiy2020image,sandler2018mobilenetv2,tan2019efficientnet}.
For AlexNet, ViT, MobileNetV2, and EfficientNet-B0, we initialize from ImageNet-pretrained weights and replace the final layer with a two-way linear head.
Inputs are resized to $224 \times 224$ RGB and normalized with ImageNet statistics. In each trial, we train using stochastic gradient descent (learning rate \(10^{-3}\), momentum \(0.9\)), batch size \(16\), and cross-entropy loss for \(5\) epochs.

\begin{longtable}{L{4.2cm} L{2.3cm} L{2.3cm} L{1.8cm} L{1.8cm}}
\caption{The average accuracy $(\%)$ and run time of different methods on thymus K8 quadrants and K14 quadrants.}
\label{table: accuracy_runtime} \\

\toprule
\textbf{Method} & \textbf{K8 Acc.} & \textbf{K14 Acc.} & \textbf{K8 Time} & \textbf{K14 Time} \\
\midrule
\endfirsthead

\multicolumn{5}{c}{{\bfseries \tablename\ \thetable{} -- continued from previous page}} \\
\toprule
\textbf{Method} & \textbf{K8 Acc.} & \textbf{K14 Acc.} & \textbf{K8 Time} & \textbf{K14 Time} \\
\midrule
\endhead

\midrule
\multicolumn{5}{r}{{Continued on next page}} \\
\endfoot

\bottomrule
\endlastfoot

PH Image \cite{adams2017persistence} & 0.659$\pm$0.119 & 0.698$\pm$0.120 & 2m 46.5s & 3m 14.2s \\
\hline
AlexNet \cite{krizhevsky2012imagenet} & 0.697$\pm$0.135 & 0.690$\pm$0.112 & 4m 18.6s & 4m 17.7s \\
\hline
Vision Transformer \cite{dosovitskiy2020image} & 0.680$\pm$0.143 & 0.641$\pm$0.118 & 32m 0.2s & 31m 57.5s \\
\hline
MobileNet V2 \cite{sandler2018mobilenetv2} & 0.636$\pm$0.119 & 0.687$\pm$0.113 & 24m 54.5s & 24m 41.4s \\
\hline
EfficientNetB0 \cite{tan2019efficientnet} & 0.687$\pm$0.150 & 0.681$\pm$0.124 & 34m 23.8s & 34m 15.1s \\
\hline
ECT & 0.685$\pm$0.100 & 0.684$\pm$0.089 & 58.2s & 1m 9.7s \\
\hline
DETECT & 0.700$\pm$0.129 & 0.684$\pm$0.106 & 31.3s & 29.8s \\
\hline
SampEuler & \textbf{0.713$\pm$0.108} & 0.723$\pm$0.106 & 46.5s & 42.4s \\
\hline
vectorized SampEuler & 0.689$\pm$0.116 & \textbf{0.727$\pm$0.102} & 33.6s & 28.6s \\

\end{longtable}

We use established methods to confirm our interpretations of the thymus structures. We compute the zeroth and first Betti numbers of images for the number of connected components and loops. The results are plotted as violin plots in \Cref{fig:combined_violin}. For K8, the violin plots for the zeroth Betti number show that young quadrants have a slightly higher number of connected components.  Similarly, Betti-1 plots show that old quadrants have a higher rank for $H_1$, meaning that old quadrants contain more loops. For K14, the violin plots for Betti-0 and Betti-1 both show that the old quadrants have a much larger range of variation compared with the young quadrants. These observations are consistent with our interpretations of the SHAP scores of vectorized SampEuler features. To illustrate the connection of the SHAP scores to the underlying geometry, we show two old K8 quadrants at the extremes of the SHAP-based score in \Cref{fig:k8_shap_examples}. The high-scoring quadrant is a well-connected network that forms many loops, following the decreasing trend emphasised in the SHAP map. The low-scoring quadrant is sparse and disconnected.

To assess the density of holes in the cortex component, we apply the signed distance function filtration to each image so that the centres of the holes have negative distances. Now, the persistence of zeroth-degree features in persistent homology represents how dense the holes are. The further away the holes are, the longer these zeroth-degree features will persist. As shown in \Cref{fig:combined_pers_image}, the young quadrants have more persistent characteristics than the old quadrants. These observations are consistent with our interpretation from the gradient of vectorized SampEuler computation.

\begin{figure}[htbp]
  \centering
  \begin{subfigure}[b]{0.60\linewidth}
    \centering
    \includegraphics[width=\linewidth]{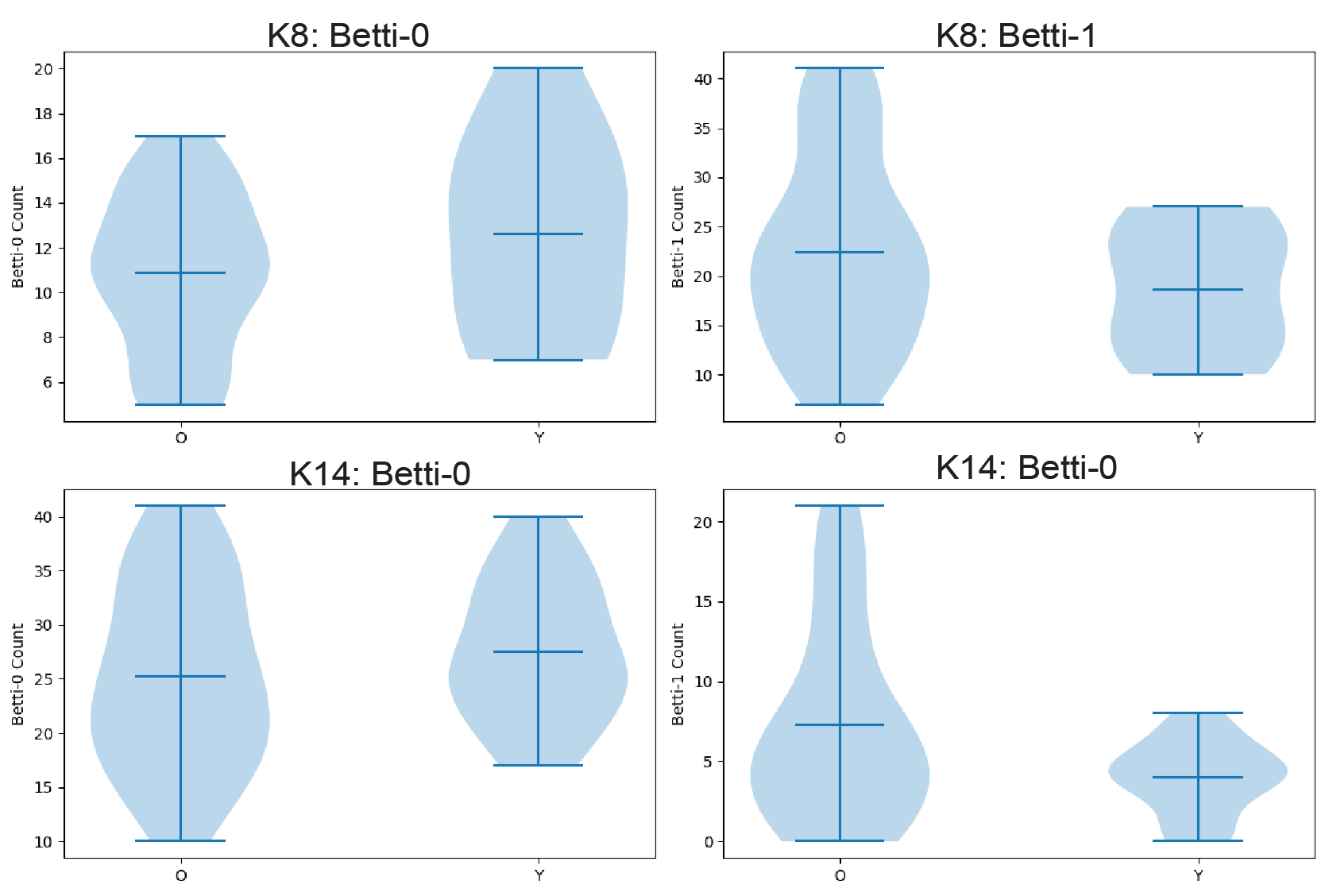}
    \caption{Violin plots of Betti numbers 0 \& 1.}
    \label{fig:combined_violin}
  \end{subfigure}%
  \hfill
  \begin{subfigure}[b]{0.35\linewidth}
    \centering
    \includegraphics[width=\linewidth]{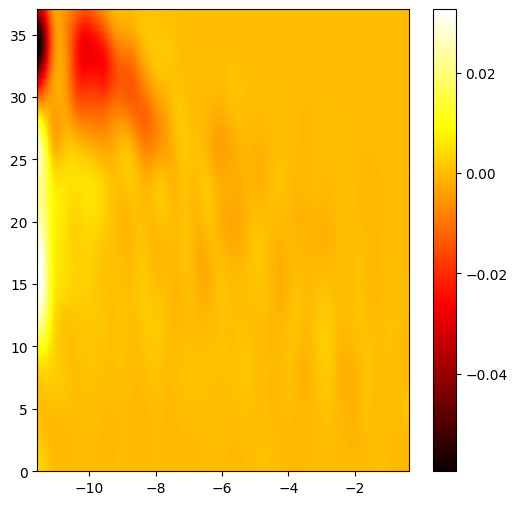}
    \caption{Difference of persistence images: old -- young.}
    \label{fig:combined_pers_image}
  \end{subfigure}
  \caption{%
    (a) Violin plots of Betti number 0 and Betti number 1 from top 5\% to 95\% of samples. 
    (b) Difference of persistence images: \(\mathrm{P}_{\mathrm{old}} - \mathrm{P}_{\mathrm{young}}\).
  }
  \label{fig:combined_persistence}
\end{figure}

\begin{figure}[htbp]
  \centering
  \includegraphics[width=0.6\linewidth]{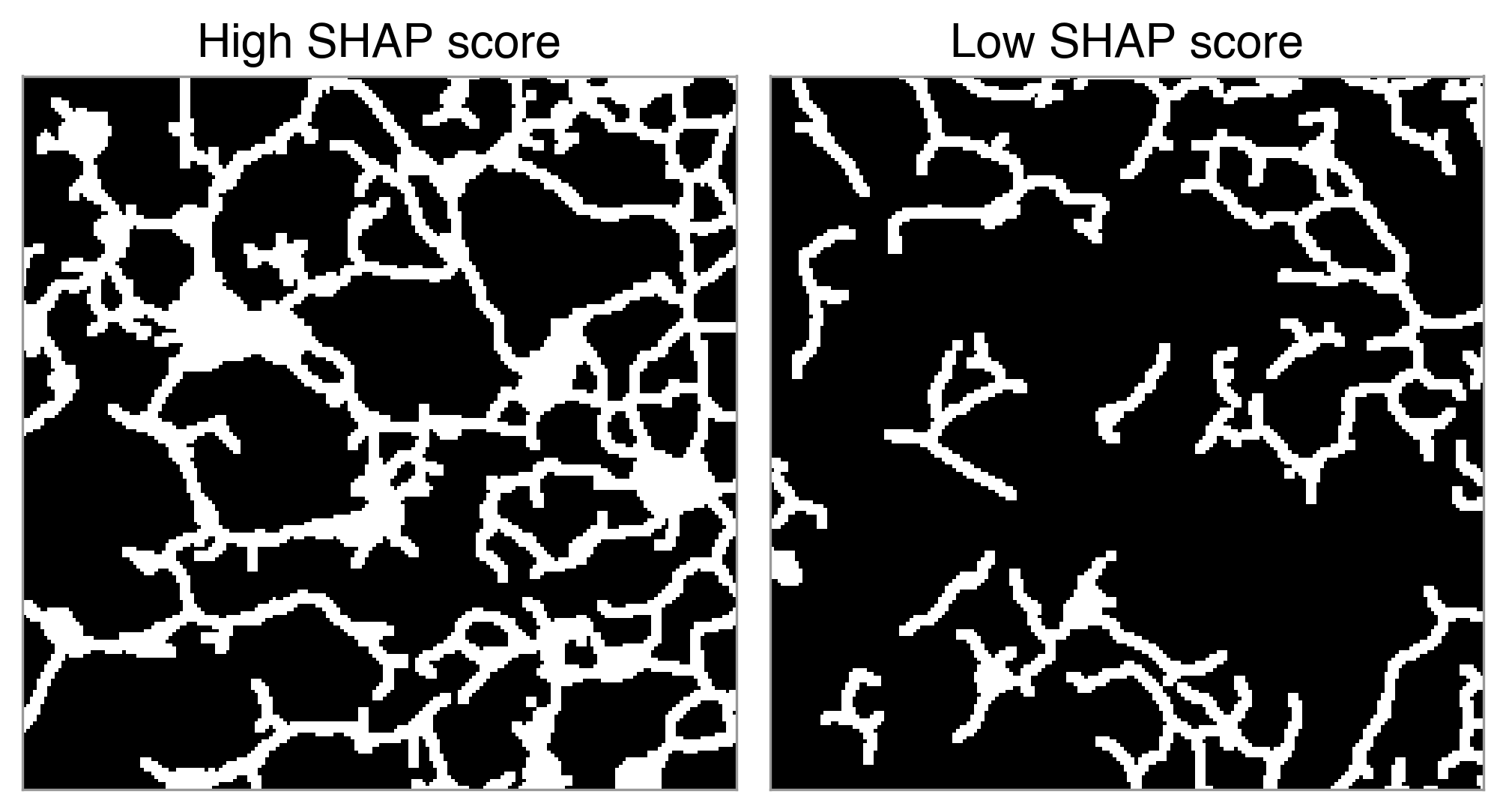}
  \caption{Two old K8 quadrants at the extremes of a SHAP-based score. The high-scoring quadrant (left) is a well-connected network that forms many loops, following the decreasing trend emphasised in the SHAP map; the low-scoring quadrant (right) is sparse and disconnected.}
  \label{fig:k8_shap_examples}
\end{figure}

\subsection{Depth Analysis}
We apply depth analysis of the cortical region to preserve the spatial heterogeneity. Since the thymus and medulla boundaries are not homotopic, we choose the following definition as the notion of depth.

\begin{definition}
    Given a pixel $p$ in the cortical region of the thymus, we define: 
        $$d_p : = \min\{d(p, x), \text{ for }x \text{ in medulla}\}.$$
     where $d$ is the usual Euclidean distance. For a quadrant $Q$ in thymus $T$, we define: 
        $$d_Q := \frac{1}{|Q|}\sum_{p\in Q} d_p.$$
    We define the normalized depth of $Q$ as follows:
        $$D(Q) := 1 - \frac{d_Q}{\max_{Q'\in T} d_{Q'}}.$$
\end{definition}

\begin{remark}
    Since the medulla region consists of more than one connected component in all thymic samples, there is no homotopy between the thymic capsule and the cortex-medulla junction (CMJ). The above notion of depth is used when the CMJ is of interest to the study. 
    
    One could also define the distance $d_p$ as the minimal distance to any thymic boundary point and obtain another notion of normalized depth with 0 at the capsule and 1 at the centre of the thymus. This notion of depth is more suitable when the whole thymus is of interest. We use it later for the DN cell analysis.
\end{remark}

Based on the normalized depth of a quadrant, we construct the following depth kernel for each quadrant: 
    $$w_Q(t) = \exp\left(-\frac{1}{2}\left(\frac{D(Q)-t}{h}\right)^2\right).$$
We use $h=0.1$ and normalize the weight by each age group $A$ and get the normalized kernel $\tilde{w}_Q(t) = \frac{w_Q(t)}{\sum_{Q'\in A}w_{Q'}(t)}$. To check that the depth notion is consistent for both age groups, we plot the distribution of quadrants at various depth intervals in \Cref{fig: distance_distribution}. The result suggests that the shrinkage in size of the thymus due to ageing does not affect our notion of depth, and so it is suitable to compare quadrants at the same depth from different age groups to preserve the spatial heterogeneity of the cortical region. 

\begin{figure}[!htbp]
    \centering
    \includegraphics[width=0.4\linewidth]{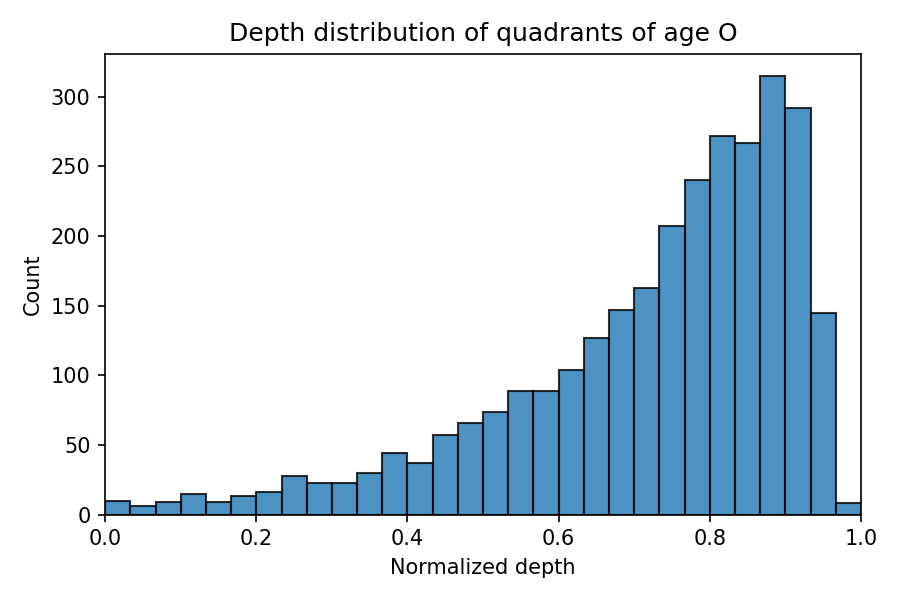}
    \includegraphics[width=0.4\linewidth]{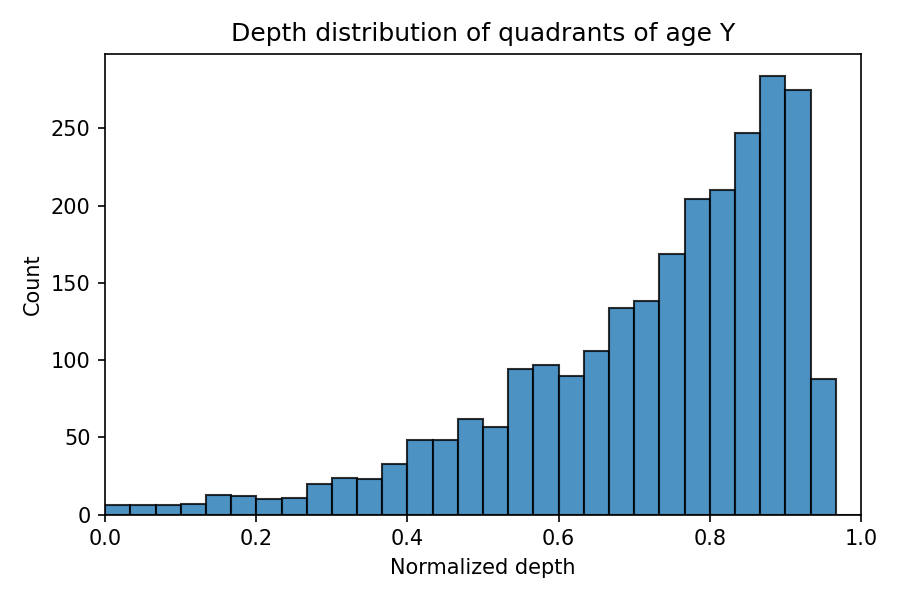}
    \caption{Depth distributions of quadrants of each age group. The x-axis is the normalized depth $D(Q)$ of each quadrant $Q$, and the y-axis is the count of quadrants having depth in a given depth interval.}
    \label{fig: distance_distribution}
\end{figure}

For depth $t$, the normalized kernels define probability measures on quadrants for each age group,
\begin{align}
P_Y^t &= \sum_{Q \in Y} \tilde w_Q^Y(t)\,\delta_Q, \\
P_O^t &= \sum_{Q \in O} \tilde w_Q^O(t)\,\delta_Q,
\end{align}
where $\delta_Q$ is the Dirac measure of the corresponding feature of $Q$. We can then compute the energy distance \cite{szekely2013energy} $E(P_Y^t, P_O^t)$ between the two measures using the pairwise distance matrix $M$ of quadrant features by 
\begin{equation}
\begin{aligned}
E(P_Y^t, P_O^t) &= 2\left( \sum_{i\in Y}\sum_{j\in O} \tilde{w}_i^Y(t)\tilde{w}_j^O(t)M_{ij}\right) \\
&\quad - \sum_{i,j\in O} \tilde{w}_i^O(t)\tilde{w}_j^O(t)M_{ij}
    - \sum_{i,j\in Y} \tilde{w}_i^Y(t)\tilde{w}_j^Y(t)M_{ij}.
\end{aligned}
\end{equation}

\subsection{Cell Distribution vs Morphology}
For the same set of mouse thymi, we obtained cell‐type and spatial locations for all cells within each thymus. Our goal is to characterize and compare age‐related changes within the thymus and the epithelial architecture of the thymus. To this end, we perform k-medoids clustering \cite{kaufman2009finding} separately on cell-location features and on shape descriptors of thymic quadrants, and then compare the resulting cluster partitions.

To allow faster computation of the pairwise Wasserstein distance matrix while keeping sufficient information, we sample $100$ directions with $3000$ points along each direction, and we use the sliced Wasserstein distance algorithm \cite{bonneel2015sliced} with $50$ slices. 

Since we focus on the cortical compartment, and for simplicity, we restrict the compositional analysis of haematopoietic cells to \textbf{DN3, Presel DP, Postsel DP (CD69+)} and \textbf{Postsel DP (CD69-)} thymocytes. These cell types not only constitute the vast majority of cortical thymocytes but are also centered around the developmentally-relevant process of positive selection.

We discuss how to choose the number of clusters for both clusterings. We first applied silhouette analysis \cite{kaufman2009finding} to find the most suitable number of clusters from the set $\{2,3,\dots,10\}$ for both k-medoids clusterings. The analysis suggests $k=2$ for both the clustering based on enrichment ratios and the clustering based on SampEulers. The results are shown in \Cref{figure: 2-cluster} and \Cref{fig:2-cluster(2)}. Two clusters are not able to provide meaningful biological interpretations about the impact of the ageing process. Therefore, we manually force three clusters for both clusterings as the second-highest silhouette score choice. To further constrain the analysis to look at cells in direct contact with thymic epithelial cells (TECs), we filter to only count cells of selected cell types that are at a distance less than or equal to $5\text{ }\mu \text{m}$ from a TEC. The results are shown in \Cref{figure: 3vs3-cluster} and \Cref{fig:3vs3-cluster(2)}.
\begin{figure}
  \centering
  \includegraphics[width=\textwidth, 
                   height=0.95\textheight, 
                   keepaspectratio]{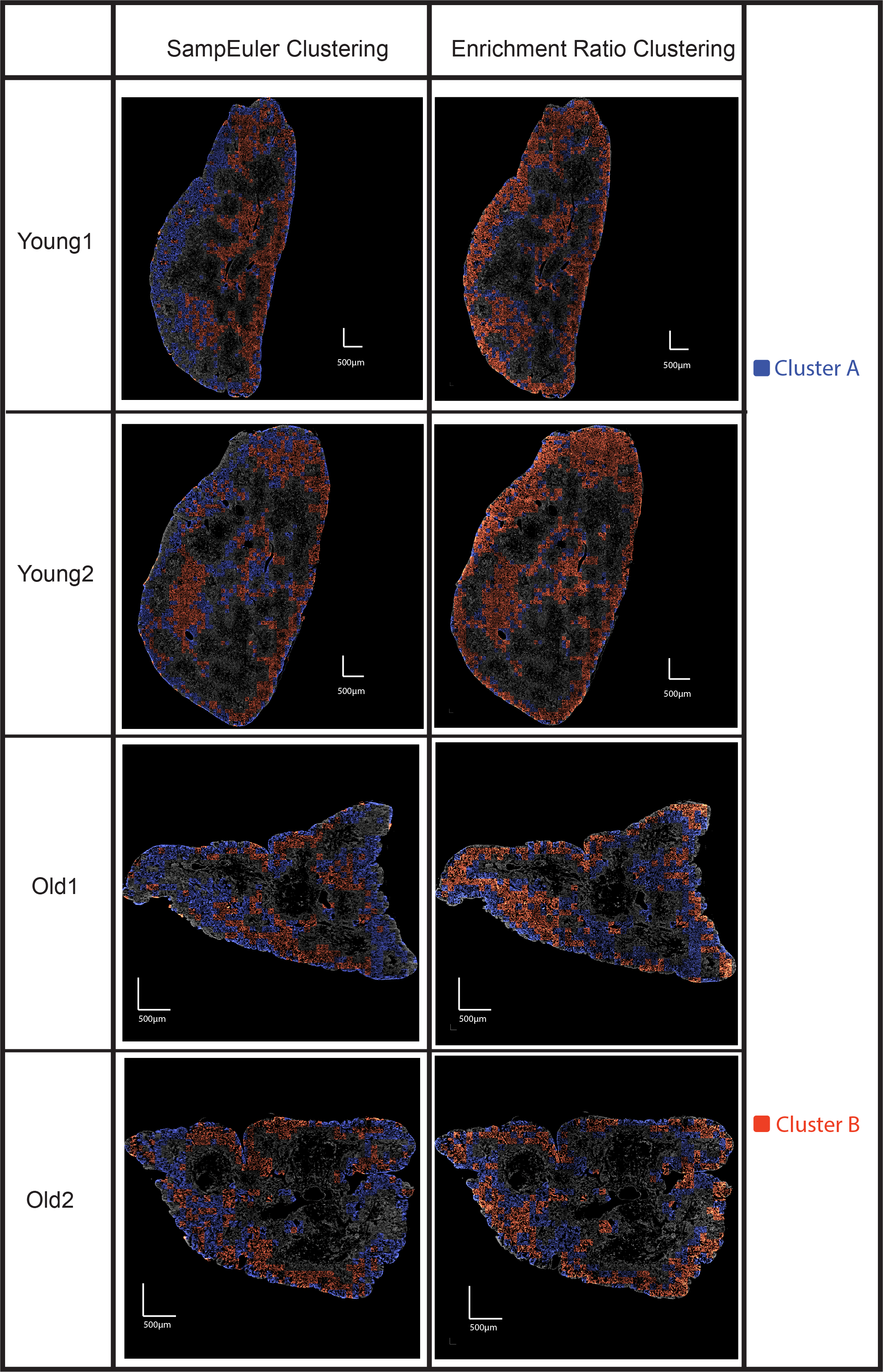}
  \caption{ K-medoids clustering results with 2 clusters for all thymic cortex quadrants. This plot contains result for the first four thymi, the remaining results are included in \Cref{fig:2-cluster(2)}. The first column on the left is the results using SampEulers, the second column is the results using cell enrichment ratios.}
  \label{figure: 2-cluster}
\end{figure}
\begin{figure}
    \centering
    \includegraphics[width=\textwidth, 
                   height=0.95\textheight, 
                   keepaspectratio]{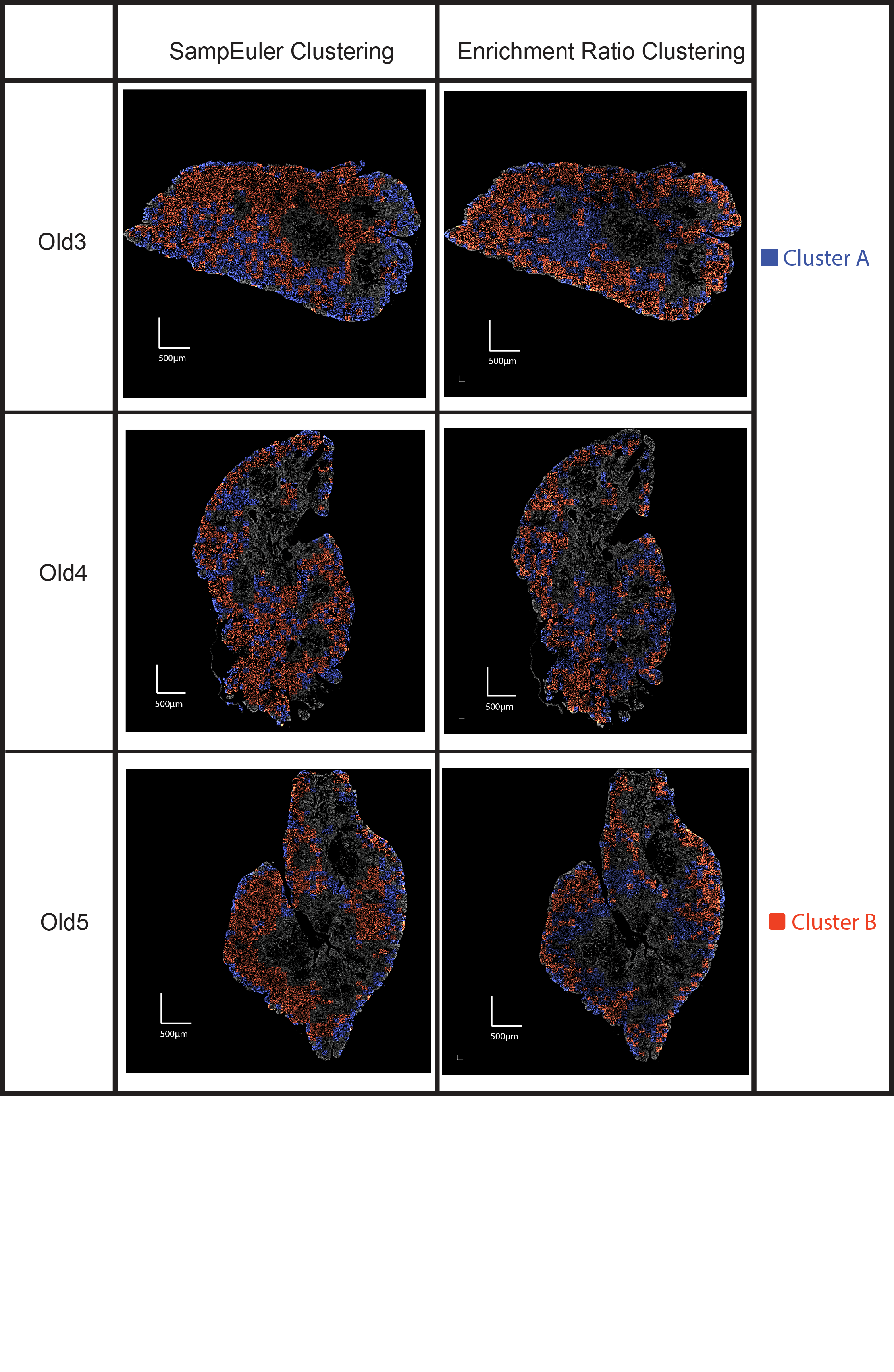}
    \caption{K-medoids clustering results with 2 clusters for all thymic cortex quadrants. This plot contains result for the remaining thymi continuing from \Cref{figure: 2-cluster}. The first column on the left is the results using SampEulers, the second column is the results using cell enrichment ratios.}
    \label{fig:2-cluster(2)}
\end{figure}
\begin{figure}
  \centering
  \includegraphics[width=\textwidth, 
                   height=0.95\textheight, 
                   keepaspectratio]{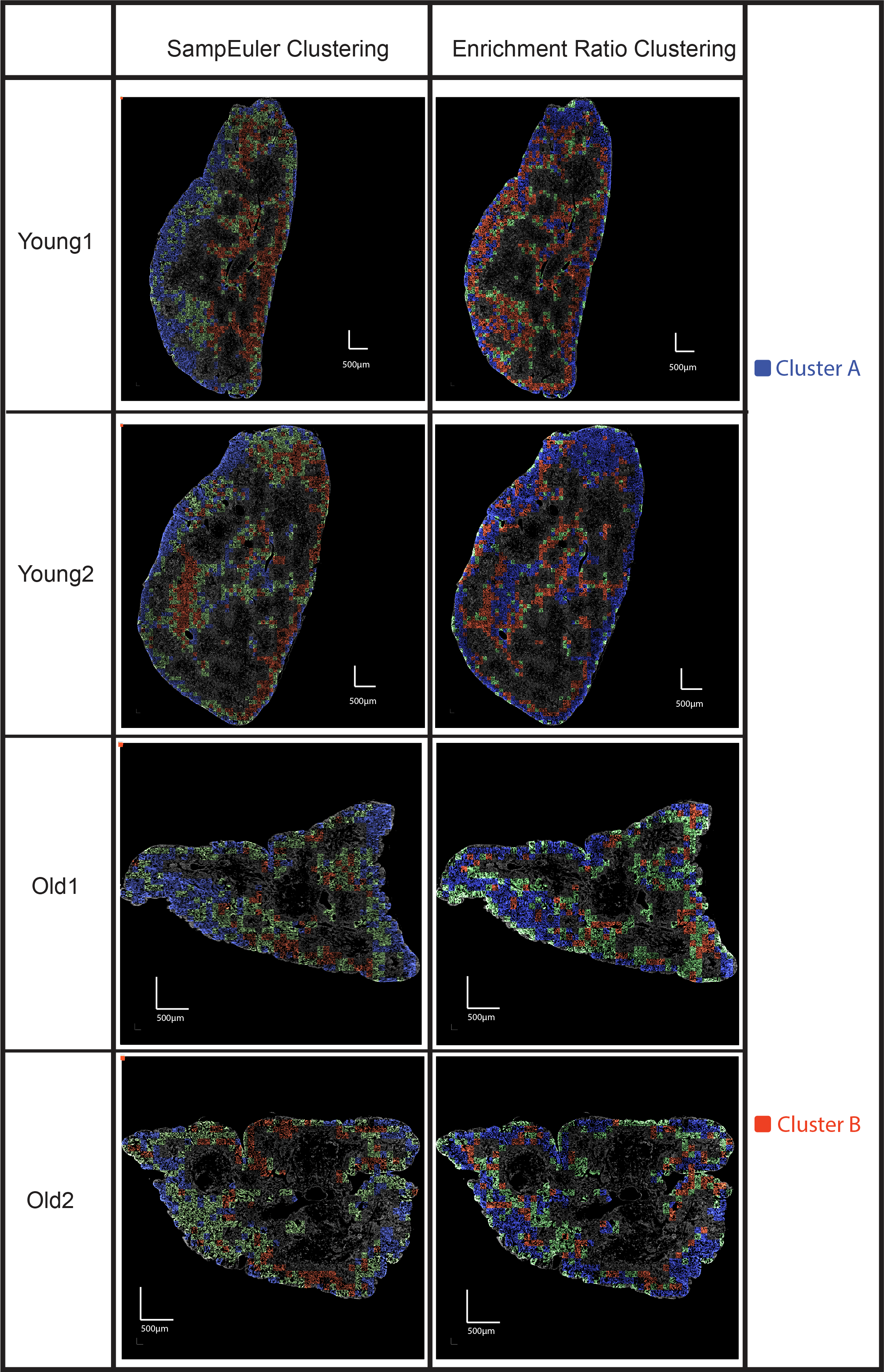}
  \caption{ K-medoids clustering results for all thymic cortex quadrants using 3 clusters. This plot contains results for the first four thymi, the remaining results are included in \Cref{fig:3vs3-cluster(2)}. The first column on the left is the results using SampEulers, the second column is the results using selected cell enrichment ratios.}
  \label{figure: 3vs3-cluster}
\end{figure}

\begin{figure}
    \centering
    \includegraphics[width=\textwidth, 
                   height=0.95\textheight, 
                   keepaspectratio]{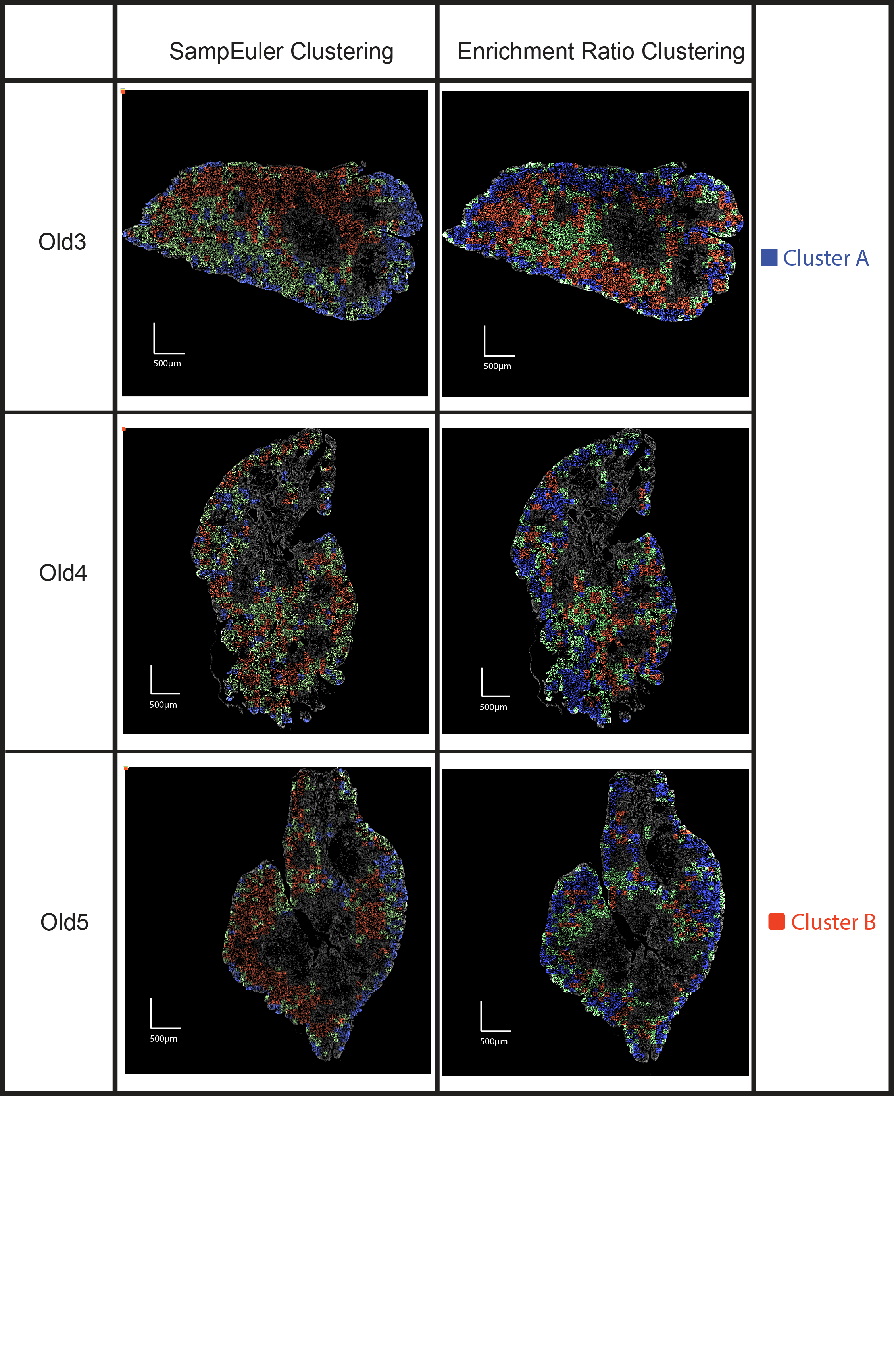}
    \caption{K-medoids clustering results for all thymic cortex quadrants using 3 clusters. This plot contains results for the remaining thymi continuing from \Cref{figure: 3vs3-cluster}. The first column on the left is the results using SampEulers, the second column is the results using cell enrichment ratios.}
    \label{fig:3vs3-cluster(2)}
\end{figure}

\begin{figure}
    \centering
    \includegraphics[width=\textwidth, 
                     height=0.95\textheight, 
                     keepaspectratio]{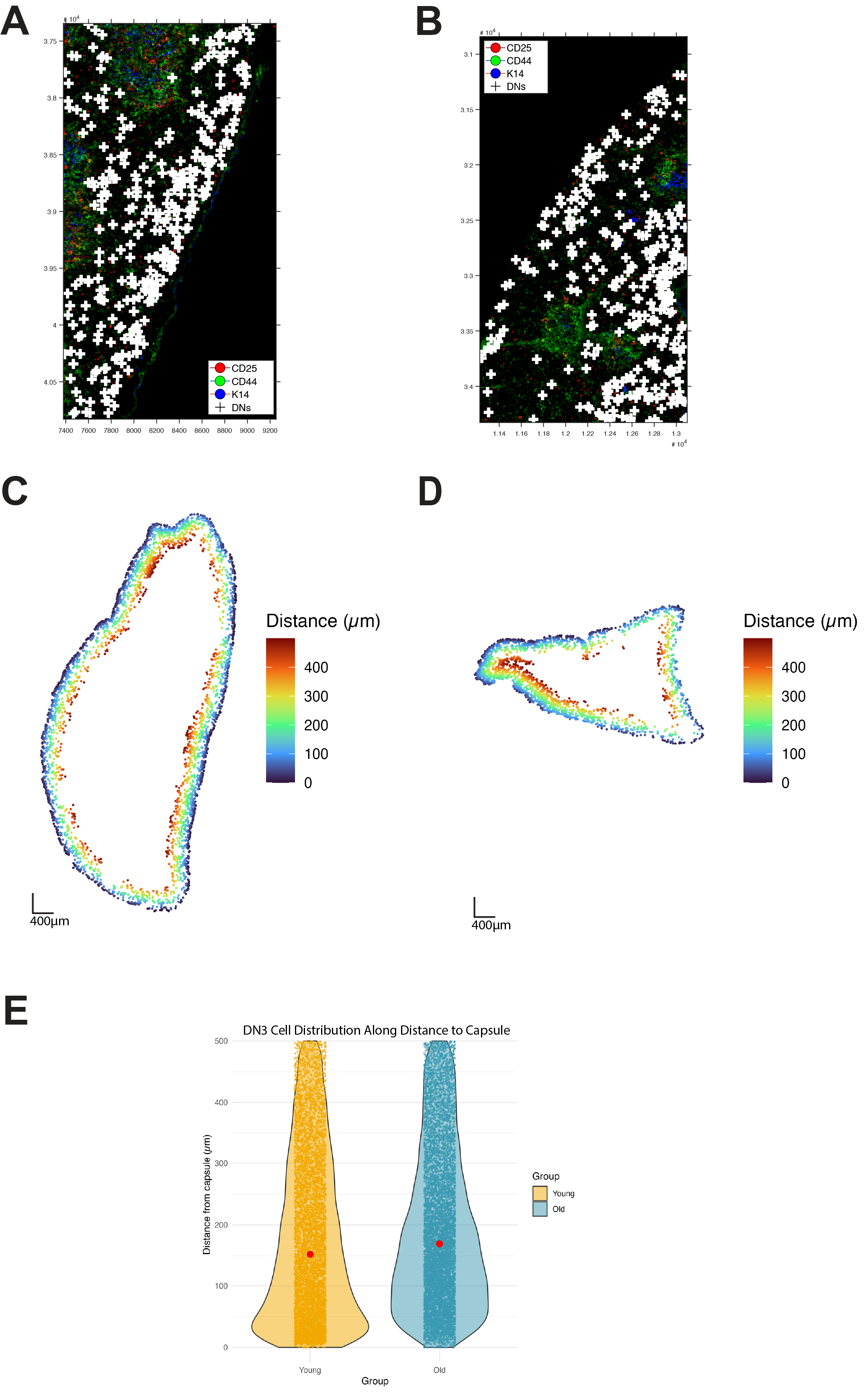}
    \caption{Qualitative assessment of DN3 localization characterized by CD4-, CD8a-, CD5-, CD44-, CD25+, CD45+ for a young \textit{A} and an old thymus tissue \textit{B}. A close-up of the sub-capsular/deep-cortical area is represented for each age. One x,y-unit corresponds to 0.5 $\mu$m. \textit{C} and \textit{D} Spatial scatter plots illustrating the spatial location of DN3s within a set distance of 500 $\mu$m from the capsule for a young and an old thymus, respectively. \textit{E} Violin plot of individual DN3 distances from the capsule split by age group. The quantification is based on the whole dataset (2 young and 5 old samples). These distributions were compared using a two-sample t-test (p = 6.3e-11) and a Wilcox test (p = 3.1e-18).}
    \label{figure: DN_plots}
\end{figure}

\begin{longtable}{L{1.2cm} L{1.5cm} L{1.8cm} L{2.0cm} L{2.3cm} L{1.6cm} L{1.2cm} L{1.5cm}}
\caption{PhenoCycler antibody panel with host species, clone, vendor, catalogue number, fluorophore, dilution, and exposure time. DAPI was imaged at every round. For the other markers, the order in which they appear in the table represents the order in which they were imaged in groups of three (see Materials and Methods).
Ar Hms = Armenian hamster, Ms = mouse, Rb = rabbit, Rt = rat, Sy Hms = Syrian hamster}
\label{CODEXantibodies}\\
\toprule
Marker & Host species & Clone & Vendor & Catalogue number & Fluorophore & Dilution & Exposure time [ms] \\
\midrule
\endfirsthead
\multicolumn{6}{c}{{\bfseries \tablename\ \thetable{} -- continued from previous page}} \\
\toprule
Marker & Host species & Clone & Vendor & Catalogue number & Fluorophore & Dilution & Exposure time [ms] \\ 
\midrule
\endhead
\midrule
\multicolumn{6}{r}{{Continued on next page}} \\
\endfoot

\bottomrule
\endlastfoot

DAPI  & -       & -         & -            & -            & -        & 1:1000 & 150 \\
\hline
AIRE  & Rt      & 5H12      & Invitrogen   & 14-5934-82   & Cy5      & 1:200 & 200 \\
\hline
B220  & Rt      & Ra3-6B2   & Akoya        & 4150006      & AF488    & 1:200 & 200 \\
\hline
CD8A  & Rt      & 53-6.7    & Akoya        & 4250017      & ATTO550  & 1:200 & 150 \\
\hline
FOXP3 & Rt      & FJK-16s   & Invitrogen   & 14-5773-82   & Cy5      & 1:200 & 200 \\
\hline
TCRGD & Ar Hms  & GL3       & BioLegend    & 118128       & AF488    & 1:100 & 300 \\
\hline
NP63  & Rb      & polyclonal& Proteintech  & 12143-1-AP   & ATTO550  & 1:200 & 300 \\
\hline
PD1   & Rt      & 29F.1A12  & Invitrogen   & P378-1MG     & Cy5      & 1:200 & 300 \\
\hline
CD86  & Rt      & GL-1      & Invitrogen   & MA1-10299    & AF488    & 1:200 & 300 \\
\hline
CD69  & Ar Hms  & H1.2F3    & Invitrogen   & 14-0691-82   & ATTO550  & 1:200 & 300 \\
\hline
CD11C & Ar Hms  & N418      & Akoya        & 4350013      & Cy5      & 1:200 & 300 \\
\hline
CD45  & Rt      & 30-F11    & Akoya        & 4150002      & AF488    & 1:200 & 150 \\
\hline
ICOS  & Ar Hms  & C398.4A   & Invitrogen   & 14-9949-82   & ATTO550  & 1:200 & 200 \\
\hline
XCR1  & Ms      & ZET       & BioLegend    & 148202       & Cy5      & 1:100 & 300 \\
\hline
PDCA1 & Rt      & eBio927   & Invitrogen   & 16-3172-81   & AF488    & 1:200 & 300 \\
\hline
SIRPA & Rt      & P84       & Invitrogen   & 16-1721-81   & ATTO550  & 1:100 & 300 \\
\hline
LY6G  & Rt      & 1A8       & Akoya        & 4350015      & Cy5      & 1:200 & 100 \\
\hline
CD71  & Rt      & R17217    & Invitrogen   & 17-0711-82   & AF488    & 1:200 & 300 \\
\hline
KI67  & Ms      & B56       & Akoya        & 4250019      & ATTO550  & 1:200 & 150 \\
\hline
SCA1  & Rt      & D7        & Invitrogen   & 14-5981-85   & Cy5      & 1:200 & 200 \\
\hline
CD25  & Rt      & PC61.5    & Invitrogen   & 14-0251-86   & AF488    & 1:200 & 300 \\
\hline
HELIOS& Ar Hms  & 22F6      & BioLegend    & 137202       & ATTO550  & 1:200 & 200 \\
\hline
K10   & Rb      & polyclonal& Invitrogen   & PA5-104456   & Cy5      & 1:400 & 80 \\
\hline
CD62L & Rt      & 95218     & R\&D         & MAB5761      & AF488    & 1:200 & 200 \\
\hline
CD5   & Rt      & 53-7.3    & Akoya        & 4250007      & ATTO550  & 1:200 & 250 \\
\hline
ERTR7 & Rt      & ER-TR7    & Bio-Rad      & MCA2402      & Cy5      & 1:400 & 80 \\
\hline
CD24  & Rt      & M1/69     & Akoya        & 4150014      & AF488    & 1:200 & 200 \\
\hline
CD31  & Rt      & MEC13.3   & Akoya        & 4250001      & ATTO550  & 1:200 & 200 \\
\hline
CD49F & Rt      & GoH3      & Akoya        & 4350007      & Cy5      & 1:200 & 200 \\
\hline
CD11B & Rt      & M1/70     & Akoya        & 4150015      & AF488    & 1:200 & 200 \\
\hline
CD4   & Rt      & RM4-5     & Akoya        & 4250016      & ATTO550  & 1:200 & 150 \\
\hline
LY51  & Rt      & 6C3       & BioLegend    & 108302       & Cy5      & 1:100 & 300 \\
\hline
PIGR  & Rb      & polyclonal& Invitrogen   & PA5-35340    & AF488    & 1:200 & 250 \\
\hline
MHCII & Rt      & M5/114.15.2 & Akoya      & 4250003      & ATTO550  & 1:400 & 150 \\
\hline
TCRB  & Ar Hms  & H57-597   & Akoya        & 4350006      & Cy5      & 1:200 & 150 \\
\hline
PDPN  & Sy Hms  & 8.1.1     & Invitrogen   & MA5-18054    & AF488    & 1:200 & 300 \\
\hline
CD206 & Rt      & C068C2    & BioLegend    & 141701       & ATTO550  & 1:400 & 250 \\
\hline
K8    & Rt      & TROMA-1   & EMD Millipore& MABT329      & Cy5      & 1:200 & 80 \\
\hline
K14   & Rb      & polyclonal& Invitrogen   & PA5-16722    & AF488    & 1:200 & 80 \\
\hline
CD44  & Rt      & IM7       & Akoya        & 4250002      & ATTO550  & 1:200 & 200 \\
\bottomrule
\end{longtable}

\begin{longtable}{L{0.7cm} L{2cm} L{4.5cm} L{5.8cm} L{1.6cm} L{1.4cm}}
\caption{List of PhenoCycler cell phenotypes with their defining marker thresholds, distance thresholds, and regional filters.}
\label{CODEXphenotypes} \\
\toprule
No. & Phenotype name & Corresponding cell type & Defining markers and their normalized expression thresholds & Distance thresholds [pixels] & Region filter \\
\midrule
\endfirsthead
\multicolumn{6}{c}{{\bfseries \tablename\ \thetable{} -- continued from previous page}} \\
\toprule
No. & Phenotype & Cell type & Markers and Expression Thresholds & Dist.\ [px] & Region \\
\midrule
\endhead
\midrule
\multicolumn{6}{r}{{Continued on next page}} \\
\endfoot
\bottomrule
\endlastfoot
1 & ICOS+ & T-cells expressing ICOS & ICOS \textgreater\ 0.25 & 7 & - \\
\hline
2 & T-reg & Regulatory T-cells & FOXP3 \textgreater\ 0.15 & 6 & - \\
\hline
3 & CD11b+ & Monocytes & CD11b \textgreater\ 0.15 & 8 & - \\
\hline
4 & T-gd (PD1+) & gd T-cells expressing PD1 & (TCRgd \textgreater\ 0.15) \& (PD1 \textgreater\ 0.25) & 6 & - \\
\hline
5 & T-gd (PD1-) & gd T-cells not expressing PD1 & (TCRgd \textgreater\ 0.15) \& (PD1 $\le$ 0.25) & 6 & - \\
\hline
6 & DN3 & DN T-cells & (CD44 $\le$ 0.2) \& (CD25 \textgreater\ 0.2) \& (CD5 $\le$ 0.2) \& (TCRb $\le$ 0.2) & 6 & Cortex \\
\hline
7 & ISP8 & Immature SP8 T-cells & (CD25 $\le$ 0.2) \& (CD8a \textgreater\ 0.3) \& (CD4 $\le$ 0.2) \& (CD5 $\le$ 0.2) \& (TCRb $\le$ 0.2) & - & Cortex \\
\hline
8 & SP8 (CD69+) & SP8 T-cells expressing CD69 & (CD8a \textgreater\ 0.3) \& (CD4 $\le$ 0.2) \& ((TCRb \textgreater\ 0.15) $\mid$ (CD5 \textgreater\ 0.15)) \& (CD69 \textgreater\ 0.25) & - & Medulla \\
\hline
9 & SP8 (CD69-) & SP8 T-cells not expressing CD69 & (CD8a \textgreater\ 0.3) \& (CD4 $\le$ 0.2) \& ((TCRb \textgreater\ 0.15) $\mid$ (CD5 \textgreater\ 0.15)) \& (CD69 $\le$ 0.25) & - & Medulla \\
\hline
10 & MP & Macrophages & CD206 \textgreater\ 0.075 & 8 & - \\
\hline
11 & BC & B cells & B220 \textgreater\ 0.1 & 4 for BC clusters, 6 for interspersed BC & - \\
\hline
12 & SP4 (Helios+) & SP4 T-cells expressing Helios & (CD8a $\le$ 0.2) \& (CD4 \textgreater\ 0.2) \& ((TCRb \textgreater\ 0.15) $\mid$ (CD5 \textgreater\ 0.15)) \& (Helios \textgreater\ 0.15) & - & Medulla \\
\hline
13 & pDC (PDCA1+) & Plasmacytoid dendritic cells expressing PDCA1 & (CD11c \textgreater\ 0.25) \& (PDCA1 \textgreater\ 0.25) & 11 & - \\
\hline
14 & cDC1 (XCR1+) & Conventional dendritic cells expressing XCR1 & (CD11c \textgreater\ 0.25) \& (XCR1 \textgreater\ 0.35) & 11 & - \\
\hline
15 & cDC2 (SIRPa+) & Conventional dendritic cells expressing SIRPa & (CD11c \textgreater\ 0.25) \& (PDCA1 \textgreater\ 0.25) & 11 & - \\
\hline
16 & DC (CD11c+) & Remaining dendritic cells expressing CD11c & CD11c \textgreater\ 0.25 & 11 & - \\
\hline
17 & SP4 (CD69+) & SP4 T-cells expressing CD69 & (CD8a $\le$ 0.2) \& (CD4 \textgreater\ 0.2) \& ((TCRb \textgreater\ 0.2) $\mid$ (CD5 \textgreater\ 0.15)) \& (CD69 \textgreater\ 0.25) & - & Medulla \\
\hline
18 & SP4 (CD69-) & SP4 T-cells not expressing CD69 & (CD8a $\le$ 0.2) \& (CD4 \textgreater\ 0.2) \& ((TCRb \textgreater\ 0.2) $\mid$ (CD5 \textgreater\ 0.15)) \& (CD69 $\le$ 0.25) & - & Medulla \\
\hline
19 & Postselection DP (CD69+) & DP T-cells that have undergone selection and express CD69 & (CD4 \textgreater\ 0.2) \& (CD8a \textgreater\ 0.2) \& ((CD5 \textgreater\ 0.3) $\mid$ (TCRb \textgreater\ 0.2)) \& (CD69 \textgreater\ 0.25) & - & Cortex \\
\hline
20 & Postselection DP (CD69-) & DP T-cells that have undergone selection and do not express CD69 & (CD4 \textgreater\ 0.2) \& (CD8a \textgreater\ 0.2) \& ((CD5 \textgreater\ 0.3) $\mid$ (TCRb \textgreater\ 0.3)) \& (CD69 $\le$ 0.25) & - & Cortex \\
\hline
21 & Preselection DP & DP T-cells that have not undergone selection & (CD4 \textgreater\ 0.05) \& (CD8a \textgreater\ 0.05) \& ((CD5 $\le$ 0.3) $\mid$ (TCRb $\le$ 0.3)) \& (CD69 $\le$ 0.3) & - & Cortex \\
\hline
22 & mTEC (Aire+) & mTEC expressing Aire & Aire \textgreater\ 0.09 & 9 & - \\
\hline
23 & mTEC (Ly6g+) & mTEC expressing Ly6g & Ly6g \textgreater\ 0.09 & 14 & - \\
\hline
24 & mTEC (PIGR+) & mTEC expressing PIGR & PIGR \textgreater\ 0.18 & 11 & Medulla \\
\hline
25 & mTEC (K10+) & mTEC expressing K10 & K10 \textgreater\ 0.1 & - & - \\
\hline
26 & mTEC (CD49f+) & mTEC expressing CD49f & (K14 \textgreater\ 0.1 $\mid$ K8 \textgreater\ 0.1) \& (CD49f \textgreater\ 0.3) \& (PDPN \textless\ 0.15) & - & Medulla \\
\hline
27 & mTEC hi (K14+, CD86+) & mTEC expressing K14, CD86, and MHCII & (K14 \textgreater\ 0.1) \& (MHCII \textgreater\ 0.35) \& (CD86 \textgreater\ 0.2) & - & - \\
\hline
28 & mTEC hi (K8+, CD86+) & mTEC expressing K8, CD86, and MHCII & (K8 \textgreater\ 0.1) \& (MHCII \textgreater\ 0.35) \& (CD86 \textgreater\ 0.2) & - & Medulla \\
\hline
29 & TEC (PDPN+) & TEC expressing PDPN & (K14 \textgreater\ 0.1 $\mid$ K8 \textgreater\ 0.1 $\mid$ Np63 \textgreater\ 0.1) \& (PDPN \textgreater\ 0.15) \& (ERTR7 \textless\ 0.15) & 5 & - \\
\hline
30 & mTEC hi (K14+, CD86-) & mTEC expressing K14, MHCII, but not CD86 & (K14 \textgreater\ 0.1) \& (MHCII \textgreater\ 0.35) \& (CD86 $\le$ 0.2) & - & - \\
\hline
31 & mTEC hi (K8+, CD86-) & mTEC expressing K8, MHCII, but not CD86 & (K8 \textgreater\ 0.1) \& (MHCII \textgreater\ 0.35) \& (CD86 $\le$ 0.2) & - & Medulla \\
\hline
32 & mTEC lo (K14+) & mTEC expressing K14 but not MHCII & (K14 \textgreater\ 0.1) \& (MHCII $\le$ 0.35) & - & - \\
\hline
33 & mTEC lo (K8+) & mTEC expressing K8 but not MHCII & (K8 \textgreater\ 0.1) \& (MHCII $\le$ 0.35) & - & Medulla \\
\hline
34 & cTEC hi (Ly51+) & cTEC expressing Ly51 and MHCII & (K8 \textgreater\ 0.1) \& (MHCII \textgreater\ 0.35) \& (Ly51 \textgreater\ 0.2) & - & Cortex \\
\hline
35 & cTEC lo (Ly51+) & cTEC expressing Ly51 but not MHCII & (K8 \textgreater\ 0.1) \& (MHCII $\le$ 0.35) \& (Ly51 \textgreater\ 0.2) & - & Cortex \\
\hline
36 & cTEC hi (Ly51-) & cTEC expressing MHCII but not Ly51 & (K8 \textgreater\ 0.1) \& (MHCII \textgreater\ 0.35) \& (Ly51 $\le$ 0.2) & - & Cortex \\
\hline
37 & cTEC lo (Ly51-) & cTEC expressing neither Ly51 nor MHCII & (K8 \textgreater\ 0.1) \& (MHCII $\le$ 0.35) \& (Ly51 $\le$ 0.2) & - & Cortex \\
\hline
38 & Endo & Endothelial cells & CD31 \textgreater\ 0.2 & - & - \\
\hline
39 & Fb (PDPN+, ERTR7-) & Fibroblasts expressing PDPN but not ERTR7 & (PDPN $\ge$ 0.15) \& (ERTR7 \textless\ 0.15) \& (Np63 \textless\ 0.1) & - & - \\
\hline
40 & Fb (PDPN-, ERTR7+) & Fibroblasts expressing ERTR7 but not PDPN & (PDPN \textless\ 0.15) \& (ERTR7 $\ge$ 0.15) \& (Np63 \textless\ 0.1) & - & - \\
\hline
41 & Fb (PDPN+, ERTR7+) & Fibroblasts expressing both PDPN and ERTR7 & (PDPN $\ge$ 0.15) \& (ERTR7 $\ge$ 0.15) \& (Np63 \textless\ 0.1) & - & - \\
\end{longtable}

\FloatBarrier

\bibliographystyle{unsrt}
\bibliography{bibliography}

\end{document}